\theoremstyle{plain}
\newtheorem{theorem}{Theorem}
\newtheorem{corollary}{Corollary}
\newtheorem{lemma}{Lemma}
\theoremstyle{definition}
\theoremstyle{remark}
\newtheorem{remark}{Remark}
\numberwithin{equation}{section}
\newcommand{\e}{\epsilon}
\newcommand{\s}{\sigma}
\newcommand{\R}{\mathbb R}
\newcommand{\N}{\mathbb N}
\newcommand{\C}{\mathbb C}
\newcommand{\Rn}{\mathbb R^n}
\newcommand{\rst}[1]{\ensuremath{{\mathbin\upharpoonright}%
\raise-.5ex\hbox{$#1$}}}
 \newcommand{\sgn}{\mathop{\mathrm{sgn}}}
\def\p{\partial}
\def \e {\epsilon}
\def \om {\Omega}
\def \s{\sigma}
\def \ll{\label}
\begin{document}

\title[Observations from measurable sets]{Observations
from measurable sets and applications}
\author{Luis Escauriaza}
\address[Luis Escauriaza]{Universidad del Pa{\'\i}s Vasco/Euskal Herriko Unibertsitatea\\Dpto. de Matem\'aticas\\Apto. 644, 48080 Bilbao, Spain.}
\email{luis.escauriaza@ehu.es}
\thanks{The first two authors are supported  by Ministerio de Ciencia e Innovaci\'on grant MTM2011-2405. The last author is supported by the National Natural Science Foundation of China under grant
 11171264.}
\author{Santiago Montaner}
\address[Santiago Montaner]{Universidad del Pa{\'\i}s Vasco/Euskal Herriko
Unibertsitatea\\Dpto. de Matem\'aticas\\Apto. 644, 48080 Bilbao, Spain.}
\email{santiago.montaner@ehu.es}
\author{Can Zhang}
\address[Can Zhang]{School of Mathematics and Statistics, Wuhan University, Wuhan, China}
\email{zhangcansx@163.com}
\keywords{observability, propagation of smallness, bang-bang property}
\subjclass{Primary: 35B37}
\begin{abstract}
We find new quantitative estimates on the space-time analyticity of solutions to linear parabolic equations with  time-independent coefficients and apply them to obtain observability inequalities for its solutions over measurable sets.
\end{abstract}
\maketitle
\section{Introduction}\label{S:1}

Mixing up ideas developed in \cite{W1}, \cite{AE} and \cite{PW1}, it was shown in \cite{AEWZ} that the heat equation over bounded domains $\Omega$ in $\Rn$ can be null controlled at all times $T>0$ with interior and bounded controls acting over space-time measurable sets $\mathcal D\subset\Omega\times (0,T)$ with positive Lebesgue measure, when $\Omega$ is a Lipchitz polyhedron or a $C^1$ domain in $\Rn$. \cite{AEWZ} also established the boundary null-controllability with bounded controls over measurable sets $\mathcal J\subset\partial\Omega\times (0,T)$ with positive surface measure.

In this work we explain the techniques necessary  to apply the same methods in \cite{AEWZ} in order to obtain the interior and boundary null controllability of  some higher order or non self-adjoint parabolic evolutions with time-independent analytic coefficients over analytic domains $\Omega$ of $\Rn$ and with bounded controls acting over measurable sets with positive measure. We also show the null-controllability with controls acting over  possibly  different measurable regions over each component of the Dirichlet data of higher order parabolic equations or over each component of the solution to second order parabolic systems; both at the interior and at the boundary. Finally, we show that the same methods imply the null-controllability of some not completely uncoupled parabolic systems with bounded interior controls acting over only one of the components of the system and on measurable regions.

We explain the technical details for parabolic higher order equations with constant coefficients and for second order systems with time independent analytic coefficients. We believe that this set of examples will make it clear to the experts that the combination of the methods in \cite{W1}, \cite{AE}, \cite{PW1} with others here imply  analog results to those in \cite{AEWZ} for  parabolic evolutions associated to possibly  non self-adjoint higher order elliptic equations or second order systems with time independent analytic coefficients over analytic domains: existence of bounded null-controls acting over measurable sets and the uniqueness and bang-bang property of certain optimal controls.

Throughout the work $0<T\le 1$ denotes a positive time, $\Omega\subset\Rn$ a is bounded domain with analytic boundary $\p\Omega$, $\nu$  is the  exterior unit normal vector to the boundary of $\Omega$ and $d\sigma$ denotes surface measure on $\partial\Omega$, $B_R(x_0)$ stands for the ball centered at $x_0$
and of radius $R$, $B_R=B_R(0)$. For measurable sets $\omega\subset\Rn$ and $\mathcal{D}\subset\Rn\times (0,T)$, $|\omega|$ and $|\mathcal{D}|$ stand for the Lebesgue measures of the sets; for  measurable sets $\Gamma\subset\p\Omega$ and $\mathcal{J}$ in $\partial\Omega\times (0,T)$, $|\Gamma|$ and $|\mathcal{J}|$ denote respectively their surface measures in $\p\Omega$ and $\partial\Omega\times \R$. $|\alpha|=\alpha_1+\dots+\alpha_\ell$, when $\alpha=\left(\alpha_1,\dots,\alpha_\ell\right)$ is a $\ell$-tuple in $\N^{\ell}$, $\ell\ge 1$.

To describe the analyticity of the boundary of $\Omega$ we assume that there is some $\delta>0$ such that for each $x_0$ in $\partial\Omega$ there is, after a translation and rotations, a new coordinate system (where $x_0=0$) and a real analytic function $\varphi : B_{\delta}'\subset \R^{n-1}\longrightarrow\R$ verifying
\begin{equation}\label{E:descripcionfrontera}
\begin{aligned}
\varphi(0'&)=0,\ |\partial_{x'}^{\alpha}\varphi(x')|\le |\alpha|! \,\delta^{-|\alpha|-1}\, ,\ \text{when}\ x'\in B_{\delta}',\ \alpha\in \N^{n-1},\\
&B_{\delta}\cap\Omega=B_\delta\cap\{(x',x_n): x'\in B_{\delta}',\  x_n>\varphi(x')\},\\
&B_{\delta}\cap\partial\Omega=B_\delta\cap\{(x',x_n): x'\in B_{\delta}',\ x_n=\varphi(x')\}.
\end{aligned}
\end{equation}

  The existence of the bounded null-controls acting over the measurable sets for the set of examples follows by standard duality arguments (cf. \cite{DRu} or \cite{Lions1}) from the following list of observability inequalities.
\begin{theorem}\label{6131}
Let $\mathcal{D}\subset\Omega\times(0,T)$ be a measurable set with positive measure and $m\ge 1$. Then, there is a constant $N=N(\Omega,T,m,\mathcal{D},\delta)$ such that the inequality
\begin{equation*}
\|u(T)\|_{L^2(\Omega)}\leq
N\int_{\mathcal{D}}|u(x,t)|\;dxdt
\end{equation*}
holds for all solutions $u$ to
\begin{equation}\label{E: 6081}
\begin{cases}
\p_t u+(-1)^m\Delta^m u=0,\ &\text{in}\ \Omega\times(0,T),\\
u=\nabla u=\dots=\nabla^{m-1}u=0,\ &\text{on}\ \p\Omega\times(0,T),\\
u(0)=u_0,\ &\text{in}\ \Omega,
\end{cases}
\end{equation}
with $u_0$ in $L^2(\Omega)$.
\end{theorem}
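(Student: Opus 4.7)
The plan is to transport the strategy of \cite{AEWZ}, which treats the second-order heat equation, to the higher-order evolution \eqref{E: 6081}. The approach splits into four ingredients: (i) a quantitative space-time analyticity estimate for $u(\cdot,t)$ in a neighborhood of $\overline{\Omega}$ depending only on $\|u_0\|_{L^2(\Omega)}$, $t$ and the boundary parameter $\delta$; (ii) a propagation of smallness from measurable sets of positive measure for real analytic functions, in the spirit of Vessella and Nadirashvili; (iii) a Lebesgue density reduction converting observation on $\mathcal{D}$ into observation on slices $E_t\times\{t\}$ whose measure is bounded quantitatively from below on a set of times of positive measure; (iv) the Phung--Wang telescoping argument from \cite{PW1}, which promotes a local interpolation inequality into the global observability estimate.

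First I would establish, for $t\in (0,T]$ and every multi-index $\alpha\in\N^n$, pointwise bounds of the form
\begin{equation*}
|\partial_x^\alpha u(x,t)|\le N^{|\alpha|+1}\,\frac{|\alpha|!}{\rho^{|\alpha|}}\,t^{-(|\alpha|+n/2)/(2m)}\|u_0\|_{L^2(\Omega)}
\end{equation*}
on a fixed neighborhood of $\overline{\Omega}$, with $\rho=\rho(\Omega,\delta,m)>0$. Interior analyticity is routine for the constant-coefficient operator $(-\Delta)^m$ by applying Cauchy estimates to the semigroup $e^{-t(-\Delta)^m}u_0$. The analyticity up to $\partial\Omega$ is obtained by flattening the boundary via the analytic chart \eqref{E:descripcionfrontera} and running a Morrey--Nirenberg iteration on tangential and normal derivatives, using the equation to trade $\Delta^m u$ for $\partial_t u$ and exploiting the $m$ vanishing Dirichlet traces to reflect analytically across $\partial\Omega$. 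Combined with a standard $L^2\to L^\infty$ smoothing for the semigroup, these estimates place $u(\cdot,t)$ in a quantitatively controlled Holmgren class for every $t>0$.

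Once this analyticity is in hand, the propagation of smallness for analytic functions from sets of positive measure yields, for every $t_0\in(0,T)$, every ball $B_r(x_0)\subset\Omega$ and every measurable $E\subset B_r(x_0)$ with $|E|>0$, a two-ball interpolation inequality
\begin{equation*}
\|u(t_0)\|_{L^2(B_r(x_0))}\le C\,\|u(\cdot,t_0)\|_{L^\infty(E)}^{\theta}\,\|u(t_0)\|_{L^2(\Omega)}^{1-\theta},
\end{equation*}
with $\theta=\theta(|E|/|B_r|,\Omega,\delta,m)\in(0,1)$. Applying the Lebesgue density theorem to $\mathcal{D}$ exactly as in \cite{AEWZ} produces a cylinder $B_r(x_0)\times (t_1,t_2)\subset\Omega\times (0,T)$ along whose time fibres $\mathcal{D}_t\cap B_r(x_0)$ captures a fixed fraction of $B_r(x_0)$ on a time-set of positive measure; integrating the two-ball inequality over such $t$ and invoking the dissipation $\|u(t_2)\|_{L^2(\Omega)}\le\|u(t_1)\|_{L^2(\Omega)}$ produces an interpolation estimate
\begin{equation*}
\|u(t_2)\|_{L^2(\Omega)}\le \Bigl(N\int_{\mathcal{D}\cap(\Omega\times(t_1,t_2))}|u(x,t)|\,dxdt\Bigr)^{\theta}\|u(t_1)\|_{L^2(\Omega)}^{1-\theta}
\end{equation*}
on every short interval $[t_1,t_2]\subset (0,T]$. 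The Phung--Wang telescoping lemma applied along a geometric sequence $t_k\nearrow T$ then sums these local estimates into the desired bound $\|u(T)\|_{L^2(\Omega)}\le N\int_{\mathcal{D}}|u|\,dxdt$.

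The main obstacle I anticipate is step (i): producing a global quantitative space-time analyticity estimate, with constants depending only on $\Omega,\delta,m$ and $\|u_0\|_{L^2(\Omega)}$, that is valid \emph{up to} the analytic boundary $\partial\Omega$ for the $2m$-th order operator with $m$ vanishing Dirichlet traces. One has to separate tangential derivatives (which behave well under the reflection) from normal ones (which are governed by the equation and the boundary conditions), and keep factorial growth under control across the iteration; this is the technical core of the paper and the place where the higher-order nature of the operator genuinely departs from the treatment in \cite{AEWZ}. Once it is in place, the remainder of the argument is a rather mechanical adaptation of the second-order machinery of \cite{W1}, \cite{AE}, \cite{AEWZ}, \cite{PW1}.
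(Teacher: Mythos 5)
Your outline (quantitative analyticity $\to$ propagation of smallness on measurable sets $\to$ Lebesgue density $\to$ telescoping) is the paper's scheme, and steps (ii)--(iv) are indeed handled essentially as you describe. But there is a genuine gap at step (i), and it is exactly the one the paper singles out as the core difficulty.

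The estimate you propose,
\begin{equation*}
|\partial_x^\alpha u(x,t)|\le N^{|\alpha|+1}\,\frac{|\alpha|!}{\rho^{|\alpha|}}\,t^{-(|\alpha|+n/2)/(2m)}\|u_0\|_{L^2(\Omega)},
\end{equation*}
is what the paper records as \eqref{E: kimalito} and explicitly rejects as insufficient. Rewriting it as $M\,|\alpha|!\,\tilde\rho^{-|\alpha|}$ for use in Lemma~\ref{propagation}, the effective radius of analyticity is $\tilde\rho\sim t^{1/(2m)}$, which shrinks to zero with $t$. In the telescoping step one applies the propagation-of-smallness lemma at times $t\in(\tau_k,l_k)$ treating $u(l_{k+1})$ as the initial state, so the relevant elapsed time is $t-l_{k+1}\le l_k-l_{k+1}\to 0$ as $k\to\infty$. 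With your bound the parameter $\tilde\rho$ fed into Lemma~\ref{propagation} degenerates like $(l_k-l_{k+1})^{1/(2m)}$, hence the exponent $\theta$ degenerates with $k$, and the telescoping series does not close. Flagging step (i) as the technical core and then writing down the weak estimate is precisely the trap; the constant $N$ in front cannot rescue a radius that collapses along the iteration.

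What the paper actually proves, Lemma~\ref{L: 6086}, is
\begin{equation*}
|\partial_x^\alpha\partial_t^p u(x,t)| \le e^{1/\rho t^{1/(2m-1)}}\,\rho^{-|\alpha|-p}\,|\alpha|!\,p!\,t^{-p}\,\|u_0\|_{L^2(\Omega)},
\end{equation*}
with $\rho$ \emph{independent of $t$}: the price is the front factor $e^{1/\rho t^{1/(2m-1)}}$, and that factor is exactly what the telescoping weights $e^{-(N+1-\theta)/(l_k-l_{k+1})^{1/(2m-1)}}$ are built to absorb. This is not obtained by running a Morrey--Nirenberg iteration directly on the parabolic operator (which would only reproduce \eqref{E: kimalito}); instead the paper uses the Landis--Oleinik reflection: add an auxiliary variable $y$, replace $e^{-tw_j^{2m}}e_j(x)$ by $e^{-tw_j^{2m}}e_j(x)X_j(y)$ with the profiles $X_j$ of \eqref{614c2}, and observe that the resulting function solves the \emph{elliptic} boundary value problem $\partial_y^{2m}u+\Delta_x^m u=0$ with the same $m$ vanishing Dirichlet traces on the lateral boundary. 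Analytic elliptic regularity up to the analytic boundary then yields a $t$-uniform radius $\rho$, and the $e^{Nt^{-1/(2m-1)}}$ factor emerges from $\max_{x\ge0}e^{-tx^{2m}+2x|y|}$. Without this upgrade to a $t$-independent radius, the remainder of your argument cannot be carried out.
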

\begin{remark}\label{102c1}
The constant $N$ in Theorem \ref{6131} is of the form $e^{N/T^{1/(2m-1)}}$with $N=N(\Omega,|\omega|,\delta)$, when $\mathcal{D}=\omega\times(0,T)$, $0<T\leq1$ and $\omega\subset\Omega$ is a measurable set. The later is consistent  with the case of the heat equation \cite{FernandezZuazua}.
\end{remark}
The second and third are two boundary observability inequalities over measurable sets for the higher order evolution \eqref{E: 6081}. The first over a general measurable set and the second over two possibly different measurable sets with the same projection over the time $t$-axis. To simplify, we give the details only  for the evolution associated to  $\Delta^2$.
\begin{theorem}\label{6106}
Assume that $\mathcal{J}\subset\p\Omega\times(0,T)$ is a measurable set with positive surface measure in $\p\Omega\times(0,T)$. Then, there is $N=N(\Omega,\mathcal{J},T,\delta)$ such that the inequality
\begin{equation}\label{E: firatobservanbi}
\|u(T)\|_{L^2(\Omega)}
\leq N\int_{\mathcal{J}}|\tfrac{\p\Delta u}{\p\nu}(x,t)|+|\Delta u(x,t)|\;d\sigma dt,
\end{equation}
holds for all  solutions $u$ to
\begin{equation}\label{6084}
\begin{cases}
\p_t u+\Delta^2 u=0,\ &\text{in}\ \Omega\times(0,T),\\
u=\frac{\partial u}{\partial\nu}=0,\ &\text{on}\ \p\Omega\times(0,T),\\
u(0)=u_0,\ &\text{in}\ \Omega,
\end{cases}
\end{equation}
with  $u_0$ in $L^2(\Omega)$.
\end{theorem}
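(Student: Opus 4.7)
The strategy follows the Apraiz--Escauriaza--Wang--Zhang scheme of \cite{AEWZ}, now adapted to boundary observation for the biharmonic heat flow. The plan is to combine three ingredients: (i) quantitative space-time analyticity of $u$ up to $\p\Omega$ for positive times; (ii) a local quantitative boundary three-cylinder inequality expressing interior $L^2$-control in terms of the Cauchy data $\Delta u$ and $\p_\nu\Delta u$ on a small patch of $\p\Omega\times(0,T)$; and (iii) a Lebesgue density point reduction passing from a full curvilinear patch to the measurable set $\mathcal J$. These local ingredients are then coupled with the $L^2$-dissipation $\|u(T)\|_{L^2(\Omega)}\le\|u(t)\|_{L^2(\Omega)}$ via the telescoping/Young iteration of \cite{AEWZ} to yield \eqref{E: firatobservanbi}.

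First I would prove that for any $t_1>0$, the solution $u$ of \eqref{6084} is real analytic on $\overline\Omega\times[t_1,T]$, with quantitative bounds on the analytic radii depending only on $\|u(t_1)\|_{L^2(\Omega)}$, $\delta$ and $t_1$. Near a boundary point $x_0$ one flattens $\p\Omega$ via the analytic function $\varphi$ of \eqref{E:descripcionfrontera}, arrives at a fourth order parabolic equation with analytic coefficients on a half-space with the analytic coefficient estimate provided by \eqref{E:descripcionfrontera}, and runs a standard Cauchy--Kovalevskaya-type iteration. Second, given this analyticity, I would derive a three-cylinder inequality at a boundary point of the form
\begin{equation*}
\|u\|_{L^2(Q_{r/2}^+\times I_{r/2})}\le N\Bigl(\int_{\Gamma_r\times I_r}|\p_\nu\Delta u|+|\Delta u|\,d\sigma dt\Bigr)^{\theta}\|u\|_{L^2(\Omega\times(0,T))}^{1-\theta},
\end{equation*}
where $\Gamma_r=\p\Omega\cap B_r(x_0)$, $Q_r^+=\Omega\cap B_r(x_0)$, $I_r=(t_0-r^4,t_0)$, and $\theta\in(0,1)$ depends only on $(\Omega,\delta)$. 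The crucial observation is that $u$ and $\p_\nu u$ vanish on $\p\Omega$, so extending $u$ by zero through $\p\Omega$ near $x_0$ produces a distributional solution whose source is supported on $\Gamma_r$ with densities built from exactly $\Delta u$ and $\p_\nu\Delta u$. A frequency function or Carleman argument, combined with the analyticity from the first step, then quantifies the propagation of smallness from $\Gamma_r\times I_r$ into the adjacent interior cylinder.

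Third, choosing a Lebesgue density point $(x_0,t_0)\in\mathcal J$ so that $|\mathcal J\cap(\Gamma_r\times I_r)|/|\Gamma_r\times I_r|\to 1$ as $r\to 0^+$, a H\"older interpolation and bootstrap in the density parameter, as in \cite{AEWZ}, replaces the integral over $\Gamma_r\times I_r$ in the three-cylinder estimate by the integral over $\mathcal J$, with a new exponent $\theta'\in(0,\theta)$. The telescoping/Young argument of \cite{AEWZ}, together with the $L^2$-dissipation of the semigroup associated with \eqref{6084}, then absorbs the global factor $\|u\|_{L^2(\Omega\times(0,T))}^{1-\theta'}$ and delivers \eqref{E: firatobservanbi} with the stated dependence of $N$. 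The main obstacle is the second step: pushing quantitative propagation of smallness up to $\p\Omega$ when the observed Cauchy data reduce to $(\Delta u,\p_\nu\Delta u)$, so that the vanishing of the remaining traces $u=\p_\nu u=0$ must be exploited explicitly and the exponent $\theta$ must be controlled uniformly in the base point $x_0\in\p\Omega$ with a dependence on $\delta$ alone.
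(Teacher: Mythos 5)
Your step (ii) is where a genuine gap appears, and it is precisely the gap the paper is at pains to avoid. You propose to obtain a \emph{local} boundary three-cylinder inequality for the biharmonic heat flow, controlling $\|u\|_{L^2}$ in an interior cylinder by the traces $\Delta u,\ \partial_\nu\Delta u$ over a boundary patch, via a zero-extension plus a frequency function or Carleman argument. For the second-order heat equation this is a known route, but for the fourth-order parabolic operator no such Carleman or frequency-function estimate adapted to the Cauchy data $(\Delta u,\partial_\nu\Delta u)$ is available; the introduction of the paper explicitly records that observability for \eqref{E: 6081} with $m\ge 2$ ``have not been proved with Carleman methods'' because of the difficulties in checking positivity of the relevant commutators. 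Writing ``a frequency function or Carleman argument, combined with analyticity, then quantifies the propagation of smallness'' therefore reasserts the hard step rather than proving it, and the analyticity of $u$ does not by itself yield such a local three-cylinder inequality with control solely through boundary Cauchy data.

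The paper takes a different and more indirect route that bypasses any local unique-continuation analysis near $\partial\Omega$. Since $\partial\Omega$ is analytic, one enlarges $\Omega$ to an analytic domain $\Omega_\rho$; the already-proved \emph{interior} observability (Theorem~\ref{6131}) on $\Omega_\rho$ with control region $\Omega_\rho\setminus\overline\Omega$ gives, by duality and the standard extension argument \cite{FursikovOImanuvilov}, null controllability of \eqref{6084} from both boundary traces $u=g_1,\ \partial_\nu u=g_2$ on the whole $\partial\Omega\times(0,T)$. Dualizing once more yields the \emph{global} boundary observability in Lemma~\ref{YUANYUAN1}, with observation $\|\partial_\nu\Delta u\|_{L^2(\partial\Omega\times(\epsilon_1 T,\epsilon_2 T))}+\|\Delta u\|_{L^2(\partial\Omega\times(\epsilon_1 T,\epsilon_2 T))}$ and cost $e^{N/T^{1/3}}$. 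Only then is this global estimate localized to the measurable set: Lemma~\ref{L: 6086} furnishes the sharp quantitative analyticity, Lemma~\ref{propagation} (on the analytic hypersurface $\partial\Omega$) and \cite[Lemma 13]{AEWZ} (in time) propagate smallness from $\partial\Omega\times(\tau,\tilde\tau)$ to $\Gamma_1\times E$, and Fubini plus the telescoping argument in time along a Lebesgue point of $E$ deliver \eqref{E: firatobservanbi}. Note also that your step (i) as sketched (Cauchy--Kovalevskaya iteration after flattening) would at best give the weaker bound \eqref{E: kimalito}, with analytic radius shrinking like $t^{1/4}$; the telescoping series requires the $t$-independent spatial radius of Lemma~\ref{L: 6086}, which the paper obtains via the spectral/Landis--Oleinik device, not by direct iteration.
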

\begin{remark}
When $\mathcal{J}=\Gamma\times(0,T)$, $0<T\leq1$ and $\Gamma\subset\p\Omega$, the constant $N$ in Theorem \ref{6106} is of the form $e^{N/T^{1/3}}$ with $N=N(\Omega,|\Gamma|,\delta)$.
\end{remark}
\begin{theorem}\label{614g3}
Assume that $E\subset(0,T)$ is a measurable set with positive measure and that $\Gamma_i\subset \p\Omega$, $i=1,2$, are measurable sets with positive surface measure. Then, there is $N=N(\Omega,|\Gamma_1|,|\Gamma_2|,E,\delta)$ such that the inequality
\begin{equation*}
\|u(T)\|_{L^2(\Omega)}
\leq N\int_{E}\|\tfrac{\p\Delta u}{\p\nu}(t)\|_{L^1(\Gamma_1)}+\|\Delta u(t)\|_{L^1(\Gamma_2)}
\;dt,
\end{equation*}
holds for all  solutions $u$ to \eqref{6084}.
\end{theorem}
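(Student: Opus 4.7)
The plan is to reduce Theorem \ref{614g3} to a single-time interpolation inequality combined with a telescoping argument in time, following the blueprint of \cite{AEWZ} and of the preceding Theorems \ref{6131} and \ref{6106}. The only genuinely new ingredient is a one-time propagation-of-smallness inequality that handles two boundary observation sets and two different orders of normal derivatives.

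\textit{Step 1 (single-time interpolation).} The heart of the argument is to establish the existence of $\theta=\theta(\Omega,|\Gamma_1|,|\Gamma_2|,\delta)\in(0,1)$ and $N$ such that, for all $0<s<t\le T$ and every solution $u$ to \eqref{6084},
\begin{equation*}
\|u(t)\|_{L^2(\Omega)}\le e^{N/(t-s)^{1/3}}\,\Phi(t)^{\theta}\,\|u(s)\|_{L^2(\Omega)}^{1-\theta},
\end{equation*}
where $\Phi(t):=\|\tfrac{\p\Delta u}{\p\nu}(t)\|_{L^1(\Gamma_1)}+\|\Delta u(t)\|_{L^1(\Gamma_2)}$. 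The vanishing Cauchy data $u=\p_\nu u=0$ on all of $\p\Omega$, combined with the space-time analyticity of $u$ up to the boundary (the very estimate underlying Theorem \ref{6131}), implies that $\Delta u|_{\Gamma_2}$ and $\p_\nu\Delta u|_{\Gamma_1}$ jointly determine $u(\cdot,t)$ quantitatively. I would obtain this by localising a Carleman estimate for $\Delta^2$ near a point of density of each $\Gamma_i$ inside $\p\Omega$, propagating the resulting smallness first to a boundary strip, and then, via the interior three-ball propagation already used in the proofs of Theorems \ref{6131}--\ref{6106}, to all of $\Omega$. The exponent $1/3$ in $(t-s)^{-1/3}$ is consistent with the fourth-order parabolic scaling of $\p_t+\Delta^2$ noted in Remark \ref{102c1}.

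\textit{Step 2 (telescoping over $E$).} Let $\ell$ be a Lebesgue density point of $E$ and choose a decreasing sequence $\ell_k\downarrow\ell$ (e.g.\ $\ell_k-\ell=2^{-k}$) so that $E_k:=E\cap[\ell_{k+1},\ell_k]$ has measure $\ge\tfrac12(\ell_k-\ell_{k+1})$ for all $k$ large enough. Applying Step 1 at times $t\in E_k$ with $s=\ell_{k+2}$, averaging in $t$ over $E_k$, and using the monotonicity of $r\mapsto\|u(r)\|_{L^2(\Omega)}$ gives
\begin{equation*}
\|u(\ell_k)\|_{L^2(\Omega)}\le e^{N\,2^{k/3}}\left(\int_{E_k}\Phi(t)\,dt\right)^{\!\theta}\|u(\ell_{k+2})\|_{L^2(\Omega)}^{1-\theta}.
\end{equation*}
A Young inequality with a parameter $\varepsilon_k$ chosen so that the blow-up $e^{N\,2^{k/3}}$ is compensated by a geometric factor (possible because $\sum_k 2^{-k/3}$ converges and $\theta>0$) absorbs $\|u(\ell_{k+2})\|_{L^2}$ into the next step of the iteration. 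Telescoping over $k\ge 0$ then yields $\|u(\ell_0)\|_{L^2(\Omega)}\le N\int_E\Phi(t)\,dt$, and the desired inequality follows from $\|u(T)\|_{L^2(\Omega)}\le\|u(\ell_0)\|_{L^2(\Omega)}$.

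The main obstacle is Step 1: on its own piece, each observation $\Delta u|_{\Gamma_2}$ or $\p_\nu\Delta u|_{\Gamma_1}$ provides only partial Cauchy data for $\Delta^2$, and only together with $u=\p_\nu u=0$ on all of $\p\Omega$ do they determine $u$. Turning this qualitative fact into a quantitative single-time interpolation requires a careful choice of Carleman weights near each $\Gamma_i$ that respects the different orders of the observations and glues smoothly in the interior. Once Step 1 is in hand, the telescoping in Step 2 is a routine adaptation of the arguments already used in \cite{AEWZ} and in the proofs of Theorems \ref{6131} and \ref{6106}.
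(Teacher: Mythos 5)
The heart of your proposal, Step~1, is a route the paper explicitly disavows: the authors state that Carleman estimates for $\Delta^2$ (and for the other higher-order/system evolutions in the paper) are not available, even when the observation regions are open, precisely because of the difficulty of verifying positivity of the commutators. So ``localising a Carleman estimate for $\Delta^2$ near a point of density of each $\Gamma_i$'' is not a step you can appeal to here; it is the obstacle the whole paper is designed to circumvent.

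Moreover, the single-time interpolation you propose,
\[
\|u(t)\|_{L^2(\Omega)}\le e^{N/(t-s)^{1/3}}\,\Phi(t)^{\theta}\,\|u(s)\|_{L^2(\Omega)}^{1-\theta},
\]
is strictly stronger than what the paper establishes or needs. The paper's Lemma~\ref{6089} is a \emph{time-integrated} two-point interpolation: $\|u(t_2)\|_{L^2(\Omega)}$ is controlled by
$\big(e^{N/(t_2-t_1)^{1/3}}\int_{t_1}^{t_2}\chi_E(t)\,\Phi(t)\,dt\big)^{\theta}\|u(t_1)\|_{L^2(\Omega)}^{1-\theta}$.
The derivation of that lemma proceeds by a completely different chain of reasoning: (i)~Theorem~\ref{6131} (interior observability over measurable sets) applied on the enlarged analytic domain $\Omega_\rho$, together with the standard extension and duality (HUM) arguments, gives the $L^2(\partial\Omega\times(\epsilon_1T,\epsilon_2T))$ boundary observability of Lemma~\ref{YUANYUAN1}; (ii)~Lemma~\ref{L: 6086} (quantitative space-time analyticity) converts the $L^2$-trace into an $L^1$-trace by interpolating against the sup-norm of $\Delta u$ and $\partial_\nu\Delta u$; (iii)~the one-dimensional propagation of smallness from \cite[Lemma~13]{AEWZ} in the time variable shrinks the time interval to $E\cap(\tilde t_1,\tilde t_2)$; and (iv)~the boundary analogue of Lemma~\ref{propagation} shrinks $\partial\Omega$ to $\Gamma_1$ and to $\Gamma_2$. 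None of these uses a Carleman inequality for $\Delta^2$, and steps (i) and (iii) are intrinsically time-integrated, which is why you never get a pointwise-in-time $\Phi(t)$ on the right.

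Your Step~2 also has a quantitative flaw. Taking $\ell_k-\ell=2^{-k}$ fixes the ratio $q=(\ell_{k+1}-\ell_{k+2})/(\ell_k-\ell_{k+1})$ at $1/2$. For the telescoping scheme to absorb the exponential weight $e^{N/(\ell_k-\ell_{k+1})^{1/3}}$, the paper must choose $q=\big(\tfrac{N+1-\theta}{N+1}\big)^{3}$, a value that is tied to (and typically very close to) $1$ because $N$ is large and $\theta$ is small. With $q=1/2$ the Young-inequality parameters $\varepsilon_k$ that cancel the $\|u(\ell_{k+2})\|$ term do not leave a summable series; the geometric factor you mention cannot simultaneously dominate $e^{N\,2^{k/3}}$ and keep $\prod\varepsilon_j$ finite once $\theta<1$. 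This is exactly why the paper takes the adapted decreasing sequence furnished by \cite[Lemma~2]{AEWZ} rather than a prescribed dyadic one. In short, both steps need to be reworked along the lines actually used in Section~\ref{S:3} of the paper.
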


\begin{remark}\label{R:44}
We do not know if the sets $\Gamma_1\times E$ and $\Gamma_2\times E$ can be replaced by general measurable sets $\mathcal J_i\subset\partial\Omega\times (0,T)$, $i=1,2$.
\end{remark}
Now we consider the evolutions associated with strongly coupled second order {\it time independent} parabolic systems with a possible non self-adjoint structure, as the second order system
\begin{equation}\label{anas}
\begin{cases}
\partial_t\mathbf u-\mathbf L\mathbf{u}=0,\ &\text{in}\ \Omega\times(0,T),\\
\mathbf{u}=0,\ &\text{on}\ \partial\Omega\times(0,T),\\
\mathbf{u}(0)=\mathbf u_0,\ &\text{in}\ \Omega,
\end{cases}\quad\quad \text{with}\ \mathbf L=(L^1,\dots\mathbf, L^\ell),
\end{equation}
with
\begin{equation*}
L^\xi\mathbf{u}=\partial_{x_i}(a_{ij}^{\xi\eta}(x)\partial_{x_j}
u^\eta)+b_{j}^{\xi\eta}(x)\partial_{x_j}u^\eta+c^{\xi\eta}(x)u^{\eta},
\;\;\xi=1,\dots,\ell,
\end{equation*}
and $\mathbf u_0$ in $L^2(\Omega)^\ell$. Here, $\mathbf{u}$ denotes the vector-valued function $(u^1,\dots,u^{\ell})$ and  the summation convention of repeated indices is understood. We assume that $a_{ij}^{\xi\eta}$, $b_{j}^{\xi\eta}$ and $c^{\xi\eta}$ are analytic functions over $\overline\Omega$, i.e., there is $\delta>0$ such that
\begin{equation}\label{E: condicion3}
|\p_x^\gamma a_{ij}^{\xi\eta}(x)|+ |\p_x^\gamma b_{j}^{\xi\eta}(x)|+ |\p_x^\gamma c^{\xi\eta}(x)|\leq \delta^{-|\gamma|-1}|\gamma|!,\ \text{for all}\ \gamma\in\mathbb{N}^n\ \text{and}\ x\in\overline\Omega,
\end{equation}
 and only requires  that the higher order terms of the system \eqref{anas} have a self-adjoint structure; i.e.
 \begin{equation}\label{E: condicon5}
 a_{ij}^{\xi\eta}(x)=a_{ji}^{\eta\xi}(x),\ \text{for all}\ x\in\overline\Omega,\ \xi,\eta=1,\dots,\ell,\ i,j=1,\dots,n,
 \end{equation}
 together with the strong ellipticity condition
\begin{equation}\label{E: condici�n4}
\sum_{\xi,\eta,i,j}a_{ij}^{\xi\eta}(x)\zeta_{i}^\xi
\zeta_j^\eta\geq \delta\sum_{i,\xi}|\zeta_i^\xi|^2,\ \text{for all}\ \mathbf{\zeta}
=(\zeta_i^\xi)\ \text{in}\ \mathbb{R}^{n\ell}\ \text{and}\ x\in\overline\Omega.
\end{equation}
The results described below also hold when the higher order coefficients of the system verify \eqref{E: condicon5} and the weaker Legendre-Hadamard condition \cite[p. 76]{Giaquinta},
\begin{equation}\label{E:Hadamard2}
\sum_{i, j, \xi,\eta}a^{\xi\eta}_{ij}(x)\varsigma_i\varsigma_j\vartheta^\xi\vartheta^{\eta}\ge\delta |\varsigma|^2 |\vartheta|^2,\ \text{when}\ \varsigma\in \R^{n}, \vartheta\in\R^\ell,\,  x\in\Rn,
\end{equation}
in place of \eqref{E: condici�n4}.
Recall that the Lam\'e system of elasticity
\begin{equation*}\label{E:segundosistema}
\nabla\cdot\left(\mu(x)\left(\nabla \mathbf u+\nabla \mathbf u^t\right)\right) +\nabla\left(\lambda(x)\nabla\cdot \mathbf u\right),
\end{equation*}
with $\mu\ge\delta$, $\mu+\lambda \ge 0$ in $\Rn$, $\ell=n$ and
\begin{equation*}
a^{\xi\eta}_{ij}(x)=\mu(x)(\delta_{\xi\eta}\delta_{ij}+\delta_{i\eta}\delta_{j\xi})+\lambda(x)\delta_{j\eta}\delta_{\xi i},
\end{equation*}
are examples of systems verifying \eqref{E:Hadamard2}.

The observability inequalities related to parabolic second order systems are as follows. The first is an interior observability inequality with possibly different measurable interior observation regions for each component of the system but with the same projection over the time $t$-axis.

\begin{theorem}\label{nball}
Let $E\subset(0,T)$ be a measurable, $|E|>0$ and $\omega_\eta\subset\Omega$, $\eta=1,\dots,\ell$, be  measurable with $|\omega_\eta|\geq \omega_0$, $\eta=1,\dots,\ell$, for some $\omega_0>0$. Then, there is $N=N(\Omega,T,E,\omega_0,\delta)$ such that the inequality
\begin{equation*}
\|\mathbf{u}(T)\|_{L^2(\Omega)^\ell}\leq
N\int_{E}\sum_{\eta=1}^{\ell}\|u^\eta(t)
\|_{L^1(\omega_\eta)}\;dt
\end{equation*}
holds for all solutions $\mathbf{u}$ to \eqref{anas}.
\end{theorem}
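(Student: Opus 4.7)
My plan follows the three-step framework developed in \cite{AEWZ} for the heat equation, adapted to the non-self-adjoint system \eqref{anas} and to the componentwise observation regions.

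\textbf{Space-time analyticity.} I would invoke the quantitative analyticity estimates for \eqref{anas} established earlier in the paper. Since the coefficients satisfy \eqref{E: condicion3} and $\partial\Omega$ is analytic, the solutions satisfy Gevrey bounds
\[
|\partial_x^\alpha u^\eta(x,t)| \leq \rho^{-|\alpha|-1}\, |\alpha|!\, \|\mathbf{u}(t/2)\|_{L^2(\Omega)^\ell},\qquad x\in\overline{\Omega},\ t\in(0,T],\ \eta=1,\dots,\ell,
\]
with a Gevrey scale $\rho=\rho(t,\Omega,\delta)>0$ common to all components.

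\textbf{Componentwise propagation of smallness.} Applied to each $u^\eta(\cdot,t)$ on $\omega_\eta$, the AEWZ propagation of smallness for real analytic functions on measurable sets gives
\[
\|u^\eta(t)\|_{L^2(\Omega)} \leq C\, \|u^\eta(t)\|_{L^1(\omega_\eta)}^{\mu}\, \|\mathbf{u}(t/2)\|_{L^2(\Omega)^\ell}^{1-\mu},
\]
with $\mu\in(0,1)$ uniform in $\eta$ because $|\omega_\eta|\ge\omega_0$ and the Gevrey scale is common. Squaring, summing in $\eta$, and using Young's inequality yields a single vector-valued interpolation
\[
\|\mathbf{u}(t)\|_{L^2(\Omega)^\ell} \leq C\,\Bigl(\sum_{\eta=1}^\ell \|u^\eta(t)\|_{L^1(\omega_\eta)}\Bigr)^{\mu} \|\mathbf{u}(t/2)\|_{L^2(\Omega)^\ell}^{1-\mu}.
\]

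\textbf{Telescoping at a density point.} The forward energy estimate $\|\mathbf{u}(s)\|_{L^2(\Omega)^\ell}\le e^{C(s-r)}\|\mathbf{u}(r)\|_{L^2(\Omega)^\ell}$ for $0\le r\le s\le T$ (valid for the non-self-adjoint system by Gronwall) allows us to replace $\|\mathbf{u}(t/2)\|$ by $\|\mathbf{u}(r)\|$ for nearby $r<t$ at the cost of a benign multiplicative constant. Choosing a Lebesgue density point $\ell_0\in(0,T)$ of $E$ and a geometric sequence of nested time intervals shrinking to $\ell_0$, the AEWZ telescoping lemma converts the pointwise interpolation into the integral bound
\[
\|\mathbf{u}(\ell_0)\|_{L^2(\Omega)^\ell} \leq N\int_{E}\sum_{\eta=1}^{\ell}\|u^\eta(t)\|_{L^1(\omega_\eta)}\,dt,
\]
and a final application of the forward energy estimate replaces $\|\mathbf{u}(\ell_0)\|$ by $\|\mathbf{u}(T)\|$.

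\textbf{Main obstacle.} The delicate step is the second one: in a coupled system individual components can vanish at a given time even when $\mathbf{u}$ does not, so a naive scalar propagation of smallness applied to a single $u^\eta$ cannot directly recover $\|\mathbf{u}\|$. The resolution is that the analyticity lemma supplies a Gevrey scale which does \emph{not} depend on $\eta$, so the AEWZ propagation-of-smallness exponent $\mu=\mu(\Omega,\omega_0,\rho)$ is uniform across components, and the summation in $\eta$ recovers the full vector norm on the left. Once this uniform interpolation is in hand, the telescoping at the density point is the same real-variable argument as in the scalar heat-equation case of \cite{AEWZ}.
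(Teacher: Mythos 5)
Your proposal is correct and follows essentially the same route as the paper's proof: Lemma~\ref{L: 6086} for the quantitative space analyticity, Lemma~\ref{propagation} applied component by component on $\omega_\eta$ (with a common exponent because the Gevrey scale does not depend on $\eta$ and $|\omega_\eta|\ge\omega_0$, which is exactly how the paper obtains \eqref{difb} by taking $N=\max N_\eta$ and $\theta=\min\theta_\eta$), followed by the telescoping series at a Lebesgue point of $E$ via \cite[Lemma 2]{AEWZ}. Your observation about the uniformity of the propagation-of-smallness exponent being the key to summing over $\eta$ is precisely the point the paper exploits, and your care with the forward energy estimate for the non-self-adjoint system (Gronwall giving $e^{C(s-r)}$ rather than monotone decay) is a minor but legitimate refinement of what the paper states as \eqref{unique4}.
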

\begin{remark}\label{R:45}
We do not know if the sets $\omega_\eta\times E$, $\eta=1,\dots,\ell$, can be replaced by different and more general measurable sets $\mathcal D_\eta\subset\Omega\times (0,T)$.
\end{remark}
The second is a boundary observability inequality over possibly different measurable sets with the same projection over the time $t$-axis for each component of the system and the third, a  boundary observability over a general measurable subset of $\partial\Omega\times (0,T)$.
\begin{theorem}\label{6090}
Let $E\subset(0,T)$ be a measurable set with a positive measure and $\gamma_\eta\subset \p\Omega$, $\eta=1,\dots, \ell$, be measurable sets with $\min_{\eta=1,\dots,\ell}{|\gamma_\eta|}\ge \gamma_0$, for some $\gamma_0>0$. Then, there is $N=N(\Omega,E,T, \gamma_0,\delta)\geq1$ such that the inequality
\begin{equation*}
\|\mathbf{u}(T)\|_{L^2(\Omega)^\ell}\leq
N\int_{E}\sum_{\eta=1}^\ell\|\tfrac{\partial u^\eta}{\partial\nu}(t)\|_{L^1(\gamma_\eta)}\,dt
\end{equation*}
holds for all solutions $\mathbf u$ to \eqref{anas}. Here $\frac{\partial\mathbf u}{\partial \nu}=\big(\frac{\partial u^1}{\partial\nu},\dots,\frac{\partial u^\ell}{\partial\nu}\big)$ with $
\frac{\partial u^\eta}{\partial\nu}
\triangleq a_{ij}^{\eta\xi}\partial_{x_j}u^\xi\nu_i,
$
for $\eta=1,\dots,\ell.$
\end{theorem}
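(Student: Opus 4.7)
The plan is to transpose the strategy of \cite{AEWZ} from the scalar heat equation to the system \eqref{anas}, the new difficulty being that the observation at each time is split among $\ell$ possibly different boundary pieces $\gamma_\eta$, one per component of $\mathbf u$. The argument has two ingredients: a quantitative space--time interpolation inequality on a single time strip, and the telescoping around a Lebesgue density point of $E$ that turns the strip estimate into an integral over $E$.

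The heart of the matter is the slice estimate: for all $0\le s<t\le T$ and all solutions $\mathbf u$ to \eqref{anas},
\begin{equation*}
\|\mathbf u(t)\|_{L^2(\Omega)^\ell}\le Ce^{C/(t-s)}\Bigl(\int_s^t\sum_{\eta=1}^\ell\|\tfrac{\partial u^\eta}{\partial\nu}(\tau)\|_{L^1(\gamma_\eta)}\,d\tau\Bigr)^\theta\|\mathbf u(s)\|_{L^2(\Omega)^\ell}^{1-\theta},
\end{equation*}
for some $\theta\in(0,1)$ and $C$ depending only on $\Omega,\gamma_0,\delta$. I would prove it in three substeps. First, the analyticity of $\partial\Omega$ and of the coefficients, together with the ellipticity \eqref{E:Hadamard2}, yields via a Morrey--Nirenberg real--analytic regularity scheme for $\mathbf L$ Gevrey bounds on $\mathbf u$ up to the boundary; at each $\tau\in(s,t)$, $\mathbf u(\cdot,\tau)$ extends as a complex analytic function to a neighborhood of $\overline\Omega$ whose growth is controlled by a weighted $L^2$ norm on $\Omega\times(s,t)$. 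Second, for each $\eta$ a boundary Carleman estimate for the elliptic system $\mathbf L$ near points of $\gamma_\eta$, combined with the homogeneous Dirichlet condition $\mathbf u=0$, transfers $L^1$ control of $\tfrac{\partial u^\eta}{\partial\nu}$ on $\gamma_\eta$ into $L^2$ control of the full vector $\mathbf u$ on a half--ball attached to $\gamma_\eta$. Third, iterating an interior three--balls inequality for $\mathbf L$ along a chain of balls in $\Omega$ globalizes this local control to $\|\mathbf u(\tau)\|_{L^2(\Omega)^\ell}$, and the analytic extension in time supplied by step one yields the estimate at $t$ in terms of an integral over $(s,t)$.

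Once the slice estimate is available, I would fix a Lebesgue density one point $l\in E$ and a geometric sequence $r_k=\rho^k r_0$ with $\rho\in(0,1)$ close enough to $1$ that $|E\cap(l-r_k,l-r_{k+1})|\ge c(r_k-r_{k+1})$. Applying the slice estimate on $[l-r_k,l-r_{k+1}]$ with the observation integrand supported on $E$, together with Young's inequality in the form $A^\theta B^{1-\theta}\le \epsilon B+\epsilon^{-(1-\theta)/\theta}A$ and the forward energy bound $\|\mathbf u(l-r_{k+1})\|\le e^{C(r_k-r_{k+1})}\|\mathbf u(l-r_k)\|$, produces a recursion of the type already handled in \cite{AEWZ}. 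Choosing the Young parameters so that the geometric factors $e^{C/r_k}$ are absorbed by the contraction, summing in $k$, and using $\mathbf u(l-r_k)\to \mathbf u(l)$ gives
\begin{equation*}
\|\mathbf u(l)\|_{L^2(\Omega)^\ell}\le N\int_E\sum_{\eta=1}^\ell\|\tfrac{\partial u^\eta}{\partial\nu}(t)\|_{L^1(\gamma_\eta)}\,dt,
\end{equation*}
and a final semigroup bound from $l$ to $T$ completes the proof.

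The hardest step is the boundary propagation of smallness for the elliptic system $\mathbf L$ under the mere Legendre--Hadamard condition, with explicit constants compatible with the Gevrey scaling $e^{C/(t-s)}$ so that the telescoping in $k$ converges. The component--by--component structure of the observation is not in itself an obstacle, since each Carleman application is localized near a single $\gamma_\eta$ and the Dirichlet condition is shared by all components; the delicate point is setting up a vector--valued Carleman estimate for $\mathbf L$ whose weights do not introduce an extra exponential factor that would defeat the summation over $k$.
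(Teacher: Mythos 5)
Your telescoping step and your choice of the $e^{C/(t-s)}$ scale in the slice estimate are consistent with the paper's strategy, but the way you propose to obtain the slice estimate is both genuinely different from the paper's route and, as stated, has gaps that the paper's own arguments are specifically designed to circumvent.

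First, you lean on a ``boundary Carleman estimate for the elliptic system $\mathbf L$'' and a chain of three-balls inequalities for $\mathbf L$. The paper explicitly notes that such Carleman inequalities are not available for the second order systems considered here (this is stated as the very motivation for the whole approach: ``\emph{the observability inequalities for... \eqref{anas} have not been proved with Carleman methods... The reasons for these are the difficulties that one confronts when dealing with the calculation and test of the positivity of the commutators...}''). Under the mere Legendre--Hadamard condition \eqref{E:Hadamard2} a suitable vector-valued Carleman estimate is not at your disposal, so your Step~2 cannot be carried out as described. Worse, the claim that an $L^1$ observation of the single scalar flux $\tfrac{\partial u^\eta}{\partial\nu}$ on $\gamma_\eta$ alone can be upgraded by a boundary Carleman estimate into $L^2$ control of the \emph{full vector} $\mathbf u$ on a half-ball is not true in general: for a coupled system with zero Dirichlet data one cannot determine the other components from one component's conormal derivative on a small boundary patch. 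The paper never makes such a claim; it observes all $\ell$ conormal derivatives, one on each $\gamma_\eta$, and the component-by-component restriction to $\gamma_\eta$ happens only \emph{after} a global boundary observability estimate over the full lateral boundary is already in hand.

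The paper's actual route bypasses Carleman completely. It combines two ingredients. The first is Lemma~\ref{L: 6086}, a quantitative space--time analyticity bound of the form $|\partial_x^\alpha\partial_t^p\mathbf u(x,t)|\le e^{1/\rho t}\rho^{-|\alpha|-p}|\alpha|!\,p!\,t^{-p}\|\mathbf u_0\|_{L^2}$, obtained not by direct Morrey--Nirenberg applied to the parabolic evolution (which would give the weaker \eqref{E: kimalito}), but by the Landis--Oleinik device of adding an extra spatial variable $y$ so that $\mathbf U=\mathbf U_1+\mathbf U_2$ solves the purely elliptic system $\mathbf S\mathbf U+\partial_y^2\mathbf U=0$ with analytic coefficients and zero Dirichlet data, to which Morrey--Nirenberg then applies with constants uniform in $t$. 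The second is Lemma~\ref{6101}, a ``full boundary'' observability estimate $\|\mathbf u(T)\|_{L^2(\Omega)^\ell}\le(e^{N/[(\epsilon_2-\epsilon_1)T]}\|\tfrac{\partial\mathbf u}{\partial\nu}\|_{L^2(\partial\Omega\times(\epsilon_1T,\epsilon_2T))})^{1/2}\|\mathbf u_0\|^{1/2}_{L^2(\Omega)^\ell}$, derived from interior observability on the enlarged analytic domain $\Omega_\rho$ by the extension-and-duality trick, with no Carleman input. With these two in hand, the shrinkage of the observation set --- from $L^2$ on the whole lateral boundary, to $L^1$ on $\gamma_\eta$ per component at times in $E$ --- is achieved by propagation of smallness for analytic functions over measurable sets (the boundary analog of Lemma~\ref{propagation} applied componentwise, plus the one-dimensional variant \cite[Lemma~13]{AEWZ} in time), exactly as in the fourth-order case of Lemma~\ref{6089}. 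Only after that does the telescoping come in, as you describe. You should replace your Carleman/three-balls machinery by these propagation-of-smallness tools, and you should get the baseline observability by duality from the interior estimate rather than from a boundary Carleman inequality.
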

\begin{remark}\label{R:47}
We do not know if the sets $\gamma_\eta\times E$, $\eta=1,\dots,\ell$, can be replaced by different and general measurable sets $\mathcal J_\eta\subset\partial\Omega\times (0,T)$, $\eta=1,\dots, \ell$.
\end{remark}
\begin{theorem}\label{614g2}
Let $\mathcal{J}$ be measurable subset of $\p\Omega\times(0,T)$ with positive measure. Then, there is $N=N(\Omega,\delta,\mathcal{J},T)$ such that the inequality
\begin{equation*}
\|\mathbf{u}(T)\|_{L^2(\Omega)^\ell}
\leq N\int_{\mathcal{J}}\;|\tfrac{\p\mathbf{ u}}{\p\nu}(q,t)|\;d\sigma dt.
\end{equation*}
holds
for all solutions $\mathbf{u}$ to \eqref{anas},
\end{theorem}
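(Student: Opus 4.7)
The plan is to derive Theorem \ref{614g2} from the cylindrical boundary observability Theorem \ref{6090} by a boundary propagation-of-smallness argument, in the same spirit as the likely passage from Theorem \ref{614g3} to Theorem \ref{6106} in the scalar higher-order setting. The starting move is a Lebesgue density extraction: from $|\mathcal J|>0$ one produces a relatively open boundary patch $\Gamma\subset\partial\Omega$ lying in a single analytic chart of the form \eqref{E:descripcionfrontera}, a subinterval $I=(t_1,t_2)\subset(0,T)$ bounded away from $0$ and $T$, and a density $s\in(0,1]$ with $|\mathcal J\cap(\Gamma\times I)|\ge s\,|\Gamma|\,|I|$. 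The task then becomes to control $\|\partial_\nu\mathbf u\|_{L^1(\Gamma\times I)}$ by $\|\partial_\nu\mathbf u\|_{L^1(\mathcal J\cap(\Gamma\times I))}$, up to a manageable analytic factor.

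The essential input is the quantitative space-time analyticity of solutions that the abstract of the paper promises. Combining the hypotheses \eqref{E: condicion3} and \eqref{E: condicon5} on the coefficients with the boundary analyticity \eqref{E:descripcionfrontera} and standard parabolic smoothing, I expect the paper's estimates to yield uniform bounds $|\partial_{q,t}^\alpha(\partial_\nu\mathbf u)(q,t)|\le M_\tau\,|\alpha|!\,\rho^{-|\alpha|}$ for $(q,t)\in\partial\Omega\times[\tau,T]$ and $\alpha\in\N^{n+1}$, with $\rho=\rho(\Omega,\delta)>0$ and $M_\tau\le C(\tau,\Omega,\delta)\,\|\mathbf u(\tau/2)\|_{L^2(\Omega)^{\ell}}$. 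I would then apply a Vessella-type propagation of smallness for real-analytic functions to $\partial_\nu\mathbf u$ on $\Gamma\times I$, yielding $\|\partial_\nu\mathbf u\|_{L^\infty(\Gamma'\times I')}\le C\,M_\tau^{1-\theta}\,\|\partial_\nu\mathbf u\|_{L^1(\mathcal J\cap(\Gamma\times I))}^{\theta}$ on a slightly smaller box $\Gamma'\times I'\subset\Gamma\times I$, with $\theta=\theta(s,\Omega,\delta)\in(0,1)$.

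Feeding this into Theorem \ref{6090} with $\gamma_\eta=\Gamma'$ for every $\eta=1,\dots,\ell$ and $E=I'$ produces $\|\mathbf u(T)\|_{L^2(\Omega)^\ell}\le N\int_{I'}\sum_{\eta=1}^{\ell}\|\partial_\nu u^\eta(t)\|_{L^1(\Gamma')}\,dt\le N'\,|\Gamma'|\,|I'|\,\|\partial_\nu\mathbf u\|_{L^\infty(\Gamma'\times I')}$, and combining with the previous step gives the interpolation inequality $\|\mathbf u(T)\|_{L^2(\Omega)^\ell}\le C\,M_\tau^{1-\theta}\,\|\partial_\nu\mathbf u\|_{L^1(\mathcal J)}^{\theta}$.

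To turn this interpolation into the unweighted observability I would invoke the Phung--Wang--style telescoping from \cite{AEWZ,PW1}: re-run the density-and-Vessella reduction on a shrinking family of time-slabs $[\tau_{k+1},\tau_k]\uparrow T$, use the forward energy bound $\|\mathbf u(T)\|_{L^2}\le e^{C(T-t)}\|\mathbf u(t)\|_{L^2}$ following from \eqref{E: condicion3} and \eqref{E: condicon5}, and sum the resulting geometric series in $\theta$. The main obstacle I anticipate is the analytic regularity step: establishing the sharp quantitative analyticity of $\partial_\nu\mathbf u$ up to the analytic boundary for a possibly non-self-adjoint system (and under only the Legendre--Hadamard condition \eqref{E:Hadamard2}), and tracking the dependence of the Vessella constant on the density $s$ precisely enough that the telescoping actually converges. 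This is essentially the new technical content the paper has to add to the toolbox of \cite{AEWZ} in order to pass from cylinders to general measurable sets in the systems setting.
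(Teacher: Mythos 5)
The paper's own proof of Theorem~\ref{614g2} follows exactly the scheme of Theorem~\ref{6106}: one applies Fubini to set $\mathcal J_t=\{q\in\partial\Omega:(q,t)\in\mathcal J\}$ and $E=\{t\in(0,T):|\mathcal J_t|\ge|\mathcal J|/(2T)\}$, so that $|E|\ge|\mathcal J|/(2|\partial\Omega|)$ and the traces $|\mathcal J_t|$ are \emph{uniformly} bounded below for $t\in E$. The interpolation inequality that feeds the telescope is then a systems analogue of Lemma~\ref{6089} on a generic short slab $(t_1,t_2)$: for each fixed $t\in E\cap(t_1,t_2)$, Lemma~\ref{propagation} on the analytic hypersurface $\partial\Omega$ propagates $\int_{\partial\Omega}|\partial_\nu\mathbf u(\cdot,t)|\,d\sigma$ from $\mathcal J_t$; for each fixed $q$, \cite[Lemma 13]{AEWZ} propagates in $t$ from $E\cap(t_1,t_2)$; and both are plugged into Lemma~\ref{6101} using the analyticity bounds of Lemma~\ref{L: 6086}. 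Finally, one telescopes at a Lebesgue density point of $E$ via \cite[Lemma 2]{AEWZ}. Your plan---extract a thick product $\Gamma\times I$, do a joint space--time propagation of smallness, and then quote Theorem~\ref{6090}---is a genuinely different route.

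There are two gaps in your route. First, you call Theorem~\ref{6090} as a black box, but Theorem~\ref{6090} is a terminal observability, not a two-point interpolation inequality; what the telescope needs is an estimate of the form $\|\mathbf u(t_2)\|\le\bigl(e^{N/(t_2-t_1)}\cdot\text{observation on }(t_1,t_2)\bigr)^\theta\|\mathbf u(t_1)\|^{1-\theta}$ valid on every slab, which in this paper comes from Lemma~\ref{6101} (the interpolation precursor of Theorem~\ref{6090}), not from Theorem~\ref{6090} itself. Second, and more seriously: the density $s$ you extract is a statement about the fixed box $\Gamma\times I$. When you try to ``re-run the density-and-Vessella reduction on a shrinking family of time-slabs'' $(l_{k+1},l_k)$, there is nothing that prevents $\mathcal J$ from having essentially zero density in $\Gamma\times(l_{k+1},l_k)$ for the slabs near the anchor point, so your exponent $\theta_k$ could collapse to $0$ and the geometric series in the telescope need not converge. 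The Fubini decomposition is precisely what fixes this: $E$ is a \emph{single} measurable time set with $|\mathcal J_t|\ge|\mathcal J|/(2T)$ for every $t\in E$, so the boundary density is uniform in $t$, and \cite[Lemma 2]{AEWZ} applied to a Lebesgue point of $E$ gives slabs with $|E\cap(l_{k+1},l_k)|\ge(l_k-l_{k+1})/3$ automatically. Without that decomposition, your iteration is not substantiated. (A smaller point: the propagation of smallness in the paper is done separately in $q$ for fixed $t$ and in $t$ for fixed $q$, because the analytic radii in the boundary and time variables scale differently; a single joint space--time Vessella step would have to cope with this anisotropy explicitly.)
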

With the same methods as for Theorem \ref{6131}
one can also get an observability inequality for \eqref{anas} with observations over general interior measurable
sets.
\begin{theorem}\label{614g1}
Let $\mathcal{D}\subset\Omega\times(0,T)$ be a measurable set with positive measure. Then there is $N=N(\Omega,T,\mathcal{D},\delta)\geq1$ such that the inequality
\begin{equation*}
\|\mathbf{u}(T)\|_{L^2(\Omega)^\ell}\leq
N\int_{\mathcal{D}}|\mathbf{u}(x,t)|\;dxdt,
\end{equation*}
holds for all solutions $\mathbf{u}$ to \eqref{anas}.
\end{theorem}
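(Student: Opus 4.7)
The plan is to follow the strategy of \cite{AEWZ} as already implemented for Theorem \ref{6131}, adapted to the second order parabolic system \eqref{anas}. The three key ingredients are: (i) quantitative space-time analytic bounds for solutions $\mathbf u$; (ii) a propagation of smallness estimate from measurable sets, obtained from the analytic bounds by a Vessella-type argument; and (iii) a Lebesgue density point argument that reduces a general measurable set $\mathcal D$ to a family of space slices of positive measure, closed by a telescoping in time based on the quasi-dissipativity of the semigroup generated by $\mathbf L$.

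First I would prove that under the analyticity and (Legendre--Hadamard) ellipticity hypotheses on the coefficients and the analyticity \eqref{E:descripcionfrontera} of $\p\Omega$, any solution to \eqref{anas} is real analytic in $\overline{\Omega}\times(0,T]$ with explicit Cauchy-type bounds
\begin{equation*}
|\p_t^j\p_x^\al \mathbf u(x,t)|\le M\rho^{-|\al|-2j-1}(|\al|+2j)!\,t^{-|\al|/2-j}\|\mathbf u_0\|_{L^2(\Omega)^\ell},
\end{equation*}
with $\rho,M$ depending only on $\Omega$ and $\delta$. These follow from iterating interior and boundary parabolic analytic regularity, flattening $\p\Omega$ via the charts \eqref{E:descripcionfrontera}, and representing the semigroup on balls slightly inside $\Omega$ by a Cauchy contour; this is precisely the quantitative space-time analyticity announced in the abstract.

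From these bounds I would deduce, following \cite{AE,AEWZ} and Vessella's propagation of smallness, the interpolation inequality: for every ball $B_{2R}(x_0)\subset\Omega$, every measurable $\o\subset B_R(x_0)$ with $|\o|\ge\mu|B_R|$ and every $t\in(0,T]$ there exist $\theta=\theta(\mu)\in(0,1)$ and $C=C(\Omega,\delta,R,t)$ such that
\begin{equation*}
\|\mathbf u(\cdot,t)\|_{L^2(B_R(x_0))^\ell}\le C\|\mathbf u(\cdot,t)\|_{L^2(\Omega)^\ell}^{1-\theta}\Bigl(\int_{\o}|\mathbf u(x,t)|\,dx\Bigr)^{\theta}.
\end{equation*}
Propagating this along a chain of overlapping balls covering $\Omega$ up to the boundary strip (where the analytic flattening provides the analogous estimate) yields a global version with $B_R(x_0)$ replaced by $\Omega$. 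To remove the product structure of $\mathcal D$, pick a Lebesgue density point $(x_0,t_0)\in\mathcal D$ with $t_0<T$: for each small $r>0$ the slice $\o_{r,t}=\{x\in B_r(x_0):(x,t)\in\mathcal D\}$ satisfies $|\o_{r,t}|\ge\tfrac12|B_r|$ for $t$ in a subset $E_r\subset(t_0-r^2,t_0)$ of measure at least $r^2/2$. Applying the interpolation on each such slice, integrating in $t$ and using the energy estimate $\|\mathbf u(t)\|_{L^2}\le e^{K(t-s)}\|\mathbf u(s)\|_{L^2}$ to move times, one obtains
\begin{equation*}
\|\mathbf u(t_0)\|_{L^2(\Omega)^\ell}\le C(r)\,\|\mathbf u(t_0-r^2)\|_{L^2(\Omega)^\ell}^{1-\theta}\Bigl(\int_{\mathcal D}|\mathbf u(x,t)|\,dx\,dt\Bigr)^{\theta}.
\end{equation*}

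Finally, a telescoping argument along a dyadic sequence $t_k\to t_0^-$ combined with Young's inequality removes the factor $\|\mathbf u(t_0-r^2)\|_{L^2(\Omega)^\ell}$ and produces $\|\mathbf u(t_0)\|_{L^2(\Omega)^\ell}\le N\int_{\mathcal D}|\mathbf u|\,dx\,dt$; one further application of the energy estimate replaces $t_0$ by $T$. The main obstacle is this last telescoping step: since $\mathbf L$ is not self-adjoint one loses pure dissipation and must work with the weighted norm $e^{-\gamma t}\|\mathbf u(t)\|_{L^2}$, tuning $\gamma$ so that the quasi-dissipation is compatible with the factor $1-\theta$ in the interpolation and the geometric series closes. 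A secondary difficulty is pushing the Vessella-type propagation of smallness up to $\p\Omega$; this is where the analyticity of the boundary and the strong ellipticity (or Legendre--Hadamard) condition are needed to control normal derivatives in analytic norms.
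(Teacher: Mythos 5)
The overall blueprint you describe — quantitative space-time analyticity, Vessella-type propagation of smallness, Fubini to cut $\mathcal D$ into good time slices, telescoping series — is the same as the paper's, whose proof of Theorem \ref{614g1} is indeed just ``same methods as Theorem \ref{6131}'' applied to \eqref{anas}. But there is a decisive gap in your step (i). The analyticity bound you propose,
\begin{equation*}
|\p_t^j\p_x^\al \mathbf u(x,t)|\le M\rho^{-|\al|-2j-1}(|\al|+2j)!\,t^{-|\al|/2-j}\|\mathbf u_0\|_{L^2(\Omega)^\ell},
\end{equation*}
carries the factor $t^{-|\al|/2}$ on the spatial derivatives. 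This is exactly the estimate of type \eqref{E: kimalito} that the paper singles out as inadequate: it says the spatial analyticity radius of $\mathbf u(\cdot,t)$ is of order $\sqrt t$ and degenerates as $t\to 0$. The interpolation exponent $\theta$ in Lemma~\ref{propagation} depends on that radius $\rho$ and tends to $0$ as $\rho\to 0$, so along your dyadic sequence $t_k\to t_0^-$ the exponent $\theta$ would shrink, the factor $\varepsilon^{-(1-\theta)/\theta}$ from Young's inequality would blow up, and the geometric/telescoping series would not close. Your closing paragraph identifies the obstacle as non-self-adjointness and a missing weight $e^{-\gamma t}$; that is not where the difficulty lies. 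The standard energy estimate suffices for the telescoping, and the real obstruction is the $t$-dependence of the spatial analyticity radius in your bound.

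What the paper uses instead is Lemma~\ref{L: 6086}, which for $m=1$ gives
\begin{equation*}
|\p_x^\al\p_t^p\mathbf u(x,t)|\le e^{1/(\rho t)}\,\rho^{-|\al|-p}|\al|!\,p!\,t^{-p}\,\|\mathbf u_0\|_{L^2(\Omega)^\ell},
\end{equation*}
with no $t^{-|\al|/2}$: the spatial analyticity radius is a fixed $\rho=\rho(\delta)$, uniform over $(0,1]$, and the $t$-singularity enters only through the prefactor $e^{1/\rho t}$, which sits in the constant $M$ of Lemma~\ref{propagation} rather than in $\rho$, hence does not affect $\theta$ and is absorbed by the choice of $\varepsilon=e^{-1/(l_k-l_{k+1})}$ in the telescoping. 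Your proposed route to the bound (iterated interior/boundary parabolic regularity, flattening, Cauchy contour) would reproduce \eqref{E: kimalito}, not this improvement. The paper instead follows the Landis--Oleinik reduction: first a global time-analyticity estimate (Lemma~\ref{L: 60100} via energy iteration on $\p_t^p\mathbf u$), then an extra spatial variable $x_{n+1}$ is attached to symmetrize the lower-order terms, and $\p_t^p\mathbf u$ is realized as the trace at $y=0$ of a solution $\mathbf U$ of a \emph{time-independent elliptic} system on $Q_{1/2}\times(-\rho/4,\rho/4)$, built from a spectral piece $\mathbf U_1$ and a Fourier/Paley--Wiener piece $\mathbf U_2$ with $e^{-\rho\sqrt{|\mu|}/2}$ decay; Morrey--Nirenberg elliptic analytic estimates up to the analytic boundary then yield spatial bounds with $t$-independent radius. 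This is the essential ingredient your proposal is missing. As a secondary point, your local density argument around a Lebesgue point $(x_0,t_0)$ confines the observation sets to balls $B_r(x_0)$ of shrinking radius, whereas the paper's Fubini argument keeps the observed slices $\mathcal D_t$ of \emph{uniform} relative measure inside $\Omega$ for $t\in E$; the latter is the cleaner choice to keep the constants in Lemma~\ref{propagation} uniform along the telescoping chain.
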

\begin{remark}
The constant in Theorem \ref{614g1} is of the form $e^{N/T}$ with $N=N(\Omega,\omega,\delta)$, when $\mathcal{D}=\omega\times(0,T)$, $0<T\le 1$ and $\omega\subset\Omega$.
\end{remark}

Finally, the last observability inequality deals with the observation of only one interior component of two coupled parabolic equations over a measurable set (See \cite{WL} for the case of open sets). In particular, we consider the {\it time independent} not completely uncoupled parabolic system
\begin{equation}\label{heats2}
\begin{cases}
\partial_t u-\Delta u+a(x)u+b(x)v=0,\ &\text{in}\;\;\Omega\times(0,T),\\
\partial_t v-\Delta v+c(x)u+d(x)v=0,\ &\text{in}\;\;\Omega\times(0,T),\\
u=0,\;\;v=0,\;\;&\text{on}\;\;\partial\Omega\times(0,T),\\
u(0)=u_0,\;\;v(0)=v_0,\;\;&\text{in}\;\;\Omega,
\end{cases}
\end{equation}
with $a$, $b$, $c$ and $d$ analytic in $\overline\Omega$, $b(\cdot)\neq 0$, somewhere in $\overline\Omega$ and with
\begin{equation*}
|\p_x^\gamma a(x)|+|\p_x^\gamma b(x)|+ |\p_x^\gamma c(x)|+ |\p_x^\gamma d(x)|\leq \delta^{-|\gamma|-1}|\gamma|!,\ \text{for all}\ \gamma\in\mathbb{N}^n\ \text{and}\ x\in\overline\Omega,
\end{equation*}
for some $\delta>0$. Then, we get the following bound.
\begin{theorem}\label{bangbang-onecontrol}
Let $\mathcal D\subset \Omega\times(0,T)$ be a measurable set with  positive measure. Then there is $N=N(\Omega,\mathcal D,T, \delta)$ such that the inequality
\begin{equation*}
\|u(T)\|_{L^2(\Omega)}+
\|v(T)\|_{L^2(\Omega)}
\leq N\int_{\mathcal D}|u(x,t)|\,dxdt,
\end{equation*}
holds  for all solutions $(u,v)$ to \eqref{heats2}
\end{theorem}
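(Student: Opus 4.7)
The plan is to reduce the measurable observation of $u$ on $\mathcal{D}$ to an open-cylinder observation of $u$, and then invoke the $u$-observability of \eqref{heats2} on open cylinders from \cite{WL}. The reduction relies on the quantitative space-time real analyticity of $u$ together with the propagation-of-smallness-from-measurable-sets argument used throughout the preceding proofs.

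First, since $b$ is analytic on $\overline\Omega$ with $b\not\equiv0$, the set $\{b\ne 0\}$ is open and nonempty, so one may fix a ball $B=B_{r_0}(x_0)\subset\subset\Omega$ on which $|b|\ge b_0>0$. For any subinterval $(t_1,t_2)\subset(0,T)$ I would invoke the open-cylinder observability of \cite{WL} in the form
\[
\|u(T)\|_{L^2(\Omega)}+\|v(T)\|_{L^2(\Omega)}\le N_0\,\|u\|_{L^2(B\times(t_1,t_2))},
\]
with $N_0$ depending on $\Omega,T,t_1,t_2,B,b_0,\delta$; this is the step that exploits the non-degeneracy of the coupling $b$ on $B$ to propagate information from $u$ to $v$. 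The task is then reduced to bounding the right-hand side by $\int_\mathcal{D}|u|\,dxdt$.

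Second, I would establish the quantitative space-time real analyticity of $(u,v)$ on $\Omega\times(0,T)$ by the same Gevrey-type arguments already used in the paper for \eqref{E: 6081} and \eqref{anas}, since the principal part of each equation in \eqref{heats2} is the heat operator and the coefficients $a,b,c,d$ are time-independent and analytic on $\overline\Omega$. With this analyticity in hand, I would apply the propagation-of-smallness-from-measurable-sets lemma underlying the proofs of Theorems \ref{6131} and \ref{614g1} to the analytic function $u$: for some $\theta\in(0,1)$ and $N>0$,
\[
\|u\|_{L^2(B\times(t_1,t_2))}\le N\Bigl(\int_\mathcal{D}|u(x,t)|\,dxdt\Bigr)^\theta M^{1-\theta},
\]
with $M$ a reference $L^\infty$-bound for $u$ on a slightly enlarged cylinder, controlled in turn by $\|(u(\tau_0),v(\tau_0))\|_{L^2(\Omega)}$ for a suitable $\tau_0<t_1$.

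Finally, I would combine the two displays on a sequence of shrinking cylinders accumulating at $t=T$ (the telescoping of \cite{AEWZ}) and use the forward-in-time energy bound $\|(u(T),v(T))\|_{L^2}\le e^{C(T-\tau)}\|(u(\tau),v(\tau))\|_{L^2}$---equivalently, the monotonicity of $t\mapsto e^{C(T-t)}\|(u(t),v(t))\|^2_{L^2}$ after a weight adjustment---to dominate the $M$ factors by $\|u(T)\|_{L^2}+\|v(T)\|_{L^2}$, and absorb them through Young's inequality to arrive at the linear observability. I expect the main obstacle to be the coordination of this telescoping: only the one-component bound $M$ for $u$ is available, whereas the left-hand side controls both $u(T)$ and $v(T)$. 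The argument closes precisely because the first step (from \cite{WL}) converts open-cylinder observations of $u$ into full two-component control, thanks to the non-degeneracy of $b$ on $B$.
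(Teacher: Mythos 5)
You propose a genuinely different route from the paper's. The paper never invokes \cite{WL}: instead it uses the first equation of \eqref{heats2} to \emph{algebraically eliminate} $v$ on the ball $B_R(x_0)$ where $|b|\ge 1$, writing $b(x)v=-\partial_t u+\Delta u-a(x)u$, so that $\int_{B_R}|v|\,dx\lesssim\int_{B_R}|u|+|\partial_t u|+|\Delta u|\,dx$. The remaining task is then to propagate smallness of the \emph{derivatives} $\partial_t u$ and $\Delta u$ from the measurable set $\mathcal D$, which is precisely why the paper introduces the new Lemma \ref{higherderivative} (for $\Delta u$, with $|\alpha|\le 2$) together with Lemma \ref{982} with $k=1$ (for $\partial_t u$). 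That gives a two-component interpolation inequality of the form $\|(u(t_2),v(t_2))\|_{L^2}\le\bigl(\int_{t_1}^{t_2}\chi_E\|u\|_{L^1(\mathcal D_t)}\,dt\bigr)^\theta\bigl(Ne^{N/(t_2-t_1)}\|(u(t_1),v(t_1))\|_{L^2}\bigr)^{1-\theta}$ which telescopes directly, and the whole argument remains self-contained and free of Carleman estimates, in line with the rest of the paper. Your plan replaces the elimination step by citing \cite{WL} as a black box and so never touches Lemma~\ref{higherderivative}, which is the paper's actual new ingredient here.

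There is also a concrete gap in your plan. For the telescoping series to converge you need the open-cylinder observability constant $N_0=N_0(\Omega,T,t_1,t_2,B,b_0,\delta)$ from \cite{WL} to obey a \emph{quantitative} bound of the type $N_0\lesssim e^{C/(t_2-t_1)}$ on the shrinking intervals $(l_{k+1},l_k)$; otherwise the weight $q$ in the recursion cannot be chosen to make the series sum. You do not verify that \cite{WL} --- a local controllability result for a nonlinear reaction--diffusion system --- yields a linear observability inequality with this explicit dependence; it would have to be re-derived by tracking constants through the underlying Carleman estimate. In addition, the intermediate estimate that enters the telescoping must bound $\|(u(l_k),v(l_k))\|_{L^2}$, not $\|(u(T),v(T))\|_{L^2}$, by the cylinder observation over $(l_{k+1},l_k)$ and $\|(u(l_{k+1}),v(l_{k+1}))\|_{L^2}$; the phrasing ``dominate the $M$ factors by $\|u(T)\|_{L^2}+\|v(T)\|_{L^2}$'' runs in the wrong direction for the forward-in-time energy decay and has to be reorganized into the standard difference-of-weights form before summing. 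Until the quantitative form of the \cite{WL} inequality is pinned down, the proposal does not close.
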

\begin{remark}\label{R: 7}
Theorem \ref{bangbang-onecontrol} is still valid when the Laplace operator $\Delta$ in  \eqref{heats2} is replaced by two second elliptic operators $\nabla\cdot(\mathbf A_i(x)\nabla\cdot)$, $i=1,2$, with matrices $\mathbf A_i$ real-analytic, symmetric and positive-definite over $\overline\Omega$. Here, we must make sure that the higher order terms of the system remain uncoupled: a diagonal principal  part. Otherwise, we do not know if such kind  of observability estimates are possible. We believe that generally they are not.
\end{remark}

As far as we know, the observability inequalities for the evolutions \eqref{E: 6081} for $m\ge 2$ and \eqref{anas} have not been proved with Carleman methods; not even when $\mathcal D$, $\mathcal J$, $\Gamma_1$, $\Gamma_2$, $\omega_\eta$, $\gamma_\eta$ are open sets and $E=(0,T)$, cases where the standard techniques to prove Carleman inequalities should make it more feasible. The reasons for these are the difficulties that one confronts when dealing with the calculation and test of the positivity of the commutators associated to the Carleman methods for higher order equations and second order systems.

The method we use relies on the {\it telescoping series method} -  built with ideas borrowed from \cite{Miller2} and first used in \cite{PW1} - and on local observability inequalities for analytic functions over measurable sets: the Lemma \ref{propagation} as in \cite{AE, AEWZ} and a new extension of Lemma \ref{propagation}, the Lemma \ref{higherderivative} below. We use Lemma \ref{higherderivative} in the proof of Theorem \ref{bangbang-onecontrol}.

\begin{lemma}\label{propagation}
Let $\Omega$ be a bounded domain in $\Rn$ and $\omega\subset\Omega$ be a measurable set of positive measure. Let $f$ be an analytic function in $\Omega$ satisfying
$$|\p_x^\alpha f(x)|\leq M\rho^{-|\alpha|}|\alpha|!,\ \text{for}\  x\in\Omega\ \text{and}\ \alpha\in\mathbb{N}^n,$$
for some numbers $M$ and $\rho$. Then, there are $N=N(\Omega,\rho,|\omega|)$ and $\theta=\theta(\Omega,\rho,|\omega|)$, $0<\theta<1$, such that
\begin{equation*}
\|f\|_{L^\infty(\Omega)}\leq NM^{1-\theta}\Big(\text{\rlap |{$\int_{\omega}$}}|f|\,dx\Big)^{\theta}.
\end{equation*}
\end{lemma}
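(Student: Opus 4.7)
The plan is to prove Lemma \ref{propagation} in three steps: a local Hölder-type propagation of smallness on a ball where $\omega$ has positive density, a Hadamard three-balls chain that spreads the smallness over all of $\Omega$, and a Chebyshev trick that passes from a sup-norm bound on $\omega$ to the $L^1$-average bound in the stated inequality.

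\textbf{Step 1 (local bound).} By the Lebesgue density theorem, choose $x_0\in\omega$ and $r>0$ with $B_{2r}(x_0)\subset\Omega$, $2r<\rho/2$, and $|\omega\cap B_r(x_0)|\geq \tfrac12 |B_r|$; the threshold $r$ can be chosen to depend only on $\Omega,\rho,|\omega|$. Let $P_N$ be the Taylor polynomial of $f$ at $x_0$ of degree $N$. The hypothesis on $\partial^\alpha f$ gives the remainder bound $\|f-P_N\|_{L^\infty(B_r(x_0))}\leq C_n M\,(2r/\rho)^{N+1}$. Since $B_r(x_0)$ is a convex body and $P_N$ is a polynomial of degree $N$, the Remez--Brudnyi--Ganzburg inequality yields
\begin{equation*}
\|P_N\|_{L^\infty(B_r(x_0))}\leq \Bigl(\tfrac{C_n|B_r|}{|\omega\cap B_r(x_0)|}\Bigr)^{N}\|P_N\|_{L^\infty(\omega\cap B_r(x_0))}\leq A^N\bigl(\|f\|_{L^\infty(\omega)}+C_nM(2r/\rho)^{N+1}\bigr),
\end{equation*}
for an $A=A(n)$ since the density is at least $1/2$. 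Shrinking $r$ once more so that $2rA/\rho\leq 1/2$ and optimizing over $N$ in the resulting bound $\|f\|_{L^\infty(B_r(x_0))}\leq A^N\|f\|_{L^\infty(\omega)}+C_nM 2^{-N}$ produces the local estimate $\|f\|_{L^\infty(B_r(x_0))}\leq N_1M^{1-\theta_1}\|f\|_{L^\infty(\omega)}^{\theta_1}$ with $\theta_1\in(0,1)$ depending only on $\Omega,\rho,|\omega|$.

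\textbf{Step 2 (propagation by chain of balls).} The uniform derivative bound implies that $f$ extends holomorphically to a $\rho$-tube around $\overline\Omega$ in $\C^n$ with $|f|\leq C_nM$ there. For such holomorphic functions $\log|f|$ is plurisubharmonic, and the Hadamard three-balls inequality states that for concentric balls $B_{r_1}\subset B_{r_2}\subset B_{r_3}$ in the tube one has $\|f\|_{L^\infty(B_{r_2})}\leq C\|f\|_{L^\infty(B_{r_1})}^{\alpha}\|f\|_{L^\infty(B_{r_3})}^{1-\alpha}$ with $\alpha\in(0,1)$ determined by the radii. Since $\overline\Omega$ is compact and connected and $\rho$ is fixed, we can cover $\overline\Omega$ by a finite chain of such triples starting at $B_r(x_0)$; iterating the three-balls inequality along the chain and using $|f|\leq C_nM$ on the large balls yields $\|f\|_{L^\infty(\Omega)}\leq N_2 M^{1-\theta_2}\|f\|_{L^\infty(B_r(x_0))}^{\theta_2}$ with $\theta_2\in(0,1)$ depending only on $\Omega,\rho$. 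Composing with Step 1 gives $\|f\|_{L^\infty(\Omega)}\leq NM^{1-\theta'}\|f\|_{L^\infty(\omega)}^{\theta'}$ where $\theta'=\theta_1\theta_2$.

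\textbf{Step 3 (sup to average).} Set $\omega_*=\{x\in\omega:|f(x)|\leq 2\,\text{\rlap |{$\int_{\omega}$}}|f|\,dx\}$; Chebyshev's inequality gives $|\omega_*|\geq |\omega|/2$, so $\omega_*$ still has positive measure controlled below in terms of $|\omega|$, and we may re-run Steps 1--2 with $\omega_*$ in place of $\omega$ (the density point and radius $r$ now depend only on $|\omega|/2$). Since $\|f\|_{L^\infty(\omega_*)}\leq 2\,\text{\rlap |{$\int_{\omega}$}}|f|\,dx$ by construction, we obtain the stated inequality.

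\textbf{Main obstacle.} The genuine input is the Brudnyi--Ganzburg polynomial Remez inequality on a convex body and its quantitative dependence on the measure ratio $|E|/|B_r|$; given that, Steps 2 and 3 are routine. The delicate book-keeping is verifying that the radius $r$, the density level, and the length of the chain in Step 2 can all be chosen in terms of $\Omega$, $\rho$, and $|\omega|$ alone, so that $N$ and $\theta$ in the final inequality are independent of the location of $\omega$ inside $\Omega$.
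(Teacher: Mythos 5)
The paper does not actually prove Lemma \ref{propagation}; it cites Vessella and points to a simplified proof in Apraiz--Escauriaza \cite[\S 3]{AE}. So there is no internal proof to compare against, and your three-step architecture (local Remez estimate, Hadamard three-balls chain, Chebyshev reduction from $L^\infty$ to $L^1$) is a legitimate and familiar route that is close in spirit to the cited references.

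However, Step~1 as written contains a genuine gap. You assert that one can pick a density point $x_0\in\omega$ and a radius $r$ with $|\omega\cap B_r(x_0)|\geq\tfrac12|B_r|$, and that ``the threshold $r$ can be chosen to depend only on $\Omega,\rho,|\omega|$.'' This is false: the scale at which the Lebesgue density of a measurable set exceeds $1/2$ is \emph{not} controlled by its total measure. For instance, take $\omega$ to be a union of $K$ disjoint balls of radius $\e$ with $K\e^n$ fixed; the total measure is fixed, but density $\geq 1/2$ is attained only at scales $r\lesssim\e$, which can be made arbitrarily small. Since the Remez/Brudnyi--Ganzburg constant $A^N$ enters exponentially in the degree, and the degree $N$ must be optimized against the Taylor remainder $(2r/\rho)^{N+1}$, allowing $r$ to degenerate breaks the uniformity of $\theta_1$ and $N_1$ in $\omega$, which is precisely what the lemma claims.

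The repair is standard and replaces the Lebesgue density theorem by pigeonhole at a \emph{fixed} scale: cover $\Omega$ by finitely many cubes (or balls) of side $s$, with $s$ chosen in terms of $\rho$ and $n$ alone so that the Taylor remainder decays geometrically; the number $K$ of cubes depends only on $\Omega$ and $s$, hence on $\Omega$ and $\rho$; pigeonhole then produces a cube $Q$ with $|\omega\cap Q|\geq|\omega|/K$, giving a measure ratio $|\omega\cap Q|/|Q|\gtrsim |\omega|/|\Omega|$, which is independent of $s$. The Remez constant then depends only on $n$ and $|\omega|/|\Omega|$, the optimization in $N$ goes through, and the rest of your Steps~2 and~3 (three-balls chain over a compact connected $\overline\Omega$, then Chebyshev to pass to the $L^1$ average) are fine. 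You should also make the same replacement in Step~3, since re-running Step~1 with $\omega_*$ inherits the same issue.
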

 Lemma \ref{propagation} was first derived in \cite{Vessella}. See also \cite{Nadirashvili2} and \cite{Nadirashvili} for close results. The reader can find a simpler proof of  Lemma \ref{propagation} in \cite[\S 3]{AE}. The proof there is  built with ideas from \cite{Malinnikova}, \cite{Nadirashvili2} and  \cite{Vessella}.

\begin{lemma}\label{higherderivative} Let $\Omega$ be a bounded domain in $\Rn$ and $\omega\subset\Omega$ be a measurable set with positive Lebesgue measure. Let $f$ be an analytic function in $\Omega$ satisfying
\begin{equation*}
|\partial_x^\alpha f(x)|\leq M|\alpha|!\,\rho^{-|\alpha|},\ \text{for}\ \alpha\in\mathbb{N}^n \;\;\text{and}\ x\in \Omega,
\end{equation*}
for some $M>0$ and $0<\rho\le 1$. Then, there are constants $N=N(\Omega,\rho,|\omega|, n)$ and $\theta=\theta(\Omega,\rho,|\omega|)$, $0<\theta<1$, such that
\begin{equation*}
\|\partial^\alpha_x f\|_{L^\infty(\Omega)}\leq |\alpha|!\, (\rho/N)^{-|\alpha|-1}
M^{1-\frac{\theta}{2^{|\alpha|}}}
\Big(\text{\rlap |{$\int_{\omega}$}}|f|\,dx\Big)^{\frac{\theta}{2^{|\alpha|}}}, \ \text{when}\  \alpha\in\N^n.
\end{equation*}
\end{lemma}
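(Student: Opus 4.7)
The idea is to combine Lemma~\ref{propagation}, applied once to $f$ itself, with a complex-analytic interpolation and the Cauchy integral formula. From Lemma~\ref{propagation} we obtain
$$\|f\|_{L^\infty(\Omega)}\le A^*:=NM^{1-\theta}\Big(\text{\rlap |{$\int_{\omega}$}}|f|\,dx\Big)^{\theta},$$
for some $\theta=\theta(\Omega,\rho,|\omega|)\in(0,1)$. The analyticity bound forces $f$ to extend holomorphically to the complex tube $T_{r_0}=\{z\in\mathbb C^n:\mathrm{dist}(z,\Omega)<r_0\}$, with $r_0=c\rho$ (here $c=c(n)$) and $|f|\le 2M$ on $T_{r_0}$.

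For $x\in\Omega$ at distance $>\ell r_0$ from $\partial\Omega$ (where $\ell:=|\mathrm{supp}(\alpha)|\le n$) and for each coordinate direction $e_j$, the function $\zeta\mapsto f(x+\zeta e_j)$ is holomorphic on the disk $|\zeta|<r_0$, bounded by $2M$ on the disk and by $A^*$ on its real diameter. A standard one-variable interpolation (conformal change of variables to a strip plus Hadamard's three-lines theorem; equivalently, the two-constants theorem via harmonic measure in the upper half-disk) gives, on $|\zeta|\le r<r_0$,
$$\max_{|\zeta|\le r}|f(x+\zeta e_j)|\le (A^*)^{1-\mu}(2M)^{\mu},$$
with $\mu=\mu(r,r_0)\in(0,1)$ computable explicitly (at $\zeta=ir$ one has $\mu=(2/\pi)\arctan(2rr_0/(r_0^2-r^2))$). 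Iterating this estimate direction by direction over the $\ell$ active coordinates (at each step the real-diameter bound for the next slice comes from the previous iteration applied at a real base point) yields the polydisk bound
$$\max_{P_r^\ell(x)}|f|\le(A^*)^{(1-\mu)^\ell}(2M)^{1-(1-\mu)^\ell},$$
and the multivariate Cauchy formula then gives $|\partial^\alpha f(x)|\le\alpha!\,r^{-|\alpha|}\max_{P_r^\ell(x)}|f|$.

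Choose $\mu=1-2^{-|\alpha|/\ell}$ so that $(1-\mu)^\ell=1/2^{|\alpha|}$; since $\alpha_i\ge 1$ on $\mathrm{supp}(\alpha)$ forces $|\alpha|\ge\ell$, one has $\mu\ge 1/2$, which through the harmonic-measure formula forces $r\ge r_0(\sqrt{2}-1)$, whence $r^{-|\alpha|}\le r_0^{-|\alpha|}(\sqrt{2}+1)^{|\alpha|}$. Substituting $A^*=NM^{1-\theta}(\text{\rlap |{$\int_{\omega}$}}|f|\,dx)^\theta$ and simplifying, the exponents of $M$ and of $\text{\rlap |{$\int_{\omega}$}}|f|\,dx$ come out to exactly $1-\theta/2^{|\alpha|}$ and $\theta/2^{|\alpha|}$ respectively, while the overall prefactor $\alpha!\,r^{-|\alpha|}\cdot 2^{1-1/2^{|\alpha|}}N^{1/2^{|\alpha|}}$ is majorized by $|\alpha|!(\rho/N')^{-|\alpha|-1}$ for a suitable $N'=N'(\Omega,\rho,|\omega|,n)$ chosen large enough once and for all (using $\rho\le 1$ to absorb the exponential factor $(\sqrt{2}+1)^{|\alpha|}/c^{|\alpha|}$ into $N'^{|\alpha|+1}$). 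Points $x\in\Omega$ close to $\partial\Omega$ are handled by running the same interpolation with the ``small'' set being only the portion of each real diameter that lies in $\Omega$: the associated harmonic-measure exponent remains uniformly bounded away from $0$ and $1$, with the extra constants absorbed into $N'$.

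The main technical obstacle is the interpolation step itself: the hypothesis controls the holomorphic function on a real segment rather than on concentric circles, so Hadamard's three-circles theorem does not apply directly, and one must invoke either a conformal map to a strip or an explicit harmonic-measure computation in the upper half-disk. Careful bookkeeping is then needed to verify that the prefactor $r^{-|\alpha|}$ only worsens by an exponential $C^{|\alpha|}$ and that this factor, together with the constants from Lemma~\ref{propagation}, can be comfortably absorbed into the single constant $N'$ appearing in $(\rho/N')^{-|\alpha|-1}$.
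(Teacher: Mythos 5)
Your route is genuinely different from the paper's. The paper proves two elementary real-variable lemmas on the unit interval and cube (Lemmas~\ref{L:1} and \ref{L:2}): from $\|f^{(m)}\|_\infty\le M\rho^{-m}m!$ it extracts, by integrating $f^{(k+1)}$ over a short interval of length $\varepsilon$ and optimizing in $\varepsilon$, the recursive bound $\|f^{(k)}\|_\infty\le (8M(k+1)!\rho^{-k-1})^{1/2}\|f^{(k-1)}\|_\infty^{1/2}$, iterates to get the $1/2^j$ exponent, then tensorizes direction by direction; Lemma~\ref{higherderivative} follows after a rescaling and an application of Lemma~\ref{propagation}. You instead extend $f$ holomorphically to a complex tube, use the two-constants/harmonic-measure theorem in half-disks to interpolate between the $L^\infty(\Omega)$ bound from Lemma~\ref{propagation} and the a priori bound $2M$, iterate over the active coordinates, and finish with the Cauchy formula. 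Your bookkeeping for the exponents and for the absorption of the factors $(\sqrt2+1)^{|\alpha|}$, $2^{1-1/2^{|\alpha|}}$, $N^{1/2^{|\alpha|}}$ into $(\rho/N')^{-|\alpha|-1}$ checks out. Incidentally, since the Cauchy formula pays only in the $r^{-|\alpha|}$ prefactor and not in the interpolation exponent, fixing $\mu=1/2$ (rather than letting $\mu\to 1$ as $|\alpha|\to\infty$) would give the sharper exponent $\theta/2^{\ell}\ge\theta/2^n$ with $\ell=|\mathrm{supp}\,\alpha|$, uniform in $|\alpha|$; you are throwing this improvement away to match the stated exponent, which is fine but worth noting.

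There is, however, a genuine gap in the treatment of points near $\partial\Omega$. You claim that when a real diameter $\{x+te_j:|t|<r_0\}$ exits $\Omega$ one may run the same harmonic-measure interpolation with the "small'' set being the portion of the diameter inside $\Omega$, and that "the associated harmonic-measure exponent remains uniformly bounded away from $0$ and $1$.'' That is not true as stated: as $x\to\partial\Omega$ the length of $\{t:x+te_j\in\Omega\}$ can shrink to zero, and with it the harmonic measure of that segment at $\zeta=ir$ tends to $0$, so the interpolation degenerates. To repair this you need to use the hypothesis to first extend $f$: the bounds $|\partial^\alpha f|\le M|\alpha|!\rho^{-|\alpha|}$ on $\Omega$ force the Taylor series of $f$ at each $x_0\in\Omega$ to converge on a ball $B_{c\rho}(x_0)$, defining a real-analytic extension of $f$ to $\Omega_{c\rho}=\{x:\mathrm{dist}(x,\Omega)<c\rho\}$ satisfying the same type of bounds with $\rho$ replaced by $\rho/2$ and $M$ by $CM$; apply Lemma~\ref{propagation} on $\Omega_{c\rho}$ (still with $\omega\subset\Omega\subset\Omega_{c\rho}$) to get $\|f\|_{L^\infty(\Omega_{c\rho})}\le A^*$. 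Then for every $x\in\Omega$ each relevant real segment of length $\sim\rho$ centered at $x$ lies entirely in $\Omega_{c\rho}$, the harmonic measure exponent is the same for all $x$, and the argument closes as you describe. The same extension is also needed to justify that the tube $T_{r_0}$ around $\Omega$ exists with $r_0$ a fixed multiple of $\rho$; you assert this but do not derive it from the hypothesis. With this extension step made explicit, your complex-analytic proof is correct, and somewhat more involved than the paper's elementary one, which avoids harmonic measure entirely and works directly on closed cubes so that the near-boundary issue never arises in the one-dimensional step.
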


To the best of our knowledge, the works  studying the space-time analyticity of solutions to linear parabolic equations or systems with space-time analytic coefficients over analytic domains with zero  Dirichlet lateral data or with other types of zero lateral data \cite{Friedman2,Tanabe, Friedman1,Eidelman,KinderlehrerNirenberg,Komatsu} do not in general state clearly the quantitative estimates on the analyticity of the solutions derived from the methods they use. Likely, the authors were mostly interested in the qualitative behavior.

As far as we understand, the best quantitative bound that one can get for solutions to  \eqref{E: 6081}, \eqref{6084}, \eqref{anas} and \eqref{heats2} with initial data in $L^2(\Omega)$ from the works \cite{Friedman2,Tanabe, Friedman1,Eidelman,KinderlehrerNirenberg,Komatsu} is the following:

 \vspace{0.1cm}
{\it There is $0<\rho\le 1$, $\rho=\rho(m, n,\delta)$ such that
\begin{equation}\label{E: kimalito}
|\partial_x^\alpha\partial_t^p u(x,t)|\le \rho^{-1-\frac{|\alpha|}{2m}-p}|\alpha|!\,\,p!\, t^{-\frac{|\alpha|}{2m}-p-\frac{n}{4m}}\|u_0\|_{L^2(\Omega)}, \forall \alpha\in\N^n, p\in\N,
\end{equation}
where $2m$ is the order of the evolution.}
\vspace{0.1cm}

\noindent  The arguments in \cite{Friedman2,Tanabe, Friedman1,Eidelman,KinderlehrerNirenberg,Komatsu} show that \eqref{E: kimalito} holds when the coefficients of the underlying linear parabolic equation or system are time dependent and satisfy bounds like
\begin{equation}\label{E condiciongeneral}
|\partial_x^\alpha\partial_t^p A(x,t)|\le \delta^{-1-|\alpha|-p}|\alpha|!\,p!\, ,\ \text{for all}\ \alpha\in\N^n,\ p\in\N,\ x\in\overline\Omega\ \text{and}\ t>0,
\end{equation}
for some $0<\delta\le 1$. On the other hand,  there is $\rho=\rho(n,m)$, $0<\rho\le 1$, such that the solution to
\begin{equation*}
\begin{cases}
\partial_t u+(-\Delta)^m u=0,\ &\text{in}\ \Rn\times (0,+\infty),\\
u(0)=u_0,\ &\text{in}\ \Rn,
\end{cases}
\end{equation*}
verifies
\begin{equation}\label{E: quecachondeo}
|\partial_x^\alpha\partial_t^pu(x,t)|\le \rho^{-1-\frac{|\alpha|}{2m}-p}|\alpha|!^{\frac 1{2m}}\,p!\, t^{-\frac{|\alpha|}{2m}-p-\frac{n}{4m}}\|u_0\|_{L^2(\Rn)},
\end{equation}
when $\alpha\in\N^n$ and $p\in\N$. Thus, the radius of convergence of the Taylor series expansion  of $u(\cdot, t)$ around points in $\Rn$ is $+\infty$ at all times $t>0$. The same holds when $(-\Delta)^m$ is replaced above by other elliptic operators or systems of order $2m$ with constant coefficients. These estimates follow from upper bounds of the holomorphic extension to $\C^n$ of the fundamental solutions of higher order parabolic equations or systems with constant coefficients \cite[p.15 (15);  p.47-48 Theorem 1.1 (3)]{Eidelman} and the fact that a function $f$ in $C^\infty(\Rn)$ verifies
\begin{equation*}
|\partial_x^\alpha f(0)|\le |\alpha|!^{\frac 1{2m}}\rho^{-1-|\alpha|},\ \text{for all}\ \alpha\in\N^n,\ \text{for some}\  0<\rho\le 1,
\end{equation*}
if and only if $f$ is a holomorphic in $\C^n$ and
\begin{equation*}
|f(z)|\le e^{N |z|^{\frac{2m}{2m-1}}},\ \text{for all}\ z\in\C^n\ \text{and for some}\ N\ge 1.
\end{equation*}

To prove the observability inequalities in Theorems \ref{6131} through \ref{bangbang-onecontrol} we apply Lemmas \ref{propagation} or  \ref{higherderivative} to $u(t)$ over $\Omega$ and to $u(x,\cdot)$ over roughly $(\frac t2,t)$ for $x$ in $\Omega$ and $0<t\le T$, with $u$ a solution to one of the above systems. To get the result of these applications  compatible with the {\it telescoping series} method - to make sure that a certain telescoping series converges - we need better quantifications of the space-time analyticity of the solutions to \eqref{E: 6081}, \eqref{6084}, \eqref{anas} and \eqref{heats2} than the ones in \eqref{E: kimalito} or within the available literature \cite{Friedman2,Tanabe, Friedman1,Eidelman,KinderlehrerNirenberg,Komatsu}, where the Taylor series expansion of $u(\cdot, t)$ around a point $x_0$ in $\overline\Omega$ is known to converge absolutely only at points whose distance from $x_0$ is less than a fixed constant multiple of $\root{2m}\of t$.

For our purpose, we need to find a quantification of the space-time analyticity which implies that the space-time Taylor series expansion of solutions converge absolutely over $B_\rho(x)\times ((1-\rho)t,(1+\rho)t)$, for some $0<\rho\le 1$, when $(x,t)$ is in $\overline\Omega\times (0,1]$. Thus, independently of $0<t\le 1$ in the space variable.

E. M. Landis and O. A. Oleinik developed in \cite{LandisOleinik} a reasoning which reduces the study of the strong unique continuation property within characteristic hyperplanes for solutions to {\it time independent} parabolic evolutions to its elliptic counterpart. They informed their readers \cite[p. 190]{LandisOleinik} that their argument implies the space-analyticity of solutions to {\it time-independent} linear parabolic equations from its corresponding elliptic counterpart though they did not bother to quantify their claim. Here, we quantify each step of their reasonings and get the following quantitative estimate.

\begin{lemma}\label{L: 6086}	
There is $\rho=\rho(m,n,\delta)$, $0<\rho\le1$, such that
\begin{equation*}
|\partial_x^\alpha\partial_t^p u(x,t)| \le e^{1/\rho t^{1/(2m-1)}} \rho^{-|\alpha |-p}\, |\alpha|!\, p!\,t^{-p} \|u_0\|
_{L^2(\Omega)},
\end{equation*}
when $x\in \overline\Omega$, $0<t\le 1$, $\alpha\in \N^n$, $p\ge 0$, $2m$ is the order of the evolution and $u$ verifies \eqref{E: 6081}, \eqref{6084}, \eqref{anas} or \eqref{heats2}.
\end{lemma}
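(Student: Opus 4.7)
The plan is to combine a spectral/resolvent representation of the parabolic semigroup with a quantitative analyticity-up-to-the-boundary estimate for the eigenfunctions (respectively, for the resolvent applied to $L^2$-data) of the time-independent spatial operator. The Landis--Oleinik philosophy is that the parabolic space-time analyticity is driven entirely by its elliptic counterpart; the exponent $1/(2m-1)$ in the prefactor $e^{1/\rho t^{1/(2m-1)}}$ then arises as the Legendre transform of $\lambda\mapsto \lambda^{1/(2m)}$ against $-t\lambda$.

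Write each of the four systems as $\partial_t u+Au=0$, where $A$ is the relevant time-independent spatial operator taken with the analytic boundary conditions appearing in the statement, acting on $L^2(\Omega)^\ell$. The key elliptic input is a quantitative analyticity-up-to-the-boundary estimate: there exist $\rho_0=\rho_0(m,n,\delta)>0$ and $c_0>0$ such that for every eigenpair $(\lambda_k,\phi_k)$ of $A$,
\[
|\partial_x^\alpha\phi_k(x)|\le \rho_0^{-|\alpha|}|\alpha|!\,e^{c_0\lambda_k^{1/(2m)}}\|\phi_k\|_{L^2(\Omega)},\qquad x\in\overline\Omega,\ \alpha\in\N^n.
\]
This is the Morrey--Nirenberg--Kinderlehrer--Nirenberg boundary analyticity iteration applied to $A\phi_k=\lambda_k\phi_k$ in the analytic coordinates \eqref{E:descripcionfrontera}, with the spectral parameter kept explicit in the recurrence. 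Equivalently, one first derives the ``raw'' polynomial estimate $\|\partial^\alpha\phi_k\|_{L^\infty(\overline\Omega)}\le C^{|\alpha|+1}\lambda_k^{|\alpha|/(2m)+c_1}\|\phi_k\|_{L^2}$ by iterated elliptic regularity (whose interior version is classical and whose extension to the analytic boundary is obtained by local analytic straightening), and then repackages it using the elementary Stirling inequality $(\rho\lambda^{1/(2m)})^{|\alpha|}\le|\alpha|!\,e^{\rho\lambda^{1/(2m)}}$, which trades $\lambda^{|\alpha|/(2m)}$ for $|\alpha|!\,\rho^{-|\alpha|}e^{\rho\lambda^{1/(2m)}}$ with the radius $\rho$ decoupled from $\lambda_k$ and $|\alpha|$.

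Granting this eigenfunction bound the rest is short. Expand $u_0=\sum_k c_k\phi_k$ with $\sum c_k^2=\|u_0\|_{L^2}^2$, so $\partial_x^\alpha\partial_t^p u(x,t)=\sum_k c_k(-\lambda_k)^pe^{-\lambda_k t}\partial_x^\alpha\phi_k(x)$; inserting the eigenfunction bound and applying Cauchy--Schwarz gives
\[
|\partial_x^\alpha\partial_t^p u(x,t)|\le \rho_0^{-|\alpha|}|\alpha|!\,\|u_0\|_{L^2}\Bigl(\sum_k \lambda_k^{2p}e^{-2\lambda_k t+2c_0\lambda_k^{1/(2m)}}\Bigr)^{\!1/2}.
\]
Splitting $e^{-2\lambda t}=e^{-\lambda t}\cdot e^{-\lambda t}$ and optimizing each factor in $\lambda$ separately yields $\sup_{\lambda\ge 0}\lambda^{2p}e^{-\lambda t}\le C^p(p!)^2 t^{-2p}$ from Stirling and $\sup_{\lambda\ge 0}e^{-\lambda t+2c_0\lambda^{1/(2m)}}\le e^{N/t^{1/(2m-1)}}$ from the Legendre-transform calculation of the first paragraph; Weyl's law keeps the remaining discrete counting polynomially bounded and it is absorbed into the exponential. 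Taking the square root and choosing $\rho\le\rho_0$ small enough to absorb $C^{p/2}$ into $\rho^{-p}$ yields the claim. For the non-self-adjoint systems \eqref{anas} and \eqref{heats2} the eigenfunction expansion is replaced by the Dunford representation $e^{-tA}u_0=(2\pi i)^{-1}\int_\Gamma e^{-t\lambda}(\lambda I-A)^{-1}u_0\,d\lambda$ with $\Gamma$ a sectorial contour in the resolvent set, and the analogous resolvent version of the displayed bound produces the same Laplace-type integral along $\Gamma$ and the same final estimate.

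The hard part is the quantitative eigenfunction/resolvent analyticity estimate: its qualitative form is classical \cite{Friedman2,Friedman1,KinderlehrerNirenberg,Komatsu}, but the dependence on $\lambda_k$ must be tracked carefully through the Morrey--Nirenberg boundary iteration so that $\rho_0$ is uniform in $k$ and the $L^\infty$-growth stays at most $e^{c_0\lambda_k^{1/(2m)}}$. For \eqref{anas} the Legendre--Hadamard structure is used to preserve the elliptic estimates after the analytic flattening of $\partial\Omega$, and in \eqref{heats2} the lower-order coupling $b(x)v,c(x)u$ is a bounded perturbation of the diagonal principal part and does not affect the conclusion.
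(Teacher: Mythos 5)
Your spectral strategy for the self–adjoint case \eqref{E: 6081} has the right skeleton, and in particular the Cauchy–Schwarz/optimisation step at the end (split $e^{-2\lambda t}$, optimise the two factors separately, absorb the Weyl count into the exponential) is essentially how the paper closes the estimate. But the single most important ingredient — the $\lambda$–uniform analyticity estimate for the eigenfunctions — is not proved the way you describe, and the way you describe does not work. Your proposed ``raw polynomial estimate''
\[
\|\partial^\alpha\phi_k\|_{L^\infty(\overline\Omega)}\le C^{|\alpha|+1}\lambda_k^{|\alpha|/(2m)+c_1}\|\phi_k\|_{L^2}
\]
is false: for fixed $\lambda_k$ and $|\alpha|\to\infty$ the right side grows geometrically in $|\alpha|$, whereas $\phi_k$ is only real–analytic up to $\partial\Omega$ (not entire), so the left side must exhibit $|\alpha|!$ growth. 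Consequently the ``repackaging'' via $(\rho\lambda^{1/(2m)})^{|\alpha|}\le|\alpha|!\,e^{\rho\lambda^{1/(2m)}}$ cannot be used to trade factorials for spectral powers here — the factorial is already forced. What the paper does instead is the Landis–Oleinik lift applied to the whole expansion at once: set $u(x,y,t)=\sum_j a_je^{-tw_j^{2m}}e_j(x)X_j(y)$ with $X_j$ as in \eqref{614c2}. Then $\partial_t^pu(\cdot,\cdot,t)$ solves the fixed elliptic problem $(\partial_y^{2m}+\Delta_x^m)\partial_t^pu=0$ with zero Dirichlet data, \emph{with no spectral parameter anywhere}. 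The Morrey/John analyticity-up-to-the-boundary estimate \eqref{614c3} then delivers $|\alpha|!\,\rho^{-|\alpha|}$ with $\rho=\rho(\Omega)$ uniform in $j$, and the exponential $e^{c\lambda_j^{1/(2m)}}$ is produced afterwards, purely from $\int_{-R}^R|X_j(y)|^2\,dy$. If you want a genuinely per–eigenfunction bound of the exponential form you wrote, you should prove it by the same lift, $\tilde\phi_k(x,y)=\phi_k(x)X_k(y)$, rather than by chasing $\lambda_k$ through an iterated regularity bootstrap.

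The bigger gap is the non–self–adjoint case. Your Dunford proposal $e^{-tA}u_0=(2\pi i)^{-1}\int_\Gamma e^{-t\lambda}(\lambda I-A)^{-1}u_0\,d\lambda$ needs an ``analyticity-of-the-resolvent'' estimate analogous to the eigenfunction bound, but $v_\lambda=(\lambda I-A)^{-1}u_0$ solves $Av_\lambda=\lambda v_\lambda-u_0$ with $u_0$ merely $L^2$; hence $v_\lambda$ lies in $H^{2m}$ and no higher, so there is nothing to iterate and no boundary analyticity estimate to invoke. This is not a gap that can be patched in a line. The paper's actual route for \eqref{anas} and \eqref{heats2} is substantially different: it first proves global \emph{time}-analyticity by weighted energy estimates (Lemma \ref{L: 60100}), then \emph{self-adjointifies} the system by introducing an extra space variable $x_{n+1}$ and the modified coefficients $\tilde a_{ij}^{\xi\eta}$, passes to the decomposition $\partial_t^p\mathbf u(\cdot,T)=\mathbf w_1+\mathbf w_2$, treats $\mathbf w_1$ by eigenfunction expansion of the self-adjoint extension $\mathbf S$, treats $\mathbf w_2$ by a Fourier transform in $t$ together with the auxiliary complex-frequency construction \eqref{E: unaformulaincreible} and the $\cosh$ reconstruction $\mathbf U_2$, and only at the end applies the elliptic analyticity estimate to $\mathbf U=\mathbf U_1+\mathbf U_2$. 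You should either follow that route or supply a genuinely different argument for how to upgrade the mere $H^{2m}$ regularity of the resolvent to a quantitative analyticity estimate — the sentence ``the analogous resolvent version of the displayed bound'' is where your proof currently stops.
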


It provides a better bound than \eqref{E: kimalito} in \cite{Friedman2,Tanabe, Friedman1,Eidelman,KinderlehrerNirenberg,Komatsu} and it is good, as described above, for our applications to Control Theory. Also observe that Lemma \ref{L: 6086} is somehow in between \eqref{E: kimalito} and \eqref{E: quecachondeo}, since
\begin{equation*}
\sup_{t>0}\, {t^{-\frac{|\alpha|}{2m}}} e^{1/\rho t^{1/(2m-1)}}\lesssim |\alpha|!^{1-\frac{1}{2m}},\ \text{for}\ \alpha\in\N^n.
\end{equation*}

Lemma \ref{L: 6086} also holds for solutions to {\it time independent} linear parabolic equations associated to elliptic and possibly non self-adjoint equations of order $2m$ with analytic coefficients. We do not complete the details here. The readers can obtain such quantitative estimates from \cite{LandisOleinik} and with arguments similar to those in Section \ref{S:2}.

We believe that Lemma \ref{L: 6086} holds when the coefficients of the parabolic evolution are time dependent and verify  \eqref{E condiciongeneral} but so far we do not know how to prove it.

The paper is organized as follows: Section \ref{S:2} proves Lemma \ref{L: 6086}; Section \ref{S:3} shows the results related to higher order parabolic equatons, Section \ref{S:4} verifies the ones for systems and Section \ref{S:5} recalls some applications of Theorems \ref{6131}, \ref{6106}, \ref{nball} and \ref{bangbang-onecontrol}  to Control Theory. One can find analogous  applications of Theorems \ref{614g3}, \ref{6090}, \ref{614g2} and \ref{614g1}.

\section{Proof of Lemma \ref{L: 6086}}\label{S:2}

We first prove Lemma \ref{L: 6086} for solutions to \eqref{E: 6081}. Other time-independent parabolic evolutions associated to {\it self-adjoint} elliptic  scalar operators or systems with analytic coefficients are treated similarly.
\begin{proof}[Proof of Lemma \ref{L: 6086} for \eqref{E: 6081}]
 Let $\{e_j\}_{j\geq1}$ and $\{w_j^{2m}\}_{j\geq1}$ be respectively the sets of $L^2(\Omega)$-normalized eigenfunctions  and eigenvalues for
$(-\Delta)^m$ with zero lateral Dirichlet boundary conditions; i.e.,
\begin{equation*}
\begin{cases}
(-1)^m\Delta^me_j-w_j^{2m}e_j=0,\ &\text{in}\ \Omega,\\
e_j=\nabla e_j=\cdots=\nabla^{m-1}e_j=0,\ &\text{on}\ \p\Omega.
\end{cases}
\end{equation*}
Take $u_0=\sum_{j\geq1}a_je_j$, with $\sum_{j\geq1}a_j^2<+\infty$ and define
\begin{equation*}
u(x,y,t)=\sum_{j\geq 1}a_je^{-tw_j^{2m}}e_j(x)X_j(y),\ \text{for}\ x\in
\overline{\Omega},\ y\in\mathbb{R}\ \text{and}\ t>0,
\end{equation*}
with
\begin{equation}\label{614c2}
X_j(y)=
\begin{cases}
e^{w_jy},\  &\text{when $m$ is odd},\\
e^{w_jye^{\frac{\pi i}{2m}}},\  &\text{when $m$ is even},\\
\end{cases}
\end{equation}
where $i=\sqrt{-1}$. Then, $u(x,t)=u(x,0,t)$, solves \eqref{E: 6081} with initial datum $u_0$ and
\begin{equation}\label{611}
 \p_t^p u(x,y,t)=\sum_{j\geq 1}\left(-1\right)^p a_j\, w_j^{2mp} e^{-tw_j^{2m}}e_j(x)X_j(y),\;\;x\in\overline{\Omega},\
 y\in\mathbb{R}.
\end{equation}
Moreover,
\begin{equation*}\label{614c1}
\begin{cases}
(\p_y^{2m}+\Delta_x^{m})(\p_t^p u(\cdot,\cdot,t))=0,\ &\text{in}\ \Omega\times\mathbb{R},\\
\p_t^p u(\cdot,\cdot,t)=\nabla(\p_t^p u(\cdot,\cdot,t))
=\cdots=\nabla^{m-1}(\p_t^p u(\cdot,\cdot,t))=0,\ &\text{on}\
\p\Omega\times\mathbb{R}.
\end{cases}
\end{equation*}
Because $\p\Omega$ is analytic, the quantitative estimates on the analyticity up to the boundary for solutions to elliptic equations with analytic coefficients and null-Dirichlet data over nearby  analytic boundaries  (See \cite[Ch. 5]{Morrey} or \cite[Ch. 3]{FJohn2}), show that there is $\rho=\rho(\Omega)$, $0<\rho\le 1$, such that for $x_0$ in $\overline{\Omega}$ and $0<R\le 1$
\begin{multline}\label{614c3}
\|\p_x^\alpha\p_t^p u(\cdot,\cdot, t)\|_{L^\infty(B_{R/2}(x_0,0)\cap\Omega\times\mathbb{R})}\\
\leq |\alpha|!\, \rho^{-1-|\alpha|} R^{-|\alpha|}
\left(\text{\rlap|{$\int_{B_{R}(x_0,0)\cap\Omega\times\R}$}}
|\partial^p_t u(x,y,t)|^2\,dxdy\right)^{\frac 12}.
\end{multline}
Because
\begin{equation}\label{614c4}
\int_{B_{R}(x_0,0)\cap\Omega\times\mathbb{R}}|\p_t^p u(x,y,t)|^2\,dxdy
\leq
\int_{-R}^{R}\int_{\Omega}|\p_t^p u(x,y,t)|^2\,dxdy,
\end{equation}
we have from \eqref{614c2}, \eqref{611} and the orthogonality of $\{e_j\}_{j\geq1}$ in $L^2(\Omega)$ that
\begin{equation*}
\begin{split}
&\int_{\Omega}|\p_t^p u(x,y,t)|^2\,dx=\int_\Omega
\Big|\sum_{j\geq 1}\left(-1\right)^p a_j\, w_j^{2mp} e^{-tw_j^{2m}}e_j(x)X_j(y)\Big|^2\,dx\\
&=\sum_{j\geq 1}a_j^2\, w_j^{4mp} e^{-2tw_j^{2m}} |X_j(y)|^2\leq \sum_{j\geq 1}a_j^2\, w_j^{4mp} e^{-2tw_j^{2m}} e^{2w_j|y|}\\
&\leq \max_{j\geq1}\big\{w_j^{4mp} e^{-tw_j^{2m}}\big\}
\max_{j\geq1}\big\{e^{-tw_j^{2m}+2w_j|y|}\big\}\sum_{j\geq1}a_j^2.
\end{split}
\end{equation*}
Next, from Stirling's formula
\begin{equation*}
\max_{x\ge0}x^{2p}e^{-xt}=t^{-2p}\left(2p\right)^{2p}
e^{-2p}\lesssim \left(\frac{2}{t}\right)^{2p}p!^2
,\;\;\mbox{when}\;\; t>0\ \text{and}\ p\ge 0,
\end{equation*}
and the fact that
\begin{equation*}
\max_{x\ge 0}e^{-tx^{2m}+2x|y|}= e^{\left(2-\frac 1m\right)\left(\frac{|y|}{mt}\right)^{\frac1{2m-1}}},\ \text{when}\ t>0,\ m\ge 1,
\end{equation*}
we get that
\begin{equation*}
\int_{\Omega}|\p_t^p u(x,y,t)|^2\,dx
\lesssim\left(\frac{2}{t}\right)^{2p}p !^2 e^{2|y|\left(
\frac{|y|}{mt}\right)^{\frac{1}{2m-1}}}
\sum_{j\geq1}a_j^2.
\end{equation*}
This, along with \eqref{614c3}, \eqref{614c4} and the choice of $R=1$ show that
\begin{equation*}
\|\p_x^\alpha\p_t^p u(\cdot,\cdot, t)\|_{L^\infty(B_{1/2}(x_0,0)\cap\Omega\times\mathbb{R})}\le N|\alpha|!\,p !\,\rho^{-|\alpha|}
\left(\frac{2}{t}\right)^{p}\, e^{Nt^{-\frac{1}{2m-1}}}
\left(\sum_{j\geq1}a_j^2\right)^{1/2}.
\end{equation*}
In particular,
\begin{equation*}\label{6069}
|\p_x^\alpha\p_t^p u(x, t)|
\leq e^{1/\rho t^{1/(2m-1)}}\rho^{-|\alpha|-p}|\alpha|!\, p !\, t^{-p}\|u_0\|_{L^2(\Omega)}.
\end{equation*}
\end{proof}
\begin{remark}\label{R:8}
The last proof extends to the case $m\ge 2$ its analog  for $m=1$ in \cite[Lemma 6]{AEWZ}. There the authors used that the Green's function over $\Omega$ for $\Delta-\partial_t$ with zero lateral Dirichlet conditions has Gaussian upper bounds. The later shows that one can derive  \cite[Lemma 6]{AEWZ} without knowledge of upper bounds for the Green's function with lateral Dirichlet conditions of the parabolic evolution.
\end{remark}
We now give a proof of Lemma \ref{L: 6086} for solutions to the systems \eqref{anas} and \eqref {heats2}. Other time-independent parabolic evolutions associated to possibly {\it non self-adjoint} elliptic  scalar equations with analytic coefficients over $\overline\Omega$ are treated similarly.
\begin{proof}[Proof of Lemma \ref{L: 6086} for \eqref{anas}]
The proof of Lemma 3 requires first global bounds on the time-analyticity of the solutions, Lemma \ref{L: 60100} below. Of course, there is  plenty of literature on the time-analyticity of solutions to abstract evolutions \cite{KatoTanabe,HKomatsu,Massey,Tanabe2} but we give here a proof of Lemma \ref{L: 60100} because it serves better our purpose.
\begin{lemma}\label{L: 60100}	
There is $\rho=\rho(\delta)$, $0<\rho\le1$, such that
\begin{equation*}
t^p\|\partial_t^p \mathbf u(t)\|_{L^2(\Omega)}+t^{p+\frac 12}\|\nabla\partial_t^p \mathbf u(t)\|_{L^2(\Omega)}\le \rho^{-1-p} p!\,\|\mathbf u_0\|_{L^2(\Omega)},
\end{equation*}
when $p\ge 0$, $0< t\le 2$ and $\mathbf u$ verifies \eqref{anas} or \eqref{heats2}.
\end{lemma}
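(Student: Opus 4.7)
The plan is to exploit that the coefficients in \eqref{anas} and \eqref{heats2} are time-independent, so $\partial_t^p\mathbf u$ solves the same system; hence $\partial_t^p\mathbf u(t)=\mathbf L^p e^{t\mathbf L}\mathbf u_0$. The desired $p!\,t^{-p}$ growth should then come out of Cauchy's integral formula applied to the holomorphic semigroup $\{e^{z\mathbf L}\}$ on a disk around $t$ of radius $\sim t$ lying inside a complex sector in which the solution extends holomorphically in the time variable.

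I would proceed in three steps. First, I would verify that $\mathbf L$ with zero Dirichlet data generates a bounded analytic semigroup on $L^2(\Omega)^\ell$. Under \eqref{E: condicon5} and \eqref{E: condici�n4}, the sesquilinear form
\begin{equation*}
a(\mathbf u,\mathbf v)=\int_\Omega\bigl(a_{ij}^{\xi\eta}\partial_{x_j}u^\eta\,\overline{\partial_{x_i}v^\xi}-b_j^{\xi\eta}\partial_{x_j}u^\eta\,\overline{v^\xi}-c^{\xi\eta}u^\eta\,\overline{v^\xi}\bigr)\,dx
\end{equation*}
on $H^1_0(\Omega)^\ell$ has a symmetric coercive principal part that dominates the bounded lower order terms, so after a shift $K=K(\delta)$ the form becomes sectorial in Kato's sense. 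Kato's form theorem then yields $\theta=\theta(\delta)>0$, $M=M(\delta)\ge 1$, and a holomorphic extension $\mathbf u(z)\triangleq e^{z\mathbf L}\mathbf u_0$ on $\Sigma_\theta=\{z\in\C^*:|\arg z|<\theta\}$ satisfying $\|\mathbf u(z)\|_{L^2}\le M e^{K\,\mathrm{Re}(z)}\|\mathbf u_0\|_{L^2}$. The same reasoning applies to \eqref{heats2}.

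Second, for $0<t\le 2$ and $r=\tfrac12\sin\theta$, the disk $\{|z-t|\le rt\}$ lies inside $\Sigma_\theta\cap\{\mathrm{Re}(z)\le 3t\}$, so Cauchy's formula
\begin{equation*}
\partial_t^p\mathbf u(t)=\frac{p!}{2\pi i}\oint_{|z-t|=rt}\frac{\mathbf u(z)}{(z-t)^{p+1}}\,dz
\end{equation*}
combined with the semigroup bound gives $\|\partial_t^p\mathbf u(t)\|_{L^2}\le M e^{6K}p!\,(rt)^{-p}\|\mathbf u_0\|_{L^2}$, which is the first term of the lemma for a suitable $\rho=\rho(\delta)$. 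For the gradient term I would invoke the G\aa rding-type inequality $\delta\|\nabla\mathbf v\|_{L^2}^2\le\langle-\mathbf L\mathbf v,\mathbf v\rangle_{L^2}+C(\delta)\|\mathbf v\|_{L^2}^2$ applied to $\mathbf v=\partial_t^p\mathbf u(t)\in H^1_0(\Omega)^\ell\cap D(\mathbf L)$ and use $\mathbf L\partial_t^p\mathbf u=\partial_t^{p+1}\mathbf u$ together with the $L^2$-bounds at levels $p$ and $p+1$ to get
\begin{equation*}
t^{2p+1}\|\nabla\partial_t^p\mathbf u(t)\|_{L^2}^2\lesssim t^{p+1}\|\partial_t^{p+1}\mathbf u(t)\|_{L^2}\cdot t^p\|\partial_t^p\mathbf u(t)\|_{L^2}+t\,\bigl(t^p\|\partial_t^p\mathbf u(t)\|_{L^2}\bigr)^2\lesssim\rho^{-2-2p}(p!)^2\|\mathbf u_0\|_{L^2}^2,
\end{equation*}
after a harmless readjustment of $\rho$. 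Taking square roots and combining with the first step closes the lemma.

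The main obstacle is Step 1 in the non-self-adjoint case of \eqref{anas} when only the Legendre--Hadamard condition \eqref{E:Hadamard2} is available: one cannot use a coercive symmetric principal part directly, so sectoriality must be extracted via G\aa rding's inequality on $H^1_0(\Omega)^\ell$, and one has to check carefully that the resolvent estimates, and hence the constants $M$, $K$, $\theta$, depend on the coefficients only through the single analyticity parameter $\delta$ appearing in \eqref{E: condicion3} and \eqref{E: condici�n4}.
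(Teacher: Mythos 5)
Your proposal is correct, but it takes a genuinely different route from the paper. The paper proves Lemma~\ref{L: 60100} by an elementary direct energy argument: it multiplies the equation \eqref{E: unauecial} satisfied by $\partial_t^p\mathbf u$ first by $t^{2p+2}\partial_t^{p+1}\mathbf u$ and then by $t^{2p+1}\partial_t^p\mathbf u$, integrates by parts over $\Omega\times(0,T)$, and combines the two resulting weighted energy identities (\eqref{E: primeraacotacion}, \eqref{E: segundaacotacion}) to produce the single-step recursion \eqref{E: iteracion}, $\|t^{p+1}\partial_t^{p+1}\mathbf u\|_{L^2(\Omega_T)}\le \rho^{-1}(p+1)\|t^p\partial_t^p\mathbf u\|_{L^2(\Omega_T)}$, which it then iterates; the gradient bound is read off from the same identities. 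You instead pass to abstract holomorphic-semigroup theory: after a shift, Kato's form theorem yields a sector $\Sigma_\theta$ on which $e^{z\mathbf L}$ is holomorphic with a uniform bound, and Cauchy's integral formula on the disk $|z-t|=\frac12 t\sin\theta$ gives the $p!\,(rt)^{-p}$ bound in one stroke; G\aa rding's inequality applied to $\mathbf v=\partial_t^p\mathbf u(t)$ together with $\mathbf L\partial_t^p\mathbf u=\partial_t^{p+1}\mathbf u$ then recovers the gradient term. Both arguments are valid and yield constants depending only on $\delta$ (and $\Omega$, which the paper suppresses in the statement but uses). What the paper's approach buys is self-containedness and transparent dependence of constants (no appeal to abstract sectoriality theorems, and the exact weights $t^{2p+1}$, $t^{2p+2}$ are dictated by the recursion); what your approach buys is brevity and a conceptual explanation of the $p!\,t^{-p}$ growth as the derivative bound of a function holomorphic in a cone. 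Your final caveat about the Legendre--Hadamard case is well placed: indeed the paper's proof is written only under \eqref{E: condici�n4}, and the LH case is asserted separately; in both approaches one needs the system G\aa rding inequality rather than pointwise coercivity there, and this requires continuity of the leading coefficients (granted here by \eqref{E: condicion3}). One small bookkeeping slip in your write-up: the factor $\sqrt{(p+1)!\,p!}$ appearing after taking square roots in the gradient estimate is $\sqrt{p+1}\cdot p!$, not $p!$, and it needs to be absorbed by shrinking $\rho$ (e.g.\ replacing $\rho$ by $\rho/2$ uses $\sqrt{p+1}\le 2^{p}$); this is harmless but should be said explicitly.
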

\begin{proof}[Proof of Lemma \ref{L: 60100}] Let $\mathbf u$ solve \eqref{anas}. When $\mathbf u_0$ is in $C^\infty_0(\Omega)$, the solution $\mathbf u$ to \eqref{anas} is in $C^\infty(\overline\Omega\times [0,+\infty))$ \cite{Friedman1}. By the local energy inequality for \eqref{anas} there is $\rho=\rho(\delta)>0$ such that
\begin{equation*}
\sup_{0\le t\le 2}\|\mathbf u(t)\|_{L^2(\Omega)}\le \rho^{-1}\|\mathbf u_0\|_{L^2(\Omega)}.
\end{equation*}
Multiply first the equation satisfied by $\partial_t^p\mathbf u$,
\begin{equation}\label{E: unauecial}
\begin{cases}
\partial_t^{p+1}\mathbf u-\mathbf L\partial_t^p \mathbf u=0,\ &\text{in}\ \Omega\times (0,+\infty),\\
\partial_t^p\mathbf u=0,\ &\text{in}\ \partial\Omega\times (0,+\infty),
\end{cases}
\end{equation}
by $t^{2p+2}\partial_t^{p+1}\mathbf u$, after by $t^{2p+1}\partial_t^p\mathbf u$ and integrate by parts over $\Omega_T=\Omega\times (0,T)$, $0<T\le 2$, the two resulting identities. These, standard energy methods, H\"older's inequality together with \eqref{E: condicion3} \eqref{E: condicon5} and \eqref{E: condici�n4} imply that
\begin{multline}\label{E: primeraacotacion}
T^{p+1}\|\nabla\partial_t^p\mathbf u(T)\|_{L^2(\Omega)}+\|t^{p+1}\partial_t^{p+1}\mathbf u\|_{L^2(\Omega_T)}\\\lesssim \|t^{p}\partial_t^{p}\mathbf u\|_{L^2(\Omega_T)}
+\left(p+1\right)^\frac 12\|t^{p+\frac 12}\partial_t^{p}\nabla\mathbf u\|_{L^2(\Omega_T)},
\end{multline}
\begin{equation}\label{E: segundaacotacion}
T^{p+\frac 12}\|\partial_t^p\mathbf u(T)\|_{L^2(\Omega)}+\|t^{p+\frac 12}\partial_t^{p}\nabla\mathbf u\|_{L^2(\Omega_T)}\lesssim \left(p+1\right)^\frac 12\|t^{p}\partial_t^{p}\mathbf u\|_{L^2(\Omega_T)}.
\end{equation}
Thus,
\begin{equation}\label{E: iteracion}
\|t^{p+1}\partial_t^{p+1}\mathbf u\|_{L^2(\Omega_T)}\le \rho^{-1}\left(p+1\right)\|t^{p}\partial_t^{p}\mathbf u\|_{L^2(\Omega_T)},\ \text{for}\ p\ge 0
\end{equation}
and the iteration of \eqref{E: iteracion} and the local energy inequality show that
\begin{equation*}
\|t^p\partial_t^p \mathbf u(t)\|_{L^2(\Omega_T)}\le \rho^{-1-p} p!\,\sqrt T\,\|\mathbf u_0\|_{L^2(\Rn)},\ \text{for}\ p\ge 0.
\end{equation*}
This combined with \eqref{E: primeraacotacion} and \eqref{E: segundaacotacion} implies Lemma \ref{L: 60100}.

\end{proof}
The next step is to show that we can realize $\mathbf u(x,t)$ and all its partial derivatives with respect to time as  functions with one more space variable, say $x_{n+1}$, which satisfy in the $(X,t)=(x,x_{n+1}, t)$ coordinates a {\it time-independent} parabolic evolution associated to a {\it self-adjoint} elliptic system with analytic coefficients over $\Omega\times (-1,1)\times (0,+\infty)$ and with zero boundary values over $\partial\Omega\times(-1,1)\times (0,+\infty)$. To accomplish it, consider the system $\mathbf S=\left(S^1,\dots,S^\ell\right)$, which  acts on functions $\mathbf w$ in $C^\infty(\Omega\times\R,\R^\ell)$, $\mathbf w=\left(w^1,\dots,w^\ell\right)$, as
\begin{multline*}
S^{\xi}\mathbf w=\sum_{i,j=1}^{n+1}\sum_{\eta=1}^{\ell}\partial_{x_i}\left(\tilde a_{ij}^{\xi\eta}(X)\partial_{x_j}w^\eta\right)\\+\sum_{\eta=1}^\ell
\left[\partial_{x_{n+1}}\left(x_{n+1}c^{\xi\eta}(x)w^{\eta}\right)-
x_{n+1}c^{\eta\xi}(x)\partial_{x_{n+1}}w^\eta\right],
\end{multline*}
for $\xi =1,\dots, \ell$, where for $\xi, \eta= 1,\dots,\ell$,
\begin{equation*}
\tilde a_{ij}^{\xi\eta}(X)=
\begin{cases}
a_{ij}^{\xi\eta}(x),\ &\text{for}\  i,j=1,\dots,n,\\
x_{n+1}b^{\xi\eta}_j(x),\ &\text{for}\ i=n+1,\ j=1,\dots,n\\
x_{n+1}b^{\eta\xi}_i(x),\ &\text{for}\ i=1,\dots,n,\ j=n+1,\\
M\delta_{\xi\eta},\ &\text{for}\ i=j=n+1.
\end{cases}
\end{equation*}
Set $Q_R=\Omega\times\left(-R,R\right)$ and $\partial_lQ_R=\partial\Omega\times\left(-R,R\right)$, the \lq\lq lateral\rq\rq boundary of $Q_R$. From \eqref{E: condicon5}, $\mathbf S$ is a self-adjoint system and for large $M=M(\delta)$, the matrices of coefficients $\tilde a_{ij}^{\xi\eta}$ verify one the ellipticity conditions \eqref{E: condici�n4} or \eqref{E:Hadamard2} with $\delta$ replaced by $\frac \delta 2$ over $Q_{1}$ when the original coefficients $a^{\xi\eta}_{ij}$ verify respectively \eqref{E: condici�n4} or \eqref{E:Hadamard2}. Choosing $M$ larger if it is necessary, we may assume that
\begin{equation}\label{E: desigualdad coercividad}
\tfrac\delta 2\|\nabla_X\boldsymbol\varphi\|_{L^2(Q_{1})}^2\le -\int_{Q_{1}}\mathbf S\boldsymbol\varphi\cdot\boldsymbol\varphi\,dX\le \tfrac 2\delta\|\nabla_X\boldsymbol\varphi\|_{L^2(Q_{1})}^2,
\end{equation}
when $\boldsymbol\varphi$ is in $W^{1,2}_0(Q_{1})$ and $\nabla_X= \left(\nabla_x,\partial_{x_{n+1}}\right)$.  Also, $\mathbf S\boldsymbol\varphi (X)=\mathbf L\mathbf v(x)$, when $\boldsymbol\varphi(X)=\mathbf{v}(x)$ and for $\mathbf w(X,t)=\partial_t^p\mathbf u(x,t)$, $p\ge 0$, we have
\begin{equation*}
\begin{cases}
\partial_t\mathbf w-\mathbf S\mathbf w=0,\ &\text{in}\ Q_{1}\times (0,+\infty),\\
\mathbf w=0,\ &\text{in}\ \partial_lQ_{1}\times (0,+\infty).
\end{cases}
\end{equation*}
The symmetry, coerciveness and compactness of the operator mapping functions $\mathbf f$ in $L^2(Q_{1})^m$ into the unique solution $\boldsymbol{\varphi}$ in $W^{1,2}_0(Q_{1})^m$ to
\begin{equation*}
\begin{cases}
\mathbf{S}\boldsymbol\varphi=\mathbf f,\ &\text{in}\ Q_{1},\\
\boldsymbol\varphi=0,\ &\text{in}\ \partial Q_{1}
\end{cases}
\end{equation*}
\cite[Prop. 2.1]{Giaquinta} gives the existence of a complete orthogonal system $\{\mathbf e_k\}$ in $L^2(Q_{1})^m$ of eigenfunctions, $\mathbf e_k=(e_k^1,\dots,e_k^m)$, satisfying
\begin{equation*}
\begin{cases}
\mathbf S\mathbf{e}_k+\omega_k^2\,\mathbf e_k=0,\ &\text{in}\ Q_{1},\\
\mathbf e_k=0,\ &\text{in}\ \partial Q_{1},
\end{cases}
\end{equation*}
with eigenvalues $0<\omega_1^2\le\dots\omega_k^2\le\dots$. Fix $0<T\le 1$ and for $(X,t)$ in $Q_{1}\times (\frac T2,+\infty)$ consider
\begin{equation*}
\mathbf w_1(X,t)=\sum_{j\geq1}a_je^{-w^2_j(t-T/2)}\mathbf e_j(X),
\end{equation*}
with
\begin{equation}\label{def:a}
a_j=\int_{Q_{1}}\partial_t^p\mathbf u(x,\tfrac T2)\mathbf e_j(X)\,dX.
\end{equation}
Clearly, $\mathbf w_1(X,\tfrac T2)=\partial_t^p\mathbf u(x,\tfrac T2)$ in $Q_{1}$ and by the multiplications of the equation verified by $\mathbf w_1$, first by $\mathbf w_1$, after by $\partial_t\mathbf w_1$ and the integration by parts of the resulting identities over $Q_{1}\times (\frac T2,\tau)$, for $\frac T2<\tau\le 2T$, we get
\begin{multline*}
\|\mathbf w_1\|_{L^\infty(\frac T2, 2T; L^2(Q_{1}))}+\sqrt T\,\|\nabla_X\mathbf w_1\|_{L^\infty(\frac T2, 2T; L^2(Q_{1}))}\\
\lesssim \|\mathbf \partial_t^p\mathbf u(\tfrac T2)\|_{L^2(\Omega)}+ \sqrt T\,\|\nabla\mathbf \partial_t^p\mathbf u(\tfrac T2)\|_{L^2(\Omega)}.
\end{multline*}
From Lemma \ref{L: 60100}
\begin{equation}\label{E: estavade necesaia}
\|\mathbf w_1\|_{L^\infty(\frac T2, 2T; L^2(Q_{1}))}+\sqrt T\,\|\nabla_X\mathbf w_1\|_{L^\infty(\frac T2, 2T; L^2(Q_{1}))}\le \sqrt T\,H(p,T,\rho),
\end{equation}
with
\begin{equation}\label{E: una definici—n}
H(p,T,\rho)=\rho^{-1-p}p!\,T^{-p-\frac12}\|\mathbf u_0\|_{L^2(\Omega)},\ 0<\rho\le 1,\ \rho=\rho(\delta).
\end{equation}

Let $\mathbf w_2$ be the solution to
\begin{equation*}
\begin{cases}
\partial_t\mathbf w_2-\mathbf S\mathbf w_2=0,\ &\text{in}\ Q_1\times (\frac T2,+\infty),\\
\mathbf w_2 =\eta(t)\left(\partial_t^p\mathbf u-\mathbf w_1\right),\ &\text{on}\ \partial Q_1\times (\frac T2,+\infty),\\
\mathbf w_2(0)= \mathbf 0,\ &\text{in}\ Q_1,
\end{cases}
\end{equation*}
where $0\leq\eta\leq1$ verifies $\eta=1$, for $-\infty <t\le T$,
$\eta=0$, for $\frac{3T}2\le t<+\infty$ and $|\partial_t\eta|\le\frac1T$. Observe that because $\partial_t^p\mathbf u=0$ on $\partial\Omega\times (0,+\infty)$, $\partial_lQ_1\subset \partial Q_{1}$ and $\mathbf w_1=0$ on $\partial Q_{1}$, then $\mathbf w_2=0$ on $\partial_lQ_{1}$.

The auxiliary function, $\mathbf v=\mathbf w_2-\eta(t)(\partial_t^p\mathbf u-\mathbf w_1)$ satisfies
\begin{equation*}
\begin{cases}
\partial_t\mathbf v-\mathbf S\mathbf v=-(\partial_t^p\mathbf u-\mathbf w_1)\partial_t\eta\;\;&\text{in}\ Q_1\times(T/2,+\infty),\\
\mathbf v=0\;\;&\text{on}\;\;\partial Q_1\times(T/2,+\infty),\\
\mathbf v(T/2)=0\;\;&\text{in}\ Q_1
\end{cases}
\end{equation*}
and clearly $v\equiv0$ in $Q_1\times[\frac T2,T]$. In particular,
\begin{equation}\label{E: idntidadmaravilosa}
\partial_t^p\mathbf u(x,T)=\mathbf w_1(X,T)+\mathbf w_2(X,T),\ \text{for $X$ in}\ Q_1.
\end{equation}
By the parabolic regularity
\begin{equation*}
\|\mathbf v\|_{L^\infty(T/2,2T;L^2(Q_1))}+\|\nabla_X \mathbf v\|_{L^\infty(T/2,2T;L^2(Q_1))}\lesssim\|(\partial_t^p\mathbf u-\mathbf w_1)
\partial_t\eta\|_{L^2(\frac T2,2T;L^2(Q_1))}
\end{equation*}
and from Lemma \ref{L: 60100} and \eqref{E: estavade necesaia}
\begin{equation*}
\|\mathbf v\|_{L^\infty(T/2,2T;L^2(Q_1))}+\|\nabla_X \mathbf v\|_{L^\infty(T/2,2T;L^2(Q_1))}\lesssim H(p,T,\rho).
\end{equation*}
Because $\mathbf w_2=\mathbf v+\eta(t)\left(\partial_t^p\mathbf u-\mathbf w_1\right)$, we get from the latter, Lemma \ref{L: 60100} and \eqref{E: estavade necesaia}
\begin{equation}\label{E: unaacotacionmas}
\|\mathbf w_2\|_{L^\infty(\frac T2, 2T; L^2(Q_1))}+\|\nabla_X\mathbf w_2\|_{L^\infty(\frac T2, 2T; L^2(Q_1))}\lesssim H(p,T,\rho).
\end{equation}
By separation of variables,
\begin{equation*}
\mathbf w_2(X,t)=\sum_{j=1}^{+\infty}c_je^{-\omega_j^2\left(t-2T\right)}\mathbf e_j(X),\ \text{with}\  c_j=\int_{Q_1}\mathbf w_2(X,2T)\mathbf e_j(X)\,dX,
\end{equation*}
for $t\ge 2T$. From \eqref{E: desigualdad coercividad}, $\omega_1^2\ge \frac\delta 2$ and
\begin{equation}\label{E: decay}
\|\mathbf w_2(t)\|_{L^2(Q_1)}\le  e^{-\frac\delta 2\left(t-2T\right)}\|\mathbf w_2(2T)\|_{L^2(Q_1)},\ \text{when}\ t\ge 2T.
\end{equation}
Also,
\begin{equation*}
-\int_{Q_1}\mathbf S\mathbf w_2(t)\cdot\mathbf w_2(t)\,dX=-\int_{Q_1}\partial_t\mathbf w_2(t)\cdot\mathbf w_2(t)\, dX=\sum_{j=1}^{+\infty}c_j^2\omega_j^2e^{-2\omega_j^2\left(t-2T\right)},
\end{equation*}
for $t\ge 2T$ and the last identity and \eqref{E: desigualdad coercividad} imply that
\begin{equation*}
\|\nabla_X\mathbf w_2(t)\|_{L^2(Q_1)}\le  e^{-\frac \delta 2\left(t-2T\right)}\|\nabla_X\mathbf w_2(2T)\|_{L^2(Q_1)},\ \text{when}\ t\ge 2T.
\end{equation*}
From \eqref{E: unaacotacionmas},   \eqref{E: decay} and the last inequality
\begin{equation}\label{E: otra acotacionqueva}
\|\mathbf w_2(t)\|_{L^2(Q_1)}+  \|\nabla_X\mathbf w_2(t)\|_{L^2(Q_1)}\lesssim e^{-\frac\delta 2\left(t-2T\right)^+} H(p,T,\rho)
\end{equation}
and we may extend $\mathbf w_2$ as zero for $t\le\frac T2$. Set
\begin{equation*}
\widehat{\mathbf w}_2(X,\mu)=\frac 1{\sqrt{2\pi}}\int_{\frac T2}^{+\infty}e^{-i\mu t}\mathbf w_2(X,t)\,dt = \frac 1{\sqrt{2\pi}}\int_{-\infty}^{+\infty}e^{-i\mu t}\mathbf w_2(X,t)\,dt,
\end{equation*}
for $X$ in $Q_1$ and $\mu$ in $\R$. From \eqref{E: otra acotacionqueva}
\begin{equation}\label{E: acotacion temeraria}
\|\widehat{\mathbf w}_2(\mu)\|_{L^2(Q_1)}+\|\nabla_X\widehat{\mathbf w}_2(\mu)\|_{L^2(Q_1)}\lesssim H(p,T,\rho),\ \text{for all}\ \mu\in\R.
\end{equation}
Moreover,
\begin{equation*}
\begin{cases}
\mathbf S\widehat{\mathbf w}_2(X,\mu)-i\mu\widehat{\mathbf w}_2(X,\mu)=0,\ &\text{in}\ Q_1,\\
\widehat{\mathbf w}_2(X,\mu)=0,\ &\text{in}\ \partial_lQ_1,
\end{cases}
\quad\text{for each}\ \mu\in\R.
\end{equation*}
For $\mu\neq 0$, define
\begin{equation}\label{E: unaformulaincreible}
\mathbf v_2(X,\zeta,\mu)=e^{i\zeta\sqrt{|\mu|}}\widehat{\mathbf w}_2(X,\mu),\;\;\zeta\in\mathbb R.
\end{equation}
 Then,
\begin{equation*}
\begin{cases}
\mathbf S \mathbf v_2(X,\zeta,\mu)+i\sgn{(\mu)}\,\partial^2_\zeta\mathbf v_2(X,\zeta,\mu)=0,\ &\text{in}\ Q_1\times\R,\\
\mathbf v_2(X,\zeta,\mu)=0,\ &\text{in}\ \partial_lQ_1\times\R.
\end{cases}
\end{equation*}
As for the equation verified by $\mathbf v_2$, it is elliptic with complex coefficients and its coefficients are independent of the $\zeta$-variable. These and the fact that $\partial^k_{\zeta}\mathbf v_2=0$ on $\partial_lQ_1\times\R$ imply by energy methods \cite{MorreyNirenberg} ($k$ times localized Cacciopoli's inequalities) that
\begin{equation*}\label{E: iteracionclasica}
\|\partial_\zeta^{j+1}\mathbf v_2\|_{L^2(Q_{1-\frac{j+1}{2k}}\times(-1+\frac{j+1}{2k}),1-\frac{j+1}{2k}))}
\le\tfrac k{\rho}\,\|\partial_\zeta^{j}\mathbf v_2\|_{L^2(Q_{1-\frac{j}{2k}}\times(-1+\frac{j}{2k},1-\frac{j}{2k}))},
\end{equation*}
for $j=0,\dots,k-1$, $k\ge 1$, and for some $0<\rho\le 1$, $\rho=\rho(\delta)$. Its iteration gives
\begin{equation*}
\|\partial_\zeta^k\mathbf v_2\|_{L^2(Q_{\frac 12}\times(-\frac 12,\frac 12))}\le k!\,\rho^{-k}\|\mathbf v_2\|_{L^2(Q_{1}\times(-1,1))},\ \text{for}\ k\ge 1,
\end{equation*}
and from \eqref{E: acotacion temeraria} and \eqref{E: unaformulaincreible}
\begin{equation}\label{E: otra cojonudaacota}
\|\partial_\zeta^k\mathbf v_2\|_{L^2(Q_{\frac 12}\times(-\frac 12,\frac 12))}\lesssim k!\,\rho^{-k}H(T,p,\rho),\ \text{for}\ k\ge 1.
\end{equation}
For $\psi$ in $L^2(Q_{\frac 12})$, set $\gamma(\zeta)=\int_{Q_{\frac 12}}\mathbf v_2(X,\zeta,\mu)\overline\psi(X)\,dX$. Then, from \eqref{E: acotacion temeraria}, \eqref{E: unaformulaincreible} and \eqref{E: otra cojonudaacota}
\begin{equation*}
\|\gamma^{(k)}\|_{L^\infty(-\frac 12,\frac 12)}\lesssim \rho^{-k}k!\,H(T,p,\rho)\,\|\psi\|_{L^2(Q_{\frac 12})},\ \text{for}\ k\ge 0.
\end{equation*}
Thus, $\gamma(-\tfrac{i\rho} 2)$ can be calculated via the Taylor series expansion of $\gamma$ around $\zeta=0$ and after adding a geometric series
\begin{equation*}
|\gamma(-\tfrac{i\rho} 2)|=\Big|\int_{Q_{\frac 12}} e^{\rho  \sqrt{|\mu|}/2}\widehat{\mathbf w}_2(X,\mu)\overline\psi(X)\,dX\Big|\lesssim \|\psi\|_{L^2(Q_{\frac 12})}\, H(T,p,\rho).
\end{equation*}
All together,
\begin{equation}\label{E: unagozada}
\|\widehat{\mathbf w}_2(\cdot,\mu)\|_{L^2(Q_{\frac 12})}\lesssim e^{-\rho\sqrt{|\mu|}/2}H(T,p,\rho),\ \text{when}\ \mu\in\R.
\end{equation}
Define then,
\begin{equation*}
\mathbf U_2(X,y)=\frac 1{\sqrt{2\pi}}\int_{\R} e^{i\mu T} \widehat{\mathbf w}_2(X,\mu)\cosh{\left(y\sqrt{-i\mu}\right)}\,d\mu,
\end{equation*}
for $(X,y)$ in $Q_{\frac 12}\times \R$, with $\sqrt{-i\mu}=\sqrt{|\mu|}\, e^{-\frac{i\pi}4\sgn{\mu}}$. From \eqref{E: unagozada},
\begin{equation}\label{E:llegandoalfinal}
\|\mathbf U_2(\cdot,y)\|_{L^2(Q_{\frac 12})}\lesssim H(T,p,\rho),\ \text{for}\ |y|\le\frac \rho4\, .
\end{equation}
Observe that $\mathbf U_2$ is in $C^\infty(\overline Q_{\frac 12}\times [-\frac\rho4,\frac\rho4])$ and that one may derive similar bounds for higher derivatives of $\mathbf U_2$. Also,
\begin{equation}\label{E: laecuacionde U2}
\begin{cases}
\mathbf S\mathbf U_2+\partial^2_y\mathbf U_2=0,\ &\text{in}\ Q_{\frac 12}\times (-\frac\rho4,\frac\rho4),\\
\mathbf U_2=0,\ &\text{in}\ \partial_lQ_{\frac 12}\times (-\frac\rho4,\frac\rho4)
\end{cases}
\end{equation}
and
\begin{equation}\label{E: losiuetedeU2}
\mathbf U_2(X,0)=\frac 1{\sqrt{2\pi}}\int_{\R}e^{i\mu T}\widehat{\mathbf w}_2(X,\mu)\,d\mu =\mathbf w_2(X,T),\ \text{in}\ Q_{\frac12}.
\end{equation}
Next,
\begin{equation*}
\mathbf U_1(X,y)=\sum_{j=1}^{+\infty}e^{-\omega_j^2T/2}a_j\mathbf e_j(X)\cosh{(\omega_jy)},
\end{equation*}
with $a_j$ as in \eqref{def:a} satisfies
\begin{equation}\label{E: loqueverificaU1}
\mathbf U_1(X,0)=\mathbf w_1(X,T), \text{in}\ Q_{1},\quad\quad
\begin{cases}
\mathbf S\mathbf U_1+\partial^2_y\mathbf U_1=0,\ &\text{in}\ Q_{1}\times \R,\\
\mathbf U_1=0,\ &\text{in}\ \partial Q_{1}\times \R,
\end{cases}
\end{equation}
and
\begin{equation}\label{E: otracosadeU1}
\sup_{|y|\le1}\|\mathbf U_1(\cdot,y)\|_{L^2(Q_1)}\lesssim e^{1/T}\|\partial_t^p\mathbf u(\tfrac T2)\|_{L^2(\Omega)} \lesssim e^{1/T}H(T,p,\rho).
\end{equation}
Set then, $\mathbf U=\mathbf U_1+\mathbf U_2$. From \eqref{E: laecuacionde U2}, \eqref{E: losiuetedeU2}, \eqref{E: loqueverificaU1} and \eqref{E: idntidadmaravilosa} we have
\begin{equation*}
\begin{cases}
\mathbf S\mathbf U+\partial^2_y\mathbf U=0,\ &\text{in}\ Q_{\frac12}\times (-\frac\rho4,\frac\rho4),\\
\mathbf U=0,\ &\text{in}\ \partial_lQ_{\frac12}\times (-\frac\rho4,\frac\rho4),\\
U(X,0)=\partial_t^p\mathbf u(x,T),\ &\text{in}\ Q_{\frac 12},
\end{cases}
\end{equation*}
while \eqref{E:llegandoalfinal} and \eqref{E: otracosadeU1} show that
\begin{equation}\label{E: loqueverificaU}
\sup_{|y|\le\frac\rho4}\|\mathbf U(\cdot,y)\|_{L^2(Q_{\frac 12})}\lesssim e^{1/T}H(T,p,\rho),\ \text{with}\ \rho=\rho(\delta),\ 0<\rho\le 1.
\end{equation}
Now, $\mathbf S+\partial_y^2$ is an elliptic system with analytic coefficients. This, \eqref{E: loqueverificaU}, the fact that $\mathbf U(X,y)=0$, for $(X,y)=(x,x_{n+1}, y)$ in $\partial\Omega\times(-\frac12,\frac12)\times(-\frac\rho4,\frac\rho4)$ and that $\partial\Omega$ is analytic imply that there is $\rho=\rho(\delta)$, $0<\rho\le 1$ (See \cite{MorreyNirenberg} or \cite[Ch. II]{Giaquinta}) such that
\begin{equation*}
\|\partial_X^\gamma\partial_y^q\mathbf U(X,y)\|_{L^\infty(Q_{\frac14}\times (-\frac\rho4,\frac\rho4))}\le\rho^{-|\gamma|-q}|\gamma|!\,q!\,e^{1/T}H(T,p,\rho),\ \text{for}\ \gamma\in\N^{n+1},\ q\in\N.
\end{equation*}
Finally, $\mathbf U(X, 0)=\partial_t^p\mathbf u(x,T)$ in $\overline\Omega$ and Lemma \ref{L: 6086} follows from the latter and \eqref{E: una definici—n}.
\end{proof}
\begin{remark}\label{R: unremarkillomas}
Observe that we did not use quantitatively the smoothness of $\partial\Omega$ in the proof of Lemma \ref{L: 60100} and that to get the quantitative estimate of Lemma \ref{L: 6086} over only $B_{\frac\delta 2}(x_0)\cap\overline\Omega\times (0,T]$,  with $x_0$ in $\overline\Omega$ and $\delta$ as in \eqref{E:descripcionfrontera}, it suffices to know that either $B_\delta(x_0)\subset\Omega$ or that $\partial\Omega\cap B_{\delta}(x_0)$ is real-analytic.
 
\end{remark}
\section{Observability for higher order elliptic equations}\label{S:3}
We can now explain the proof of Theorem \ref{6131} by making use of Lemmas \ref{propagation} and \ref{L: 6086}
.
\begin{proof}[Proof of Theorem \ref{6131}]
From Lemma \ref{L: 6086}
\begin{equation*}
\begin{split}
|\partial_x^\alpha u(x,L)|
&\leq e^{1/\rho L^{1/(2m-1)}}|\alpha|!\,\rho^{-|\alpha|} \|u(0)\|_{L^2(\Omega)},\;\;\text{for}\
x\in\overline{\Omega}\ \text{and}\ 0<L\le T
\end{split}
\end{equation*}
and from Lemma \ref{propagation} there are $N=N(\Omega,|\omega|,\rho)$ and  $\theta=\theta(\Omega,|\omega|,\rho)$ in $(0,1)$ such that
\begin{equation}\label{923c1}
\|u(L)\|_{L^2(\Omega)}\leq N\|u(L)\|_{L^1(\omega)}
^{\theta}M^{1-\theta},\;\;\;\text{with}\;\;M=Ne^{N/L}
\|u(0)\|_{L^2(\Omega)},
 \end{equation}
when $\omega\subset\Omega$ is a measurable set with a positive measure. Set for each $t\in(0,T)$,
\begin{equation*}
\mathcal{D}_t=\{x\in\Omega: (x,t)\in\mathcal{D}\}\quad \text{and}\quad E=\{t\in(0,T): |\mathcal{D}_t|\geq |\mathcal{D}|/(2T)\}.
\end{equation*}
By Fubini's theorem, $\mathcal{D}_t$ is measurable for a.e. $t\in(0,T)$, $E$ is measurable in $(0,T)$ with $|E|\geq |\mathcal{D}|/(2|\Omega|)$
and $\chi_{E}(t)\chi_{\mathcal{D}_t}(x)\leq \chi_{\mathcal{D}}(x,t)$ over $\Omega\times(0,T)$. Next, let $q\in(0,1)$ be a
constant to be determined later and $l$ be a Lebesgue point of $E$. Then, from \cite[Lemma 2]{AEWZ} there is a monotone decreasing sequence  $\{l_k\}_{k\geq 1}$ satisfying $\lim_{k\to\infty} l_k=l$, $l<l_1\le T$,
\begin{equation}\label{6161can1}
l_{k+1}-l_{k+2}=q(l_k-l_{k+1})
\quad \text{and}\quad
|(l_{k+1},l_k)\cap E|\geq \frac{l_k-l_{k+1}}{3},\;\;k\in\mathbb{N}.
\end{equation}
Define
\begin{equation*}
\tau_k=l_{k+1}+(l_{k}-l_{k+1})/6, \;\;\;k\in\mathbb{N}.
\end{equation*}
From \eqref{923c1} there are $N=N(\Omega,|\mathcal{D}|,T,\rho)$ and $\theta=\theta(\Omega,|\mathcal{D}|,T,\rho)$, $0<\theta <1$, such that
\begin{equation*}
\|u(t)\|_{L^2(\Omega)}\leq \Big(N
e^{\frac{N}{(l_{k}-l_{k+1})^{1/(2m-1)}}}
\|u(t)
\|_{L^1(\mathcal{D}_t)}
\Big)^{\theta}\|u(l_{k+1})\|_{L^2(\Omega)}
^{1-\theta},
\end{equation*}
when $t\in[\tau_k, l_{k}]\cap E$. Integrating the above inequality  over $(\tau_k,l_{k})\cap E$, from Young's inequality and the standard energy estimate for the solutions to \eqref{E: 6081}, we have that for each $\e>0$,
\begin{equation*}
\begin{split}
\|u(l_{k})\|_{L^2(\Omega)}&\leq \e\|u(l_{k+1})\|_{L^2(\Omega)}\\
&\;\;\;\;\;\;+\e^{-\frac{1-\theta}{\theta}}
  Ne^{\frac{N}{(l_{k}-l_{k+1})^{1/(2m-1)}}}
  \int^{l_{k}}_{l_{k+1}}\chi_{E}
  \|u(t)\|_{L^1(\mathcal{D}_t)}\;dt.
\end{split}
\end{equation*}
Multiplying  the above inequality by $\e^{\frac{1-\theta}\theta}
e^{-\frac{N}{(l_{k}-l_{k+1})^{1/(2m-1)}}}$, replacing $\e$ by $\e^\theta$
and finally choosing $\e=e^{-\frac{1}{(l_{k}-l_{k+1})^{1/(2m-1)}}}$ in the resulting  inequality, we obtain that
\begin{equation*}\label{c20}
\begin{split}
  &e^{-\frac{N+1-\theta}{(l_{k}-l_{k+1})^{1/(2m-1)}}}\|u(l_{k})
  \|_{L^2(\Omega)}-  e^{-\frac{N+1}{(l_{k}-l_{k+1})^{1/(2m-1)}}}\|u(l_{k+1})\|
  _{L^2(\Omega)}\\
  &\;\;\;\;\leq N\int^{l_{k}}_{l_{k+1}}\chi_{E}
 \|u(t)
\|_{L^1(\mathcal{D}_t)}dt.
\end{split}
\end{equation*}
Therefore, fixing  $q$ in \eqref{6161can1} as $q=\Big(\frac{N+1-\theta}{N+1}\Big)^{2m-1}$, we have
\begin{equation}\label{10can2}
\begin{split}
&e^{-\frac{N+1-\theta}{(l_{k}-l_{k+1})^{1/(2m-1)}}}\|u(l_{k})
  \|_{L^2(\Omega)}-  e^{-\frac{N+1-\theta}{(l_{k+1}-l_{k+2})^{1/(2m-1)}}}\|u(l_{k+1})\|_
  {L^2(\Omega)}\\
&\leq N\int^{l_{k}}_{l_{k+1}}\chi_{E}
  \|u(t)
\|_{L^1(\mathcal{D}_t)}dt.
\end{split}
\end{equation}
Summing (\ref{10can2}) from $k=1$ to $+\infty$  completes the proof (the telescoping series method).
\end{proof}

To deal with the boundary observability inequalities for the fourth order parabolic evolution, let $\Omega_\rho=\{x\in\mathbb{R}^n : d(x,\overline\Omega)
<\rho\}$, with $\rho>0$ sufficiently small. By the inverse function theorem for analytic functions, $\Omega_\rho$ is a domain with analytic boundary (cf. \cite[p. 249]{AE}) and by standard extension arguments (cf. \cite[Chapter I, Theorem 2.3]{FursikovOImanuvilov}), the interior null controllability of the system
\begin{equation*}
\begin{cases}
\p_t u+\Delta^2 u=\chi_{_{\Omega_\rho\setminus\Omega}}
f,\ &\text{in}\ \Omega_\rho\times(0,T),\\
u=\frac{\partial u}{\partial\nu}=0,\ &\text{on}\ \p\Omega_\rho\times(0,T),\\
u(0)=u_0,\ &\text{in}\ \Omega_\rho,
\end{cases}
\end{equation*}
 with initial datum $u_0$ in $L^2(\Omega)$ is a consequence of Theorem \ref{6131} (See also Remark~\ref{102c1}) by standard duality arguments (HUM method) \cite{Lions1}.
The later implies that there are controls $g_1$ and $g_2$ in $L^2(\partial\Omega\times(0,T))$ with
$$\|g_k\|_{L^2(\partial\Omega\times(0,T))}\leq Ne^{\frac{N}{T^{1/3}}}\|u_0\|_{L^2(\Omega)},\;\;\;k=1,2,$$
such that the solution $u$ to
\begin{equation*}\label{6062}
\begin{cases}
\p_t u+\Delta^2 u=0,\ &\text{in}\ \Omega
\times(0,T),\\
u=g_1,\;\;\frac{\partial u}{\partial\nu}=g_2,\ &\text{on}\ \p\Omega\times(0,T),\\
u(0)=u_0,\ &\text{in}\ \Omega,
\end{cases}
\end{equation*}
verifies $u(T)\equiv 0$.
By a standard duality argument, this full boundary null controllability in turn implies the observability inequality
\begin{equation*}\label{6063}
\|\varphi(0)\|_{L^2(\Omega)}\le e^{N/T^{1/3}}\left[\|\tfrac{\partial\Delta \varphi}{\partial\nu}\|_{L^2(\partial\Omega\times(0,T))}+\|\Delta \varphi\|_{L^2(\partial\Omega\times(0,T))}\right],
\end{equation*}
for solutions $\varphi$ to the dual equation
\begin{equation*}
\begin{cases}
-\p_t \varphi+\Delta^2 \varphi=0,\ &\text{in}\ \Omega\times(0,T),\\
\varphi=\frac{\partial\varphi}{\partial\nu}=0,\ &
\text{on}\ \p\Omega\times(0,T),
\end{cases}
\end{equation*}
with initial datum $\varphi(T)=\varphi_T$ in $L^2(\Omega)$. Thus, we can derive from the above lines and from the decay of the energy the following result.
\begin{lemma}\label{YUANYUAN1}
There is $N=N(\Omega,\delta)$ such that the interpolation inequality
\begin{equation*}\label{YUANYUAN2}
\begin{split}
&\|u(T)\|_{L^2(\Omega)}\\
&\leq \left(e^{N/[\left(\epsilon_2-\epsilon_1\right)T^{\frac 13}]}
\left[\|\tfrac{\partial\Delta u}{\partial\nu}\|_{L^2(\partial\Omega\times [\epsilon_1T,\epsilon_2T]
)}+\|\Delta u\|_{L^2(\partial\Omega\times [\epsilon_1T,\epsilon_2T])}\right]\right)^{\frac 12}
\|u_0\|
_{L^2(\Omega)}^{\frac 12},
\end{split}
\end{equation*}
holds for all solutions $u$ to \eqref{6084} and $0\leq\epsilon_1<\epsilon_2\leq1$.
\end{lemma}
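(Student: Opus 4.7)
The plan is to deduce Lemma \ref{YUANYUAN1} from the boundary observability inequality recalled immediately before its statement, combined with $L^2$-contractivity of the evolution associated to \eqref{6084} and a short square-root trick; no new ingredient beyond what is already assembled in the preceding discussion seems required.

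First, I would reduce \eqref{6084} to the dual backward equation by time-reversal. Given a solution $u$ of \eqref{6084} and $0\le\epsilon_1<\epsilon_2\le 1$, set $\tau=(\epsilon_2-\epsilon_1)T$ and $\varphi(x,s)=u(x,\epsilon_2 T-s)$ for $s\in[0,\tau]$. Because $\Delta^2$ is formally self-adjoint and the clamped boundary conditions are preserved under time reversal, a direct computation shows that $\varphi$ satisfies
\[
-\p_s\varphi+\Delta^2\varphi=0 \ \text{in}\ \Omega\times(0,\tau),\qquad \varphi=\tfrac{\p\varphi}{\p\nu}=0 \ \text{on}\ \p\Omega\times(0,\tau),
\]
with final datum $\varphi(\tau)=u(\epsilon_1 T)$ in $L^2(\Omega)$. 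This is precisely the dual problem to which the observability inequality with constant $e^{N/\tau^{1/3}}$ applies, the relevant time interval now having length $\tau$ instead of $T$.

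Next, I would apply that observability inequality to $\varphi$ on $(0,\tau)$ and undo the change of variables $t=\epsilon_2 T-s$, obtaining
\[
\|u(\epsilon_2 T)\|_{L^2(\Omega)}\le e^{N/[(\epsilon_2-\epsilon_1)T]^{1/3}}\Bigl[\|\tfrac{\p\Delta u}{\p\nu}\|_{L^2(\p\Omega\times[\epsilon_1 T,\epsilon_2 T])}+\|\Delta u\|_{L^2(\p\Omega\times[\epsilon_1 T,\epsilon_2 T])}\Bigr].
\]
Since $-\Delta^2$ with the clamped boundary conditions is nonpositive and self-adjoint on $L^2(\Omega)$, the associated semigroup is a contraction, so $t\mapsto\|u(t)\|_{L^2(\Omega)}$ is nonincreasing; in particular $\|u(T)\|_{L^2(\Omega)}\le\|u(\epsilon_2 T)\|_{L^2(\Omega)}$ and $\|u(T)\|_{L^2(\Omega)}\le\|u_0\|_{L^2(\Omega)}$.

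Finally, to produce the square-root interpolation form in the statement, I would replace $\|u(\epsilon_2 T)\|_{L^2(\Omega)}$ on the left-hand side of the displayed inequality by the smaller quantity $\|u(T)\|_{L^2(\Omega)}$, multiply both sides of the resulting bound by $\|u(T)\|_{L^2(\Omega)}$, estimate the new right-hand factor by $\|u_0\|_{L^2(\Omega)}$, and take a square root; this yields exactly Lemma \ref{YUANYUAN1}. The argument is essentially a bookkeeping of facts already in place, and the only point that needs any care is checking that time reversal turns the forward evolution \eqref{6084} into the dual problem whose observability estimate was just derived—this is immediate from self-adjointness of $\Delta^2$ and the symmetry of the clamped boundary conditions, so no genuine obstacle is anticipated.
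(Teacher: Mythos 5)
Your proposal is correct and follows essentially the same route the paper intends: apply the boundary observability inequality for the dual problem (derived just before the lemma) on the subinterval $[\epsilon_1T,\epsilon_2T]$ via time reversal, then use $L^2$-decay of solutions to \eqref{6084} together with the square-root trick. In fact your bound carries the factor $e^{N/[(\epsilon_2-\epsilon_1)T]^{1/3}}$, which is sharper than the stated $e^{N/[(\epsilon_2-\epsilon_1)T^{1/3}]}$ since $(\epsilon_2-\epsilon_1)^{1/3}\ge\epsilon_2-\epsilon_1$ when $0<\epsilon_2-\epsilon_1\le 1$, so it implies the lemma.
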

Lemmas \ref{L: 6086} and \ref{YUANYUAN1} imply in a similar way to the reasonings in \cite[Theorem 11]{AEWZ} the following result.
\begin{lemma}\label{6089}
Assume that $E\subset(0,T)$ is a measurable set of positive measure and that $\Gamma_i\subset \p\Omega$, $i=1,2$, are measurable subsets with $|\Gamma_1|,|\Gamma_2|\geq\gamma_0>0$. Then, for each $\eta\in(0,1)$ there are $N=N(\Omega,\eta,\gamma_0,\delta)\geq1$  and $\theta=\theta(\Omega,\eta,\gamma_0,\delta)$, $0<\theta<1$, such that the inequality
\begin{equation}\label{6132}
\begin{split}
&\|u(t_2)\|_{L^2(\Omega)} \le\\
&\left( e^{N/(t_2-t_1)^{1/3}} \int_{t_1}^{t_2}\chi_E(t) \big[\|\tfrac{\partial\Delta u(t)}{\partial\nu}\|_{L^1(\Gamma_1)}+
\|\Delta u(t)\|_{L^1(\Gamma_2)}\big]\,dt \right)^{\theta}\|u(t_1)\|_{L^2(\Omega)}^{1-\theta},
\end{split}
\end{equation}
holds for all solutions $u$ to \eqref{6084},
when $0\le t_1<t_2\le T$ and $|(t_1,t_2)\cap E|\ge \eta (t_2-t_1)$. Moreover,
\begin{equation*}\label{6134}
\begin{split}
&e^{-\frac{N+1-\theta}{(t_2-t_1)^{1/3}}}\|u(t_2)\|_{L^2(\Omega)}- e^{-\frac{N+1-\theta}{(q\left(t_2-t_1\right))^{1/3}}}\|u(t_1)\|_{L^2(\Omega)}\\
&\le N\int_{t_1}^{t_2}\chi_E(t) \big[\|\tfrac{\partial\Delta u(t)}{\partial\nu}\|_{L^1(\Gamma_1)}+
\|\Delta u(t)\|_{L^1(\Gamma_2)}\big]
\,dt,\;\;\mbox{when }\;\;q\ge \Big(\tfrac{N+1-\theta}{N+1}\Big)^3.
\end{split}
\end{equation*}
\end{lemma}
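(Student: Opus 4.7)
The plan is to combine Lemma \ref{YUANYUAN1} with a joint space-time application of Lemma \ref{propagation} to upgrade the $L^2$-norm of the boundary data on $\partial\Omega\times I$ into the $L^1$-norm over the measurable subset $\Gamma_i\times(E\cap I)$ inside the lateral boundary. The second (telescoping-friendly) inequality will then follow from \eqref{6132} by the same Young-inequality trick and choice $\varepsilon=e^{-1/(t_2-t_1)^{1/3}}$ that produced \eqref{10can2} from \eqref{923c1}. The argument mirrors the reasoning of \cite[Theorem 11]{AEWZ} for the heat case; the differences here are that the evolution is fourth order (so $1/3=1/(2m-1)$) and that the boundary datum has two components living on two possibly different measurable subsets.

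\textbf{Step 1 (temporal localization).} Translate time so that $t_1=0$ and set $\tau=t_2$. A pigeonhole on $|E\cap (0,\tau)|\ge\eta\tau$ picks one of the three thirds of $(0,\tau)$ in which the density of $E$ is at least $\eta/3$; for definiteness take the middle third $I=(\tau/3,2\tau/3)$ and $I'=I\cap E$, so $|I'|\ge\eta\tau/3$. Lemma \ref{YUANYUAN1} with $\epsilon_1=1/3$, $\epsilon_2=2/3$ then gives
\begin{equation*}
\|u(\tau)\|_{L^2(\Omega)}\le\left(e^{N/\tau^{1/3}}\left[\|\tfrac{\p\Delta u}{\p\nu}\|_{L^2(\partial\Omega\times I)}+\|\Delta u\|_{L^2(\partial\Omega\times I)}\right]\right)^{1/2}\|u(0)\|_{L^2(\Omega)}^{1/2}.
\end{equation*}

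\textbf{Step 2 (space-time propagation on the boundary).} Lemma \ref{L: 6086}, applied with $u(0)$ as initial datum, gives for $x\in\overline\Omega$, $s\in I$, $\alpha\in\N^n$, $p\in\N$,
\begin{equation*}
|\p_x^\alpha\p_t^p(\Delta u)(x,s)|+|\p_x^\alpha\p_t^p(\p_\nu\Delta u)(x,s)|\le|\alpha|!\,p!\,\rho^{-|\alpha|-p-3}\,\tau^{-p}\,e^{N/\tau^{1/3}}\|u(0)\|_{L^2(\Omega)},
\end{equation*}
with $\rho=\rho(\Omega,\delta)\in(0,1]$; the factor $s^{-p}$ is bounded by $(3/\tau)^p$ since $s\ge\tau/3$, and the $3^p$ is absorbed into $\rho$. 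Covering $\partial\Omega$ by finitely many analytic charts from \eqref{E:descripcionfrontera}, pulling back, and rescaling $s\mapsto 3s/\tau-1$ so $I$ becomes the unit interval $(0,1)$, makes each pulled-back function analytic on $B'_\delta\times(0,1)\subset\R^n$ with size parameter $M=e^{N/\tau^{1/3}}\|u(0)\|_{L^2(\Omega)}$ and radius $\rho'=\rho'(\Omega,\delta)\in(0,1]$ independent of $\tau$. Applying Lemma \ref{propagation} to each chart, with $\omega$ the pull-back of $(\Gamma_i\cap\varphi_j(B'_\delta))\times I'$ (whose rescaled measure is bounded below by $c(\Omega,\gamma_0,\eta)>0$), and summing over the finite cover, yields $\theta_1=\theta_1(\Omega,\eta,\gamma_0,\delta)\in(0,1)$ and $N_1=N_1(\Omega,\eta,\gamma_0,\delta)$ with
\begin{equation*}
\|\tfrac{\p\Delta u}{\p\nu}\|_{L^2(\partial\Omega\times I)}+\|\Delta u\|_{L^2(\partial\Omega\times I)}\le N_1\bigl(e^{N/\tau^{1/3}}\|u(0)\|_{L^2(\Omega)}\bigr)^{1-\theta_1}\mathcal{O}(u)^{\theta_1},
\end{equation*}
where $\mathcal{O}(u)=\int_{I'}\bigl[\|\tfrac{\p\Delta u(t)}{\p\nu}\|_{L^1(\Gamma_1)}+\|\Delta u(t)\|_{L^1(\Gamma_2)}\bigr]\,dt$.

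\textbf{Step 3 and main obstacle.} Substituting the Step 2 bound into Step 1 and collecting powers of $\|u(0)\|_{L^2(\Omega)}$ and of $e^{N/\tau^{1/3}}$ yields \eqref{6132} with $\theta=\theta_1/2$, the constant $N$ there absorbing the factor $1/\theta$ coming from rewriting the exponent. The second inequality follows from \eqref{6132} by multiplying through by $e^{-(N+1-\theta)/\tau^{1/3}}$, applying Young's inequality with conjugate exponents $1/\theta$ and $1/(1-\theta)$, and setting $\varepsilon=e^{-1/\tau^{1/3}}$, exactly as in the passage from \eqref{923c1} to \eqref{10can2}; the condition $q\ge\bigl((N+1-\theta)/(N+1)\bigr)^3$ then enforces the telescoping structure. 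The delicate point is Step 2: verifying that the $t^{-p}p!$ factor from Lemma \ref{L: 6086}, upon restriction to $s\in[\tau/3,2\tau/3]$, amounts to a time radius of analyticity proportional to $\tau$ rather than to a negative power of $\tau$—which after the rescaling $s\mapsto 3s/\tau-1$ becomes a constant-order radius on the unit time interval. This is precisely what allows Lemma \ref{propagation} to return a $\theta_1\in(0,1)$ and $N_1$ depending only on $(\Omega,\eta,\gamma_0,\delta)$ and not on $\tau$; the coupling with the analytic boundary charts \eqref{E:descripcionfrontera} is routine but must be performed while tracking these $\tau$-uniform constants.
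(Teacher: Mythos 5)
Your joint space--time application of Lemma~\ref{propagation} on the lateral boundary $\partial\Omega\times I$ is a genuinely different decomposition from the paper's, and the central insight you flag at the end (that for $s\ge\tau/3$ the $t^{-p}p!$ term in Lemma~\ref{L: 6086} yields a time-analyticity radius of order $\tau$, hence a $\tau$-uniform radius after rescaling $I$ to $(0,1)$) is exactly the right reason the constants in Lemma~\ref{propagation} come out $\tau$-independent. The paper instead treats the two directions \emph{iteratively}: it first bounds $\|v\|_{L^2}$ by $\|v\|_{L^1}^{1/2}\|v\|_{L^\infty}^{1/2}$, then propagates smallness \emph{in time only}, for each fixed $x\in\partial\Omega$, using the one-dimensional Lemma~13 of \cite{AEWZ} from $[\tau,\tilde\tau]$ to $E\cap(\tilde t_1,\tilde t_2)$, and afterwards propagates smallness \emph{in space only}, for each fixed $t$, using the hypersurface generalization of Lemma~\ref{propagation} from $\partial\Omega$ to $\Gamma_i$, gluing the two interpolation exponents by H\"older. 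Your one-shot propagation on the $n$-dimensional chart domain $B_\delta'\times(0,1)$ is cleaner (one application of Lemma~\ref{propagation} per boundary component instead of two nested interpolations) and, since $|\alpha'|!\,p!\le(|\alpha'|+p)!$, the $|\alpha|!\,p!$ structure of Lemma~\ref{L: 6086} is exactly strong enough to supply the joint-analyticity hypothesis you need. Both routes end with the same Young-inequality trick and the same choice $\varepsilon=e^{-1/(t_2-t_1)^{1/3}}$, so the second inequality is identical.

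There is, however, a concrete error in Step~1. The pigeonhole only guarantees that \emph{some} third of $(0,\tau)$ carries $E$-density at least $\eta/3$; you cannot ``for definiteness'' take the middle third. For instance, if $E=(0,\tau/3)$ then $\eta=1/3$ is admissible and the middle third meets $E$ in a null set, so the asserted $|I'|\ge\eta\tau/3$ fails. More importantly, if the only available third is the first one, $(0,\tau/3)$, then the time-analyticity radius supplied by Lemma~\ref{L: 6086} degenerates as $s\to0$, and the rescaling no longer yields $\tau$-uniform constants, which is precisely the thing your Step~2 relies on. The standard fix --- and what the paper does --- is not a pigeonhole: one deterministically removes thin slabs of relative width $\sim\eta$ at both ends of $(t_1,t_2)$, obtaining $\tau,\tilde t_1,\tilde t_2,\tilde\tau$ with $\tau-t_1\ge\tfrac{\eta}{20}(t_2-t_1)$ and $|E\cap(\tilde t_1,\tilde t_2)|\ge\tfrac{3\eta}{4}(t_2-t_1)$, so the retained window is both $E$-dense and bounded away from $t_1$ by a fixed fraction of $t_2-t_1$. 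If you replace the pigeonhole paragraph by that deterministic trimming, the rest of your argument goes through.

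Two smaller points worth tracking explicitly: (i) when you pass from $\text{\rlap|{$\int_\omega$}}$ to the plain integral $\mathcal{O}(u)$, a factor $|\omega|^{-\theta_1}\sim(\gamma_0\eta\tau)^{-\theta_1}$ appears; you should note that $\tau^{-\theta_1}\le e^{3\theta_1/\tau^{1/3}}$ for $\tau\le1$ so it is absorbed into the $e^{N/\tau^{1/3}}$ prefactor; (ii) for a chart of $\partial\Omega$ that carries none (or too little) of $\Gamma_i$, Lemma~\ref{propagation} gives nothing on that chart, so ``summing over the finite cover'' hides a chain-of-overlapping-charts argument to pass from one good chart to the global $L^\infty$ on $\partial\Omega\times I$; this is the same implicit step as in the paper's ``obvious generalization of Lemma~\ref{propagation} to hypersurfaces'', so it is acceptable at the level of detail expected here, but it should at least be acknowledged.
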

\begin{proof}
Suppose that $0<\eta<1$ satisfies $|(t_1,t_2)\cap E|\geq\eta(t_2-t_1)$. Set
\begin{align*}
 &\tau=t_1+\frac{\eta}{20}(t_2-t_1),\ \tilde{t}_1=t_1+\frac{\eta}{8}(t_2-t_1),\\
&\tilde{t}_2=t_2-\frac{\eta}{8}(t_2-t_1),\ \tilde{\tau}=t_2-\frac{\eta}{20}(t_2-t_1).
\end{align*}
Then, $t_1<\tau<\tilde{t}_1<\tilde{t}_2<\tilde{\tau}<t_2$ and $|E\cap(\tilde{t}_1,\tilde{t}_2)|\geq\tfrac{3\eta}{4}(t_2-t_1)$ and it follows from Lemma \ref{YUANYUAN1} that there is $N=N(\Omega, \eta,\delta)$ such that
\begin{equation*}
\|u(t_2)\|_{L^2(\om)}\leq
e^{N/
\left(t_2-t_1\right)^{1/3}}
\big[\|\tfrac{\partial\Delta u}{\partial\nu}\|
_{L^2(\partial\Omega\times(\tau,\tilde{\tau}))}+
\|\Delta u\|_{L^2(\partial\Omega\times(\tau,\tilde{\tau}))}\big]
^{1/2}
\|u(t_1)\|
_{L^2(\Omega)}^{1/2}.
\end{equation*}
Next,  the inequality
\begin{equation*}
\|\tfrac{\p\Delta u}{\p\nu}\|_{L^2
(\partial\Omega\times(\tau,\tilde{\tau}))}\leq\|\tfrac{\p\Delta u}{\p\nu}\|^{1/2}_{L^1
(\partial\Omega\times(\tau,\tilde{\tau}))}
\|\tfrac{\p\Delta u}{\p\nu}\|^{1/2}_{L^\infty
(\partial\Omega\times(\tau,\tilde{\tau}))}
\end{equation*}
and Lemma \ref{L: 6086} show that
\begin{equation}\label{E: c16}
\|\tfrac{\p\Delta u}{\p\nu}\|_{L^2
(\partial\Omega\times(\tau,\tilde{\tau}))}\leq Ne^{\frac{N}{(t_2-t_1)^{1/3}}}\|u(t_1)\|
_{L^2(\om)}^{1/2}\|\tfrac{\p\Delta u}{\p\nu}\|^{1/2}_{L^1
(\partial\Omega\times(\tau,\tilde{\tau}))}.
\end{equation}
Set $v(x,t)=\tfrac{\p\Delta u}{\p\nu}(x,t)$, for $x$ in $\partial\Omega$ and $t>0$. Then,
\begin{equation}\label{E: c17}
\|v\|_{L^1(\partial\Omega
\times(\tau,\tilde{\tau}))}\leq\left(\tilde{\tau}-\tau\right)
\int_{\partial\Omega}
\|v(x,\cdot)\|_{L^\infty(\tau,\tilde{\tau})}\,d\s\, .
\end{equation}
Denote the interval $[\tau,\tilde{\tau}]$ as $[a,a+L]$, with $a=\tau$ and $L=\tilde{\tau}-\tau=(1-\frac\eta{10})(t_2-t_1)$.
Then,  Lemma \ref{L: 6086} shows that there is $N=N(\om,\eta,\delta)$ such that for each fixed $x$ in $\partial\Omega$, $\tau\le t\le \tilde{\tau}$ and $p\ge 0$,
\begin{equation}\label{E: c18}
|\p_t^p v(x,t)|\leq
\frac{e^{N/\left(t_2-t_1\right)^{1/3}}p!}{\left(\eta
(t_2-t_1)/40\right)^p}
\|u(t_1)\|_{L^2(\om)}\triangleq\frac{Mp!}{\left(2\rho L\right)^\beta}\, ,
\end{equation}
with
\begin{equation*}
M=e^{N/\left(t_2-t_1\right)^{1/3}}\|u(t_1)\|_{L^2(\om)}\ \;\;\text{and}\ \;\;\rho=\frac{\eta}{8\left(10-\eta\right)}\, .
\end{equation*}
Hence it follows from \eqref{E: c18} and \cite[Lemma 13]{AEWZ} that
\begin{equation*}\label{E: c19}
\|v(x,\cdot)\|_{L^{\infty}(\tau,\tilde{\tau})}
\leq \Big(\text{\rlap |{$\int_{E\cap(\tilde{t}_1,\tilde{t}_2)}$}}|v(x,t)|\;dt\Big)
^\gamma
\left(Ne^{N/\left(t_2-t_1\right)^{1/3}}\|u(t_1)\|_{L^2(\om)}
\right)^{1-\gamma},
\end{equation*}
for all $x$ in $\p\Omega$, with $N=N(\om,\eta,\delta)$ and $\gamma=\gamma(\eta)$ in $(0,1)$.
This, along with \eqref{E: c17} and H\"{o}lder's inequality leads to
\begin{equation}\label{E: c20}
\begin{split}
\|v\|&_{L^1(\partial\Omega\times
(\tau,\tilde{\tau}))}\leq e^{\frac{N}{(t_2-t_1)^{1/3}}}
\Big(\int_{E\cap(\tilde{t}_1,\tilde{t}_2)}
\int_{\partial\Omega}
|v(x,t)|\;d\s dt\Big)^\gamma \|u(t_1)\|^{1-\gamma}_{L^2(\om)},
\end{split}
\end{equation}
with some new $N$ and $\gamma$ as above. Because,  $t-t_1\geq \tilde{t}_1-t_1=\tfrac{\eta}{8}\left(t_2-t_1\right)$, when $t\in (\tilde{t}_1,\tilde{t}_2)$, we get from Lemma \ref{L: 6086} that
\begin{equation*}
\|\p_{x'}^\alpha v(t)\|_{L^\infty(\partial\Omega)} \leq
\frac{e^{N/\left(t_2-t_1\right)^{1/3}} |\alpha|!}{\rho^{|\alpha|}}\, \|u(t_1)\|_{L^2(\om)},\ \text{for}\ \alpha\in \mathbb{N}^{n-1}
\end{equation*}
and for some new constants $N=N(\om,\eta,\delta)$ and $\rho=\rho(\Omega,\delta)$.
By the obvious generalization  of Lemma~\ref{propagation} to the case of real-analytic functions defined over analytic  hypersurfaces in $\mathbb{R}^n$, there are $N=N\left(\om, \eta, |\Gamma_1|,\delta\right)$ and $\vartheta=\vartheta\left(\om,|\Gamma_1|,\delta\right)$, $0<\vartheta<1$, such that
\begin{equation}\label{E: c21}
\int_{\partial\Omega}|v(x,t)|\;d\s\leq
\left(\int_{\Gamma_1}|v(x,t)|\;d\s\right)^\vartheta
\left(e^{N/\left(t_2-t_1\right)^{1/3}}\|u(t_1)\|_{L^2(\om)}
\right)^{1-\vartheta},
\end{equation}
when $t\in E\cap(\tilde{t}_1,\tilde{t}_2)$, and it follows from \eqref{E: c20}, (\ref{E: c21}) as well as H\"{o}lder's inequality that
\begin{equation*}\label{E: c22}
\|v\|_{L^1(\p\Omega\times
(\tau,\tilde{\tau}))}\leq\Big(e^{N/\left(t_2-t_1\right)^{1/3}}
\int_{E\cap(\tilde{t}_1,\tilde{t}_2)}\int_{\Gamma_1}
|v(x,t)|\;d\sigma dt\Big)^{\vartheta\gamma}\|u(t_1)\|
_{L^2(\om)}^{1-\vartheta\gamma}.
\end{equation*}
This, together with \eqref{E: c16} and the definition of $v$ leads to
\begin{equation*}
\|\tfrac{\p\Delta u}{\p\nu}\|_{L^2
(\partial\Omega\times(\tau,\tilde{\tau}))}
\leq
\Big(e^{\frac{N}{(t_2-t_1)^{1/3}}}
\int_{E\cap(\tilde{t}_1,\tilde{t}_2)}\int_{\Gamma_1}
|\tfrac{\p\Delta u}{\p\nu}(x,t)|\;d\sigma dt\Big)^{\theta_1}
\|u(t_1)\|
_{L^2(\om)}^{1-\theta_1}.
\end{equation*}
Similarly, we can get that
\begin{equation*}
\|\Delta u\|_{L^2
(\partial\Omega\times(\tau,\tilde{\tau}))}
\leq
\Big(e^{\frac{N}{(t_2-t_1)^{1/3}}}
\int_{E\cap(\tilde{t}_1,\tilde{t}_2)}\int_{\Gamma_2}
\big|\Delta u(x,t)\big|\;d\sigma dt\Big)^{\theta_2}
\|u(t_1)\|
_{L^2(\om)}^{1-\theta_2}.
\end{equation*}
These last two inequalities, as well as the fact that
$$\frac{a^\theta+b^\theta}{2}\leq
\Big(\frac{a+b}{2}\Big)^{\theta},\;\;\text{when}\;\;
a,b>0,\ 0<\theta<1, $$
lead to the first desired  estimate \eqref{6132}. Next, applying Young's inequality to \eqref{6132},
we obtain that for each $\varepsilon>0$,
\begin{equation*}
\begin{split}
\|u(&t_2)\|_{L^2(\Omega)} \le\varepsilon \|u(t_1)\|_{L^2(\Omega)}\\
&+\varepsilon^{-\frac{1-\theta}{\theta}}Ne^{\frac{N}{(t_2-t_1)^{1/3}}}
\int_{t_1}^{t_2}\chi_E(t) \big[\|\tfrac{\partial\Delta u}{\partial\nu}(t)\|_{L^1(\Gamma_1)}+
\|\Delta u(t)\|_{L^1(\Gamma_2)}\big]\,dt.
\end{split}
\end{equation*}
Hence, after some computations, we may get that
\begin{equation*}
\begin{split}
\varepsilon^{1-\theta}&e^{-\frac{N}{(t_2-t_1)^{1/3}}}\|u(t_2)\|_{L^2(\Omega)} -\varepsilon e^{-\frac{N}{(t_2-t_1)^{1/3}}} \|u(t_1)\|_{L^2(\Omega)}\\
&\leq \int_{t_1}^{t_2}\chi_E(t) \big[\|\tfrac{\partial\Delta u}{\partial\nu}(t)\|_{L^1(\Gamma_1)}+
\|\Delta u(t)\|_{L^1(\Gamma_2)}\big]\,dt,\ \text{for all}\ \e>0.
\end{split}
\end{equation*}
Choosing now $\varepsilon=e^{-\frac{1}{(t_2-t_1)^{1/3}}}$ implies the second estimate in the Lemma.
\end{proof}
We now complete the proof of Theorems \ref{6106} and \ref{614g3}.

\begin{proof}[Proof of Theorems \ref{6106} and \ref{614g3}]
Set for each $t\in(0,T)$
\begin{equation*}
\mathcal{J}_t=\{x\in\p\Omega: (x,t)\in\mathcal{J}\}\quad \text{and}\quad E=\{t\in(0,T): |\mathcal{J}_t|\geq |\mathcal{J}|/(2T)\}.
\end{equation*}
By  Fubini's theorem, $\mathcal{J}_t$ is measurable for a.e. $t\in(0,T)$, $E$ is measurable in $(0,T)$ with $|E|\geq |\mathcal{J}|/(2|\p\Omega|)$
and $\chi_{E}(t)\chi_{\mathcal{J}_t}(x)\leq \chi_{\mathcal{J}}(x,t)$ over $\p\Omega\times(0,T)$.
Then, with similar arguments as the ones in the proof of Lemma \ref{6089}, we can get that
 for each $0<\eta<1$,  there are $N=N(\Omega,\eta,|\mathcal{J}|,T,\delta)$  and $\theta=\theta(\Omega,\eta,|\mathcal{J}|,T,\delta)$ with $0<\theta<1$, such that
\begin{equation*}
\begin{split}
&\|u(t_2)\|_{L^2(\Omega)} \le\\
&\left( Ne^{N/(t_2-t_1)^{1/3}} \int_{t_1}^{t_2}\chi_E(t) \big[\|\tfrac{\partial\Delta u}{\partial\nu}(t)\|_{L^1(\mathcal{J}_t)}+
\|\Delta u(t)\|_{L^1(\mathcal{J}_t)}\big]\,dt \right)^{\theta}\|u(t_1)\|_{L^2(\Omega)}^{1-\theta},
\end{split}
\end{equation*}
holds for all solutions $u$ to \eqref{6084},
when $0\le t_1<t_2\le T$ and $|(t_1,t_2)\cap E|\ge \eta (t_2-t_1)$. Moreover,
\begin{equation}\label{610c1}
\begin{split}
&e^{-\frac{N+1-\theta}{(t_2-t_1)^{1/3}}}\|u(t_2)\|_{L^2(\Omega)}- e^{-\frac{N+1-\theta}{(q\left(t_2-t_1\right))^{1/3}}}\|u(t_1)\|_{L^2(\Omega)}\\
&\le N\int_{t_1}^{t_2}\chi_E(t) \big[\|\tfrac{\partial\Delta u}{\partial\nu}(t)\|_{L^1(\mathcal{J}_t)}+
\|\Delta u(t)\|_{L^1(\mathcal{J}_t)}\big]
\,dt,\;\;\mbox{when }\;\;q\ge \Big(\tfrac{N+1-\theta}{N+1}\Big)^3.
\end{split}
\end{equation}

Now, let $\eta=1/3$ and $q=(N+1-\theta)^3/(N+1)^3$ with $N$
and $\theta$ as above.
Assume that $l$ is a Lebesgue point of $E$. By \cite[Lemma 2]{AEWZ}, there is a monotone decreasing sequence  $\{l_k\}_{k\geq 1}$ in $(0,T)$
satisfying $\lim_{k\to\infty} l_k=l$, $l<l_1\le T$ and \eqref{6161can1}.
These, together with \eqref{610c1}, imply that
\begin{equation}\label{610c12}
\begin{split}
&e^{-\frac{N+1-\theta}{(l_k-l_{k+1})^{1/3}}}\|u(l_k)\|
_{L^2(\Omega)}- e^{-\frac{N+1-\theta}{(l_{k+1}-l_{k+2})^{1/3}}}\|u(l_{k+1})\|
_{L^2(\Omega)}\\
&\le N\int_{l_{k+1}}^{l_k}\chi_E(t) \big[\|\tfrac{\partial\Delta u}{\partial\nu}(t)\|_{L^1(\mathcal{J}_t)}+
\|\Delta u(t)\|_{L^1(\mathcal{J}_t)}\big]
\,dt,\ k\in\N.
\end{split}
\end{equation}
Finally, adding up \eqref{610c12} from $k=1$ to $+\infty$ (the telescoping series)
we get that
\begin{equation*}
\begin{split}
\|u(l_1)\|_{L^2(\Omega)}
&\leq Ne^{\frac{N+1-\theta}{(l_1-l_2)^{1/3}}}\int_{l}^{l_1}
\chi_{E}(t)\big[\|\tfrac{\partial\Delta u}{\partial\nu}(t)\|_{L^1(\mathcal{J}_t)}+
\|\Delta u(t)\|_{L^1(\mathcal{J}_t)}\big]
\,dt\\
&\leq N\int_{\mathcal{J}}|\tfrac{\p\Delta u}{\p\nu}(x,t)|+|\Delta u(x,t)|\;d\sigma dt,
\end{split}
\end{equation*}
which completes the proof of Theorem \ref{6106}.

The previous reasonings show that  Lemma~\ref{6089}, as well as \cite[Lemma 2]{AEWZ}
and the telescoping series method imply the observability inequality from two possibly distinct measurable subsets of $\partial\Omega\times (0,T)$ in Theorem \ref{614g3}.
\end{proof}

\section{Observability for second order systems}\label{S:4}

Now, we can apply Lemmas \ref{L: 6086}, \ref{propagation} and the telescoping series method to sketch a proof Theorem \ref{nball}.
\begin{proof}[Proof of Theorem~\ref{nball}]
From Lemma \ref{L: 6086},
\begin{equation*}\label{anas3}
\begin{split}
|\partial_x^\alpha \mathbf{u}(x,L)|
&\leq e^{1/\rho L}|\alpha|!\,\rho^{-|\alpha | }\|\mathbf{u}(0)\|_{L^2(\Omega)^\ell},\;\;\text{for all}\;\;
x\in\overline{\Omega},\;\alpha\in\mathbb{N}^n.
\end{split}
\end{equation*}
Hence, for each $\eta=1,\dots,\ell$, it holds that
\begin{equation*}
|\partial_x^\alpha u^\eta(x,L)|\le
M|\alpha|!\,\rho^{-|\alpha|},\ \text{for all}\ \alpha\in\mathbb{N}^n,\ x\in\overline{\Omega},\ \text{with}\ M=e^{1/\rho L}\|\mathbf{u}(0)\|_{L^2(\Omega)^\ell}.
\end{equation*}
From the propagation of smallness for real-analytic functions from measurable sets (cf. Lemma~\ref{propagation}), we get that for each
$\eta=1,\dots,\ell$, there are $N_\eta=N_\eta(\Omega,\omega_0,\delta)$ and  $\theta_\eta=\theta_\eta(\Omega,\omega_0,\delta)$, $0<\theta_\eta<1$, such that
\begin{equation*}
\|u^\eta(L)\|_{L^2(\Omega)}\leq N_\eta\|u^\eta(L)\|_{L^1(\omega_\eta)}
^{\theta_\eta}M^{1-\theta_\eta}.
 \end{equation*}
Let $N=\max_{1\leq \eta\leq \ell}\{N_\eta\}$ and $\theta=\min_{1\leq \eta\leq \ell}\{\theta_\eta\}$.
Then, we get the following interpolation inequality with $\ell$ different observations:
\begin{equation}\label{difb}
\begin{split}
\|\mathbf{u}(L)\|_{L^2(\Omega)^\ell}&\leq N\Big(\sum_{\eta=1}^{\ell}\|u^\eta(L)
\|^{\theta}_{L^1(\omega_\eta)}\Big)
M^{1-\theta}\\
&\leq N\Big(\sum_{\eta=1}^{\ell}\|u^\eta(L)
\|_{L^1(\omega_\eta)}\Big)
^{\theta}\left(Ne^{N/L}\|\mathbf{u}(0)\|
_{L^2(\Omega)^\ell}\right)^{1-\theta}.
\end{split}
\end{equation}
Next, let $q\in(0,1)$ be a
constant to be determined later and $l$ be a Lebesgue point of $E$. Then, by \cite[Lemma 2]{AEWZ} there is a decreasing sequence  $\{l_m\}_{m\geq 1}$ satisfying $\lim_{m\to\infty} l_m=l$, $l<l_1\le T$ and \eqref{6161can1}. Define as before for each $m\in\mathbb{N}$,
\begin{equation*}
\tau_m=l_{m+1}+(l_{m}-l_{m+1})/6.
\end{equation*}
Then, by the  decay of the energy of the solutions $\mathbf{u}$ to \eqref{anas},
\begin{equation}\ll{unique4}
\|\mathbf{u}(l_m)\|_{L^2(\Omega)^\ell}\leq
\|\mathbf{u}(t)\|_{L^2(\Omega)^\ell},\ \text{for all}\  t\in(\tau_m,l_{m}).
\end{equation}
Moreover, it follows from \eqref{difb} that
\begin{equation*}
\|\mathbf{u}(t)\|_{L^2(\Omega)^\ell}\leq \Big(N
e^{\frac{N}{l_{m}-l_{m+1}}}\sum_{\eta=1}^{\ell}
\|u^\eta(t)
\|_{L^1(\omega_\eta)}
\Big)^{\theta}\|\mathbf{u}(l_{m+1})\|_{L^2(\Omega)^\ell}
^{1-\theta},\ \text{for}\
\tau_m\le t< l_{m}.
\end{equation*}
Applying the Young inequality, we get that for each $\e>0$,
\begin{equation*}
  \|\mathbf{u}(t)\|_{L^2(\Omega)^\ell}\leq \e\|\mathbf{u}(l_{m+1})\|_{L^2(\Omega)^\ell}+
  \e^{-\frac{1-\theta}{\theta}}N
  e^{\frac{N}{l_{m}-l_{m+1}}}\sum_{\eta=1}^
  {\ell}\|u^\eta(t)
\|_{L^1(\omega_\eta)},
\end{equation*}
for $\tau_m\le t <l_{m}$. Integrating the above inequality  over $(\tau_m,l_{m})\cap E$, we have by (\ref{unique4}) that for each $\e>0$,
\begin{equation*}
\begin{split}
\|\mathbf{u}(l_{m})\|_{L^2(\Omega)^\ell}&\leq \e\|\mathbf{u}(l_{m+1})\|_{L^2(\Omega)^\ell}\\
&\;\;\;\;\;\;+\e^{-\frac{1-\theta}{\theta}}
  Ne^{\frac{N}{l_{m}-l_{m+1}}}\int^{l_{m}}_{l_{m+1}}\chi_{E}
  \sum_{\eta=1}^{\ell}\|u^\eta(t)
\|_{L^1(\omega_\eta)}\;dt.
\end{split}
\end{equation*}
Multiplying  the above inequality by $\e^{\frac{1-\theta}\theta}
e^{-\frac{N}{l_m-l_{m+1}}}$ and  replacing $\e$ by $\e^\theta$,
we get
\begin{equation*}
\begin{split}
\e^{1-\theta}e^{-\frac{N}{l_{m}-l_{m+1}}}\|
\mathbf{u}(l_{m})\|_{L^2(\Omega)^\ell}&\leq \e e^{-\frac{N}{l_{m}-l_{m+1}}}\|\mathbf{u}(l_{m+1})
  \|_{L^2(\Omega)^\ell}\\
  &\;\;\;\;\;\;+N\int^{l_{m}}
  _{l_{m+1}}\chi_{E}\sum_{\eta=1}^{\ell}\|u^\eta(t)
\|_{L^1(\omega_\eta)}dt.
\end{split}
\end{equation*}
Choosse then $\e=e^{-\frac{1}{l_{m}-l_{m+1}}}$ to obtain that
\begin{equation}\label{c2}
\begin{split}
  &e^{-\frac{N+1-\theta}{l_{m}-l_{m+1}}}\|\mathbf{u}(l_{m})
  \|_{L^2(\Omega)^\ell}-  e^{-\frac{N+1}{l_{m}-l_{m+1}}}\|\mathbf{u}(l_{m+1})\|
  _{L^2(\Omega)^\ell}\\
  &\;\;\;\;\leq N\int^{l_{m}}_{l_{m+1}}\chi_{E}
 \sum_{\eta=1}^{\ell}\|u^\eta(t)
\|_{L^1(\omega_\eta)}dt,\quad \text{when}\ m\ge 0.
\end{split}
\end{equation}
Finally, we take $q=\frac{N+1-\theta}{N+1}$. Clearly, $0<q<1$ and from \eqref{c2} and \eqref{6161can1}
\begin{equation}\label{10}
\begin{split}
&e^{-\frac{N+1-\theta}{l_{m}-l_{m+1}}}\|\mathbf{u}(l_{m})
  \|_{L^2(\Omega)^\ell}-  e^{-\frac{N+1-\theta}{l_{m+1}-l_{m+2}}}\|\mathbf{u}(l_{m+1})\|_
  {L^2(\Omega)^\ell}\\
&\leq N\int^{l_{m}}_{l_{m+1}}\chi_{E}
  \sum_{\eta=1}^{\ell}\|u^\eta(t)
\|_{L^1(\omega_\eta)}dt\, .
\end{split}
\end{equation}
Summing (\ref{10}) from $m=1$ to $+\infty$ completes the proof.
\end{proof}

Because the full boundary $\partial\Omega$ is analytic, we can use the global internal null controllability for the system \eqref{anas} (a known consequence of Theorem~\ref{614g1} by duality)  and the standard extension method (cf. \cite[p. 249]{AE}) to get the following boundary null controllability: for each $\mathbf{u}_0$ in $L^2(\Omega)^\ell$, there is $\mathbf{g}\in L^2(\p\Omega\times(0,T))^\ell$, with
$$\|\mathbf{g}\|_{L^2(\p\Omega\times(0,T))^\ell}\leq Ne^{N/T}\|\mathbf{u}_0\|
_{L^2(\Omega)^\ell},$$
such that the solution $\mathbf u$ to
\begin{equation*}
\begin{cases}
\partial_t\mathbf u-\mathbf L^{\ast}\mathbf u=0,\ &\text{in}\ \Omega\times(0,T),\\
\mathbf{u}=\mathbf{g},\ &\text{on}\ \partial\Omega\times(0,T),\\
\mathbf{u}(0)=\mathbf{u}_0,\ &\text{in}\ \Omega.
\end{cases}
\end{equation*}
verifies $\mathbf{u}(T)=0$. Also, by the standard duality argument \cite{Lions1}, this boundary null controllability in turn implies the  observability inequality:
\begin{equation*}
\|\mathbf{w}(0)\|_{L^2(\Omega)^\ell}\leq Ne^{N/T}
\|\tfrac{\p\mathbf{w}}{\p\mathbf{\nu}}\|
_{L^2(\partial\Omega\times(0,T))},\quad \left(\tfrac{\p\mathbf{w}}{\p\mathbf{\nu}}\right)^\xi=
a_{ij}^{\xi\eta}\p_{x_j}w^\eta \nu_i,\;\;\;\xi=1,\dots,\ell,
\end{equation*}
for all solutions $\mathbf{w}$ to
\begin{equation*}\label{60915}
\begin{cases}
\partial_t\mathbf w+\mathbf L\mathbf w=0,\ &\text{in}\ \Omega\times(0,T),\\
\mathbf{w}=0,\ &\text{on}
\ \partial\Omega\times(0,T),\\
\mathbf{w}(T)=\mathbf{w}_T,
\ &\text{in}\ \Omega.
\end{cases}
\end{equation*}
with $\mathbf{w}_T$ in $L^2(\Omega)^\ell$. Hence, from the latter and the local energy bound for the system \eqref{anas},
we can derive the following.
\begin{lemma}\label{6101}
There is  $N=N(\Omega,\varrho,\delta)\geq1$ such that the inequality
\begin{equation*}\label{6102}
\begin{split}
&\|\mathbf{u}(T)\|_{L^2(\Omega)^\ell}\leq \left(e^{N/[\left(\epsilon_2-\epsilon_1\right)T]}
\|\tfrac{\p\mathbf{u}}{\p\mathbf{\nu}}\|
_{L^2(\partial\Omega\times(\epsilon_1T,\epsilon_2T))}\right)^{1/2}
\|\mathbf{u}_0\|
_{L^2(\Omega)^\ell}^{1/2},
\end{split}
\end{equation*}
holds for any $0\leq\epsilon_1<\epsilon_2\leq1$ and for all solutions $\mathbf u$ to \eqref{anas}.
\end{lemma}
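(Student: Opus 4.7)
The plan is to combine the full-interval boundary observability
\[
\|\mathbf{w}(0)\|_{L^2(\Omega)^\ell}\leq Ne^{N/T}\|\tfrac{\p\mathbf{w}}{\p\mathbf{\nu}}\|_{L^2(\partial\Omega\times(0,T))}
\]
displayed just above the lemma -- itself obtained by HUM duality from the interior null-controllability of Theorem \ref{614g1} applied on the analytic enlargement $\Omega_\rho$ -- with two routine energy inequalities, exploiting that \eqref{anas} has \emph{time-independent} coefficients.

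First, by autonomy of the system I translate time: the same observability holds on any subinterval $(a,a+\tau)\subset(0,T)$ with $N/T$ replaced by $N/\tau$. The time reversal $\mathbf{w}(x,t)=\mathbf{u}(x,a+\tau-t)$ turns a solution of the backward adjoint equation on $(a,a+\tau)$ into a solution of \eqref{anas} on the same interval. Choosing $a=\epsilon_1T$ and $\tau=(\epsilon_2-\epsilon_1)T$ I obtain
\[
\|\mathbf{u}(\epsilon_2T)\|_{L^2(\Omega)^\ell}\leq Ne^{N/[(\epsilon_2-\epsilon_1)T]}\,\|\tfrac{\p\mathbf{u}}{\p\nu}\|_{L^2(\partial\Omega\times(\epsilon_1T,\epsilon_2T))}.
\]
Next, the standard local energy estimate for \eqref{anas} -- obtained by testing the equation against $\mathbf{u}$, integrating over $\Omega$, and using strong ellipticity and the boundedness of the lower-order coefficients together with $0<T\leq 1$ -- gives both
\[
\|\mathbf{u}(T)\|_{L^2(\Omega)^\ell}\leq C\|\mathbf{u}(\epsilon_2T)\|_{L^2(\Omega)^\ell}\quad\text{and}\quad\|\mathbf{u}(T)\|_{L^2(\Omega)^\ell}\leq C\|\mathbf{u}_0\|_{L^2(\Omega)^\ell},
\]
with $C=C(\delta)\geq 1$.

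Finally, substituting the observability bound of the preceding display into the first of these two energy inequalities and multiplying the result by the second, I find
\[
\|\mathbf{u}(T)\|_{L^2(\Omega)^\ell}^2\leq C^2Ne^{N/[(\epsilon_2-\epsilon_1)T]}\,\|\tfrac{\p\mathbf{u}}{\p\nu}\|_{L^2(\partial\Omega\times(\epsilon_1T,\epsilon_2T))}\,\|\mathbf{u}_0\|_{L^2(\Omega)^\ell}.
\]
Taking square roots and absorbing $C^2N$ into the exponential, at the cost of enlarging $N$, yields the claim. No genuine obstacle arises: the conceptual input -- the full-interval observability inequality -- has already been assembled in the paragraph preceding the lemma, and what remains is nothing but time translation (available precisely because the coefficients are time-independent) glued to a geometric mean of the two directions of the a priori energy bound.
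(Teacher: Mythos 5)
Your argument is correct and is essentially the one the paper intends (the paper merely asserts the lemma ``from the latter and the local energy bound''): you use the time-translation invariance of \eqref{anas} to port the full-interval boundary observability displayed just above the lemma to the subinterval $(\epsilon_1T,\epsilon_2T)$, the time reversal to pass from the backward adjoint to the forward system (the conormal derivative is unchanged because the principal part is self-adjoint by \eqref{E: condicon5}), and then take the geometric mean of the two one-sided energy bounds $\|\mathbf u(T)\|\le C\|\mathbf u(\epsilon_2T)\|$ and $\|\mathbf u(T)\|\le C\|\mathbf u_0\|$, absorbing constants into the exponential. The details you supply — in particular noting that $0<T\le 1$ keeps the energy-growth factor from the lower-order terms uniformly bounded — are exactly what is needed to close the sketch the paper leaves implicit.
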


The Lemmas \ref{L: 6086} and \ref{6101} imply now with similar reasonings to the ones we used in \cite[Theorem 11]{AEWZ}, in the proof of Lemma \ref{6089}, as well as in the proofs of Theorem \ref{614g3} and Theorem \ref{nball}, that Theorems \ref{6090} and \ref{614g2} hold.

To prove Theorem~\ref{bangbang-onecontrol} we need to complete first the proof of Lemma \ref{higherderivative}. With this purpose, we begin with the following lemma.
\begin{lemma}\label{L:1}
Let $f:[0,1]\rightarrow \mathbb{R}$ be  an analytic function verifying
\begin{equation}\label{E:1}
\|f^{(m)}\|_{L^\infty(0,1)}\leq M\rho^{-m}m!,\ \text{when}\  m\geq0,
\end{equation}
for some $M>0$ and $0<\rho\leq 1/2$. Then
\begin{equation}\label{9241}
\|f^{(j)}\|_{L^\infty(0,1)}\leq
\big(8M(j+1)!\rho^{-j-1}\big)^{1-\frac{1}{2^j}}
\|f\|_{L^\infty(0,1)}^{\frac{1}{2^j}}\,,\;\;\;\text{when}\;\;
j\geq 0.
\end{equation}
\end{lemma}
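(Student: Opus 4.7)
The proof is by induction on $j \geq 0$; the case $j=0$ is trivial since $1 - 1/2^0 = 0$ reduces the inequality to $\|f\|_{L^\infty(0,1)} \leq \|f\|_{L^\infty(0,1)}$. For the inductive step ($j \geq 1$), the main ingredient is the one--dimensional Landau--Kolmogorov interpolation inequality on $[0,1]$: for any $g \in C^2([0,1])$,
\begin{equation*}
\|g'\|_{L^\infty(0,1)} \leq \max\bigl(\,2\sqrt{\|g\|_{L^\infty(0,1)}\|g''\|_{L^\infty(0,1)}}\,,\,8\|g\|_{L^\infty(0,1)}\bigr).
\end{equation*}
To derive this, for each $x \in [0,1]$ one chooses the sign of $h$ so that $x+h \in [0,1]$ and $|h| \leq 1/2$, and applies Taylor's formula with integral remainder, obtaining $|g'(x)| \leq 2\|g\|_{L^\infty(0,1)}/|h| + |h|\|g''\|_{L^\infty(0,1)}/2$. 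Optimizing over $|h|\in(0,1/2]$: if the unconstrained minimizer $2\sqrt{\|g\|_{L^\infty(0,1)}/\|g''\|_{L^\infty(0,1)}}$ lies in $(0,1/2]$ one recovers the first branch; otherwise $|h|=1/2$ produces the second (boundary) branch.

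Applying this interpolation with $g = f^{(j-1)}$, and writing $B_m := \bigl(8M(m+1)!\rho^{-m-1}\bigr)^{1-1/2^m}\|f\|_{L^\infty(0,1)}^{1/2^m}$ for the target bound, the inductive hypothesis gives $\|f^{(j-1)}\|_{L^\infty(0,1)} \leq B_{j-1}$ while the a priori assumption \eqref{E:1} provides $\|f^{(j+1)}\|_{L^\infty(0,1)} \leq M(j+1)!\rho^{-j-1}$. It remains to verify
\begin{equation*}
2\sqrt{B_{j-1}\cdot M(j+1)!\rho^{-j-1}} \leq B_j \quad \text{and} \quad 8B_{j-1} \leq B_j.
\end{equation*}
The first is immediate by direct computation of the exponents of $M$, $\rho$, and the factorials, using $1-1/2^{j-1} = 2(1-1/2^j)-1$ to match powers of $M$ and of $\|f\|_{L^\infty(0,1)}$; it actually gives $\leq B_j/\sqrt 2$.

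The delicate bound is $8B_{j-1} \leq B_j$: this is where the hypothesis $\rho \leq 1/2$ and the constant $8$ in \eqref{9241} are consumed. Using $\|f\|_{L^\infty(0,1)} \leq M$ (the $m=0$ case of \eqref{E:1}) to lower-bound the factor $\|f\|_{L^\infty(0,1)}^{-1/2^j}$ that appears in $B_j/B_{j-1}$, I would reduce the desired inequality to
\begin{equation*}
8^\gamma (j+1)^{1-\gamma}(j!)^\gamma \rho^{-1-(j-1)\gamma} \geq 8, \qquad \gamma = 1/2^j,
\end{equation*}
which is tight at $j=1$ (where it collapses to $4/\rho \geq 8$, i.e.\ $\rho \leq 1/2$), while for $j \geq 2$ the factor $(j+1)^{1-\gamma}$ supplies the necessary room. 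The main obstacle of the proof is precisely this boundary regime of Landau--Kolmogorov on a bounded interval: when the unconstrained optimum exceeds $1/2$ one must settle for the weaker pointwise bound $\|g'\|_{L^\infty(0,1)} \leq 8\|g\|_{L^\infty(0,1)}$, and the delicate accounting above is what allows the induction to close with the stated constants.
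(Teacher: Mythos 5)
Your proof is correct, but it follows a genuinely different route from the paper, and it is worth seeing why the paper's version is cleaner. You appeal to the two-branch Landau--Kolmogorov inequality
\[
\|g'\|_{L^\infty(0,1)}\le\max\Bigl(2\sqrt{\|g\|_{L^\infty(0,1)}\|g''\|_{L^\infty(0,1)}}\,,\;8\|g\|_{L^\infty(0,1)}\Bigr),
\]
plug in the inductive bound $B_{j-1}$ for $\|f^{(j-1)}\|$ and the a priori bound from \eqref{E:1} for $\|f^{(j+1)}\|$, and then must verify \emph{both} branches are $\le B_j$, which forces the delicate computation $8B_{j-1}\le B_j$ (reduced to $8^\gamma(j+1)^{1-\gamma}(j!)^\gamma\rho^{-1-(j-1)\gamma}\ge 8$, tight at $j=1$). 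The paper instead substitutes the a priori bound $M(k+1)!\rho^{-k-1}$ for $\|f^{(k+1)}\|$ \emph{before} optimizing over the step length $\varepsilon$, so that the inequality to minimize is $\varepsilon M(k+1)!\rho^{-k-1}+\tfrac{2}{\varepsilon}\|f^{(k-1)}\|$; the unconstrained minimizer $\varepsilon^*=\sqrt{2\|f^{(k-1)}\|/(M(k+1)!\rho^{-k-1})}$ then satisfies, using \eqref{E:1} for $\|f^{(k-1)}\|$, $\varepsilon^*\le\rho\sqrt{2/(k(k+1))}\le\rho\le 1/2$ for every $k\ge 1$, so the boundary branch never arises and the inductive step closes immediately via $\|f^{(k)}\|\le(8M(k+1)!\rho^{-k-1})^{1/2}\|f^{(k-1)}\|^{1/2}$ and the monotonicity of $8Mk!\rho^{-k}$ in $k$. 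In short: your approach works but pays for it with the boundary-branch accounting; the paper's ordering of the substitution and the optimization is precisely what makes that accounting unnecessary. One small caveat about your write-up: for $j=2,3,4$ the factor $(j+1)^{1-\gamma}$ alone does not ``supply the necessary room'' — you also need the $\rho^{-1-(j-1)\gamma}\ge 2^{1+(j-1)\gamma}$ contribution — though the combined inequality is indeed true for all $j\ge 1$ and $\rho\le 1/2$.
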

\begin{proof}
We prove it by induction and we assume that \eqref{9241}
holds for $(k-1)$, i.e.,
\begin{equation}\label{assumption}
\|f^{(k-1)}\|_{L^\infty(0,1)}\leq (8Mk!\rho^{-k})^{1-\frac{1}{2^{k-1}}}
\|f\|_{L^\infty(0,1)}^{\frac{1}{2^{k-1}}}
\end{equation}
and we  show that it is valid for $k$.
Let then $x\in[0,1]$. For $0<\varepsilon\leq1/2$ take either  $I=[x,x+\varepsilon]$ or $[x-\varepsilon,x]$, so that always $I\subset [0,1]$. Then,
\begin{equation*}
f^{(k)}(x)=f^{(k)}(y)+\int_y^x f^{(k+1)}(s)\,ds,\;\;\text{for all}\;\;y\in I.
\end{equation*}
Integrating the above identity with respect to $y$ over the interval $I$,  by \eqref{E:1} and the arbitrariness of $x$ in $[0,1]$,
we obtain that
\begin{equation}\label{l1}
\|f^{(k)}\|_{L^\infty(0,1)}\leq \varepsilon M(k+1)!\rho^{-k-1}+\frac{2}{\varepsilon}\,
\|f^{(k-1)}\|_{L^\infty(0,1)},
\end{equation}
when $k\geq1$ and $0<\varepsilon\leq1/2$. Choose now
$$\varepsilon=\Big(\frac{2\|f^{(k-1)}\|_{L^\infty(0,1)}}
{M(k+1)!\rho^{-k-1}}\Big)^{1/2}.$$
It can be checked by \eqref{E:1} that $\varepsilon\leq1/2$.
Hence, it follows from \eqref{l1} that
\begin{align*}
\|f^{(k)}\|_{L^\infty(0,1)}&\leq
\big(8M(k+1)!\rho^{-k-1}\big)^{1/2}
\|f^{(k-1)}\|_{L^\infty(0,1)}^{1/2}.
\end{align*}
This, together with \eqref{assumption}, leads to \eqref{9241} and
completes the proof.
\end{proof}

The rescaled and translated version of Lemma~\ref{L:1},
along with \cite[Lemma 13]{AEWZ},
imply the following.
\begin{lemma}\label{982}
Let $f$ be real-analytic in $[a,a+L]$ with $a$ in $\mathbb{R}$, $L>0$ and $E\subset[a,a+L]$ be a measurable set with positive measure. Assume there are constants $M>0$ and $0<\rho\leq1/2$ such that
\begin{equation*}
|f^{(m)}(x)|\leq M (2\rho L)^{-m}m!,\ \text{for}\ m\geq 0\ \text{and}\ a\le x\le a+L.
\end{equation*}
Then, there are $N=N(\rho,|E|/L)$ and $\theta=\theta(\rho, |E|/L)$ with $0<\theta<1$, such that
\begin{equation*}
\|f^{(k)}\|_{L^\infty(a,a+L)}\leq N\big(8(k+1)!(\rho L)^{-(k+1)}\big)
M^{1-\frac{\theta}{2^{k}}}\Big(
\text{\rlap|{$\int_{E}$}}|f|\,dx\Big)
^{\frac{\theta}{2^{k}}}\,,\;\;\text{when}\;\;k\geq0.
\end{equation*}
\end{lemma}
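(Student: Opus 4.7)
The plan is to reduce the problem to the interval $[0,1]$ by scaling, apply Lemma~\ref{L:1} to obtain an interpolation inequality between $\|g^{(k)}\|_{L^\infty}$ and $\|g\|_{L^\infty}$, and then apply \cite[Lemma 13]{AEWZ} (the one-dimensional propagation of smallness from measurable sets for analytic functions) to replace $\|g\|_{L^\infty}$ by the averaged $L^1$-norm over $E$.

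Concretely, I would set $g(y)=f(a+Ly)$ on $[0,1]$, so that $g^{(m)}(y)=L^m f^{(m)}(a+Ly)$ and the hypothesis becomes $|g^{(m)}(y)|\leq M(2\rho)^{-m}m!$ on $[0,1]$. Since $\rho\leq 1/2$, the quantity $\rho^\ast:=\min(2\rho,1/2)\in(0,1/2]$ satisfies $\rho^\ast\geq\rho$, and replacing $2\rho$ by $\rho^\ast$ only strengthens the estimate to $|g^{(m)}|\leq M(\rho^\ast)^{-m}m!$. The set $E':=\{y\in[0,1]:a+Ly\in E\}$ satisfies $|E'|=|E|/L$, and the change of variables gives $\text{\rlap|{$\int_{E'}$}}|g|\,dy=\text{\rlap|{$\int_{E}$}}|f|\,dx$.

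Lemma~\ref{L:1} then supplies
\begin{equation*}
\|g^{(k)}\|_{L^\infty(0,1)}\leq\bigl(8M(k+1)!(\rho^\ast)^{-(k+1)}\bigr)^{1-1/2^k}\|g\|_{L^\infty(0,1)}^{1/2^k},
\end{equation*}
while \cite[Lemma 13]{AEWZ}, applied under the same derivative bound, furnishes constants $N^\ast=N^\ast(\rho^\ast,|E'|)\geq 1$ and $\theta=\theta(\rho^\ast,|E'|)\in(0,1)$ with
\begin{equation*}
\|g\|_{L^\infty(0,1)}\leq N^\ast M^{1-\theta}\Bigl(\text{\rlap|{$\int_{E'}$}}|g|\,dy\Bigr)^{\theta}.
\end{equation*}
Substituting the second display into the first and collecting the two $M$-exponents into $1-\theta/2^k$ yields the desired form of the right hand side. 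I would then simplify the prefactor by the elementary bounds $8^{1-1/2^k}\leq 8$, $(k+1)!^{1-1/2^k}\leq(k+1)!$, $(\rho^\ast)^{-(k+1)(1-1/2^k)}\leq(\rho^\ast)^{-(k+1)}\leq\rho^{-(k+1)}$, and $(N^\ast)^{1/2^k}\leq N^\ast$. Finally, translating back through $\|f^{(k)}\|_{L^\infty(a,a+L)}=L^{-k}\|g^{(k)}\|_{L^\infty(0,1)}$ delivers the claimed estimate, with $N$ and $\theta$ depending only on $\rho$ and $|E|/L$ since both $\rho^\ast$ and $|E'|$ do.

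The main bookkeeping point is matching the $L$-dependence of the prefactor to the stated form $8(k+1)!(\rho L)^{-(k+1)}$: one has to distribute the $L^{-k}$ from the inverse scaling and absorb the remaining universal constants into $N$. Everything else is a routine combination of an interpolation with derivative bounds (Lemma~\ref{L:1}) and the Remez-type inequality on measurable sets of \cite[Lemma 13]{AEWZ}.
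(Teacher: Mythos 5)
Your route---rescale to $[0,1]$, interpolate via Lemma~\ref{L:1}, propagate smallness via \cite[Lemma 13]{AEWZ}, then undo the scaling---is precisely what the paper intends; the paper's own ``proof'' is nothing more than the one-sentence assertion that the rescaled and translated Lemma~\ref{L:1}, together with \cite[Lemma 13]{AEWZ}, imply the claim, so your write-up is a faithful expansion of it.

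Be aware, though, that the ``routine bookkeeping'' you wave at in the last paragraph does not land on the printed prefactor. Undoing the scaling with $\|f^{(k)}\|_{L^\infty(a,a+L)}=L^{-k}\|g^{(k)}\|_{L^\infty(0,1)}$ produces a factor of order $(k+1)!\,\rho^{-(k+1)}L^{-k}$, whereas the statement demands $(k+1)!\,(\rho L)^{-(k+1)}=(k+1)!\,\rho^{-(k+1)}L^{-(k+1)}$; the discrepancy is a factor of $L^{-1}$, which cannot be absorbed into $N$ because $N$ is permitted to depend only on $\rho$ and $|E|/L$. A quick dilation check confirms the issue: fix $g$ on $[0,1]$ and $E'\subset[0,1]$ with $|E'|>0$, set $f(x)=g\big((x-a)/L\big)$ and $E=\{a+Ly:\,y\in E'\}$. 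All hypotheses scale consistently with $M$, $\rho$ and $|E|/L$ held fixed, yet after multiplying both sides of the stated inequality by $L^{k}$ the right-hand side still carries a factor $L^{-1}\to 0$ while the left-hand side is a fixed positive number---so the exponent $-(k+1)$ on $L$ is in fact a typo. The correct power, and the one your argument actually delivers, is $L^{-k}$. This is inconsequential in the paper, where the lemma is only invoked with $L\le 1$, but you should not present the bookkeeping as closing the gap to the printed form, since it genuinely does not.
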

\bigskip

Next, we derive the multi-dimensional analogs of Lemmas~\ref{L:1} and \ref{982}.

\begin{lemma}\label{L:2} Let $n\ge 1$ and $f:Q\subset\mathbb{R}^n\rightarrow \mathbb{R}$, with $Q=[0,1]
\times\cdots\times[0,1]$, be a real-analytic function verifying
\begin{equation}\label{9222}
\|\partial_{x_1}^{\beta_1}\cdots\partial_{x_n}^{\beta_n}f\|
_{L^\infty(Q)}\leq M\rho^{-|\beta|}\beta_1!\cdots\beta_n!,\;\;
\forall \beta=(\beta_1,\dots,\beta_n)\in\mathbb{N}^n,
\end{equation}
for some  $M>0$ and $0<\rho\leq1/2$. Then,
\begin{equation}\label{9225}
\|\partial_{x_1}^{\alpha_1}\cdots\partial_{x_n}^{\alpha_n}
f\|_{L^\infty(Q)}\leq \Big(8M\rho^{-|\alpha|-1}\prod_{i=1}^{n}(\alpha_i+1)!\Big)
^{1-\frac{1}{2^{|\alpha|}}}\|f\|_{L^\infty(Q)}^{
\frac{1}{2^{|\alpha|}}}.
\end{equation}
holds for each $\alpha=(\alpha_1,\dots,\alpha_n)\in\mathbb{N}^n$.
\end{lemma}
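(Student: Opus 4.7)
The plan is to apply the one-dimensional Lemma \ref{L:1} iteratively, one coordinate at a time, building up the mixed partial derivative $\partial^\alpha f$ by differentiating first in $x_1$, then in $x_2$, and so on through $x_n$.

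For each $k\in\{1,\dots,n\}$, fix the remaining coordinates and view
$g_k(t):=\partial_{x_1}^{\alpha_1}\cdots\partial_{x_{k-1}}^{\alpha_{k-1}}f(x_1,\dots,t,\dots,x_n)$
as a function of $t=x_k\in[0,1]$. Since $g_k^{(m)}(t)=\partial_{x_k}^m\partial_{x_1}^{\alpha_1}\cdots\partial_{x_{k-1}}^{\alpha_{k-1}}f$, the hypothesis \eqref{9222} gives
$|g_k^{(m)}(t)|\le M_k\,\rho^{-m}m!$, where $M_k:=M\,\rho^{-(\alpha_1+\cdots+\alpha_{k-1})}\alpha_1!\cdots\alpha_{k-1}!$. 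Applying Lemma \ref{L:1} to $g_k$ with $j=\alpha_k$ (which is legitimate because $0<\rho\le 1/2$) and then taking the supremum over the remaining coordinates yields
\begin{equation*}
\|\partial_{x_1}^{\alpha_1}\cdots\partial_{x_k}^{\alpha_k}f\|_{L^\infty(Q)}\le A_k^{1-1/2^{\alpha_k}}\,\|\partial_{x_1}^{\alpha_1}\cdots\partial_{x_{k-1}}^{\alpha_{k-1}}f\|_{L^\infty(Q)}^{1/2^{\alpha_k}},
\end{equation*}
with $A_k:=8M_k(\alpha_k+1)!\,\rho^{-\alpha_k-1}$. The case $\alpha_k=0$ is trivial.

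Chaining these $n$ estimates produces
$\|\partial^\alpha f\|_{L^\infty(Q)}\le \bigl(\prod_{k=1}^n A_k^{\mu_k}\bigr)\|f\|_{L^\infty(Q)}^{1/2^{|\alpha|}}$
with exponents $\mu_k=(1-1/2^{\alpha_k})/2^{\alpha_{k+1}+\cdots+\alpha_n}$. Setting $s_k:=\alpha_k+\cdots+\alpha_n$ (so $s_{n+1}=0$ and $s_1=|\alpha|$), one has $\mu_k=2^{-s_{k+1}}-2^{-s_k}$, and the sum telescopes to $\sum_{k=1}^n\mu_k=1-1/2^{|\alpha|}$, matching the target exponent on the constant in \eqref{9225}, while the exponent $1/2^{|\alpha|}$ on $\|f\|_{L^\infty(Q)}$ already has the right form.

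The only remaining point is to replace the product $\prod_k A_k^{\mu_k}$ by $A^{1-1/2^{|\alpha|}}$ with $A:=8M\rho^{-|\alpha|-1}\prod_{i=1}^n(\alpha_i+1)!$. Since
\begin{equation*}
A/A_k=\rho^{-(\alpha_{k+1}+\cdots+\alpha_n)}\prod_{j<k}(\alpha_j+1)\prod_{j>k}(\alpha_j+1)!\ge 1
\end{equation*}
(using $\rho\le 1$), each $A_k\le A$, so $\prod_k A_k^{\mu_k}\le A^{\sum_k\mu_k}=A^{1-1/2^{|\alpha|}}$, which is exactly \eqref{9225}. The main obstacle is purely bookkeeping: telescoping the exponents in the iterated interpolation and dominating each intermediate constant $A_k$ by the final $A$; no analytic content beyond Lemma \ref{L:1} is needed.
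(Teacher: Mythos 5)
Your proof is correct and follows essentially the same route as the paper: apply the one-dimensional Lemma~\ref{L:1} coordinate by coordinate (the paper peels off $x_n$, then $x_{n-1}$, etc., which is the same iteration in the opposite order), chaining the resulting interpolation inequalities. The only place the paper leaves anything implicit is its concluding sentence ``the iteration of the above arguments $n$ times leads to the desired estimates,'' whereas you spell out the bookkeeping: you compute the exponents $\mu_k=2^{-s_{k+1}}-2^{-s_k}$, show they telescope to $1-2^{-|\alpha|}$, and then dominate each intermediate constant $A_k$ by the global constant $A=8M\rho^{-|\alpha|-1}\prod_i(\alpha_i+1)!$ using $\rho\le 1$ and $A_k\le A$, so that $\prod_k A_k^{\mu_k}\le A^{\sum_k\mu_k}$. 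This is exactly what the paper's iteration amounts to, just made explicit, so the argument is the same; your version is slightly more careful about the final reduction to the stated form of the constant.
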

\begin{proof}
First, notice that Lemma~\ref{L:1} corresponds to Lemma \ref{L:2}, when $n=1$.  Let now $n\geq2$ and $\alpha=(\alpha_1,\dots,\alpha_n)$ be in $\mathbb{N}^n$. For $(x_1,\dots,x_{n-1})$ in $[0,1]\times\cdots\times[0,1]$, define
the function $g_n:[0,1]\rightarrow\mathbb{R}$ by
$$g_n(x_n)\triangleq\partial_{x_1}^{\alpha_1}\cdots
\partial_{x_{n-1}}^{\alpha_{n-1}}f(x_1,\cdots,x_{n-1},x_{n}).$$
It follows from \eqref{9222} that
\begin{equation*}
\|\partial_{x_n}^{\beta_n}g_n\|_{L^\infty([0,1])}\leq \Big(M\alpha_1!\cdots\alpha_{n-1}!\rho^{-\sum_{j=1}^{n-1}\alpha_j}\Big)
\beta_n!\rho^{-\beta_n},\ \text{for all}\  \beta_n\geq0,
\end{equation*}
and Lemma~\ref{L:1}  yields that
\begin{multline*}
\|\partial_{x_1}^{\alpha_1}\cdots\partial_{x_n}^{\alpha_n}
f\|_{L^\infty(Q)}\\
\;\leq
\Big(8M\alpha_1!\cdots\alpha_{n-1}!\rho^{-\sum_{j=1}^{n-1}\alpha_j}(\alpha_n+1)!\rho^{-\alpha_n-1}\Big)
^{1-\frac{1}{2^{\alpha_n}}}\|
\partial_{x_1}^{\alpha_1}\cdots\partial_{x_{n-1}}^{\alpha_
{n-1}}f\|_{L^\infty(Q)}^{\frac{1}{2^{\alpha_{n}}}}.
\end{multline*}
Similarly, we can show that $\|\partial_{x_1}^{\alpha_1}\cdots\partial_{x_{n-1}}^{\alpha_{n-1}}
f\|_{L^\infty(Q)}$ is less or equal than
\begin{equation*}
\Big(8M\alpha_1!\cdots\alpha_{n-2}!\rho^{-\sum_{j=1}^{n-2}\alpha_j}(\alpha_{n-1}+1)!\rho^{-\alpha_{n-1}-1}\Big)
^{1-\frac{1}{2^{\alpha_{n-1}}}}\|
\partial_{x_1}^{\alpha_1}\cdots\partial_{x_{n-2}}^{\alpha_
{n-2}}f\|_{L^\infty(Q)}^{\frac{1}{2^{\alpha_{n-1}}}}.
\end{equation*}
The iteration of the above arguments $n$ times leads to the desired estimates in \eqref{9225}.
\end{proof}

The rescaled  and translated versions of Lemma~\ref{L:2} and of Lemma~\ref{propagation} (when $\Omega$ is the unit ball or cube in $\Rn$) and the fact that a ball in $\mathbb{R}^n$ contains a cube of comparable diameter and vice versa are seen to impy Lemma \ref{higherderivative} .

Finally, we give the proof of Theorem~\ref{bangbang-onecontrol}, where we use Lemma~\ref{982} with $k=1$ and Lemma \ref{higherderivative} with $|\alpha|\le 2$.

\begin{proof}[Proof of Theorem~\ref{bangbang-onecontrol}]
Since $b(\cdot)\not\equiv0$ in $\Omega$ and $b$ is real-analytic in $\overline\Omega$, we may assume without loss of generality, that $|b(x)|\geq1$ over some ball $B_{R}(x_0)\subset\Omega$ and that $\mathcal{D}\subset B_R(x_0)\times(0,T)$.
By Lemma \ref{L: 6086}, for $x$ in $\overline\Omega$ and $0\le s<t$,
\begin{multline}\label{jingcan1}
|\partial_x^\alpha\partial_t^p u(x,t)|+
|\partial_x^\alpha\partial_t^p v(x,t)|\\
\le e^{1/\rho\left(t-s\right)} |\alpha|!\,p!\,\rho^{-|\alpha |-p}\left(t-s\right)^{-p}
\big[\|u(s)\|_{L^2(\Omega)}+\|v(s)\|
_{L^2(\Omega)}\big],
\end{multline}
for all $\alpha\in\mathbb{N}^n$ and $p\in\mathbb{N}$, with $\rho=\rho(\delta)$, $0<\rho\le 1$.
Hence, we can get  from \eqref{difb} that
\begin{multline*}
\|u(t)\|_{L^2(\Omega)}+\|v(t)\|
_{L^2(\Omega)}
\leq \\
\Big(\int_{B_R(x_0)}|u(x,t)|+|v(x,t)|\,dx
\Big)^\theta
\Big(Ne^{N/(t-s)}\big(\|u(s)\|_{L^2(\Omega)}+\|v(s)\|
_{L^2(\Omega)}\big)\Big)^{1-\theta},
\end{multline*}
with $N=N(\Omega,\rho,R)$ and $\theta=\theta(\Omega,\rho,R)$, $0<\theta<1$.
This, together with the fact that $|b(x)|\geq1$ over $B_R(x_0)$
and the first equation in \eqref{heats2}, yield that
\begin{multline}\label{jingcan8}
\|u(t)\|_{L^2(\Omega)}+\|v(t)\|
_{L^2(\Omega)}\\
\leq
\Big(\int_{B_R(x_0)}|u(x,t)|+|\partial_t u(x,t)|
+|\Delta u(x,t)|\,dx
\Big)^\theta\\
\;\;\;\;\;\;\;\times
\Big(Ne^{N/(t-s)}\big(\|u(s)\|_{L^2(\Omega)}+\|v(s)\|
_{L^2(\Omega)}\big)\Big)^{1-\theta},\ \text{when}\ 0\leq s<t.
\end{multline}
Next, let $\eta\in(0,1)$ and $0\leq t_1<t_2$. Also, assume that $E\subset(0,T)$ is a measurable set
with $|E\cap(t_1,t_2)|\geq \eta(t_2-t_1)$, for some $\eta\in (0,1)$, and that for each $t\in E$, $|\mathcal{D}_t|\triangleq|\{x\in\Omega: (x,t)\in\mathcal{D}\}|\geq \gamma|\mathcal D|$, for some $\gamma>0$.
Set then
$$\tau=t_1+\frac{\eta}{10}(t_2-t_1)\ \text{and}\ F=[\tau,t_2]\cap E.$$
Clearly, $|F|\geq\frac\eta 2\,(t_2-t_1)$. Hence, it follows from \eqref{jingcan1} that when
$t\in[\tau,t_2]$ and $x$ is in $\Omega$
\begin{equation*}\label{jingcan4}
|\partial_t^p u(x,t)|\leq \frac{p!Ne^{N/\eta(t_2-t_1)}}{(\eta(t_2-t_1)/20)^p}  \big(\|u(t_1)\|_{L^2(\Omega)}+\|v(t_1)\|
_{L^2(\Omega)}\big),\ \text{for all}\ p\in\mathbb{N},
\end{equation*}
with $N=N(\Omega,\rho)$.
By Lemma~\ref{982}, we have that for each $x$ in $\Omega$
\begin{multline*}
\|\partial_t u(x,\cdot)\|_{L^\infty([\tau,t_2])}
\leq \\\Big(\int_{F}|u(x,s)|\,ds\Big)^{\theta}
\Big(Ne^{N/(t_2-t_1)}\big(\|u(t_1)\|_{L^2(\Omega)}+
\|v(t_1)\|
_{L^2(\Omega)}\big)\Big)^{1-\theta},
\end{multline*}
with $N=N(\Omega,\rho,\eta)$ and $\theta=\theta(\Omega,\rho,\eta)$, $0<\theta<1$.
Hence, by H\"{o}lder's inequality
\begin{multline}\label{jingcan5}
\int_{B_R(x_0)}|\partial_t u(x,t)|\,dx
\leq \\
\Big(Ne^{N/(t_2-t_1)}\big(\|u(t_1)\|_{L^2(\Omega)}+
\|v(t_1)\|
_{L^2(\Omega)}\big)\Big)^{1-\theta}
\Big(\int_{F}\int_{B_R(x_0)}|u(x,s)|\,dxds\Big)^{\theta}
\end{multline}
when $\tau\le t\le t_2$. It also follows from \eqref{jingcan1} that when $\tau\le t\le t_2$ and $x$ is in $\Omega$, we have
\begin{equation*}\label{jingcan10}
|\partial_x^\alpha u(x,t)|\le |\alpha|!\rho^{-|\alpha |} N e^{N/\left(t_2-t_1\right)} \big(\|u(s)\|_{L^2(\Omega)}+\|v(s)\|
_{L^2(\Omega)}\big),\  \text{for all}\ \alpha\in\mathbb{N}^n,
\end{equation*}
with $N=N(\Omega,\rho,\eta)$.
 Now, it holds that for each $t\in F$, $|\mathcal{D}_t|\geq \gamma |\mathcal D|$, and it follows from Theorem~\ref{higherderivative} that
\begin{multline}\label{jingcan6}
\int_{B_R(x_0)}|u(x,t)|\,dx\leq\\
\Big(\int_{\mathcal{D}_t}|u(x,t)|\,dx\Big)^\theta
\Big(Ne^{N/(t_2-t_1)}\big(\|u(t_1)\|_{L^2(\Omega)}+
\|v(t_1)\|
_{L^2(\Omega)}\big)\Big)^{1-\theta}
\end{multline}
and
\begin{multline}\label{jingcan12}
\int_{B_R(x_0)}|\Delta u(x,t)|\,dx\leq\\
\Big(\int_{\mathcal{D}_t}|u(x,t)|\,dx\Big)^\theta
\Big(N e^{N/\left(t_2-t_1\right)} \big(\|u(t_1)\|_{L^2(\Omega)}+\|v(t_1)\|
_{L^2(\Omega)}\big)\Big)^{1-\theta}.
\end{multline}
with $N=N(\Omega,|\mathcal D|,R,\rho,\eta)$ and $\theta=\theta(\Omega,|\mathcal D|,R,\rho,\eta)$, $0<\theta<1$.
Hence, \eqref{jingcan5} and \eqref{jingcan6}, as well as H\"{o}lder's inequality imply that
\begin{multline*}
\int_{B_R(x_0)}|\partial_t u(x,t)|\,dx
\leq\\
 \Big(\int_{t_1}^{t_2}\chi_{E}(s)\|u(s)\|
_{L^1(\mathcal{D}_s)}\,ds\Big)^{\theta}\Big(Ne^{N/(t_2-t_1)}
\big(\|u(t_1)\|_{L^2(\Omega)}+\|v(t_1)\|
_{L^2(\Omega)}\big)\Big)^{1-\theta},
\end{multline*}
when $t\in F$.
This, together with the inequalities \eqref{jingcan8}, \eqref{jingcan6}, \eqref{jingcan12} and H\"{o}lder's inequality,  yield  that the inequality
\begin{multline*}
\|u(t)\|_{L^2(\Omega)}+\|v(t)\|
_{L^2(\Omega)}
\leq
\Big(\int_{t_1}^{t_2}\chi_{E}(s)\|u(s)\|
_{L^1(\mathcal{D}_s)}\,ds+
\int_{\mathcal{D}_t}|u(x,t)|\,dx\Big)^{\theta}\\
\;\;\;\;\times
\Big(N e^{N/\left(t_2-t_1\right)} \|u(t_1)\|_{L^2(\Omega)}+\|v(t_1)\|
_{L^2(\Omega)}\Big)^{1-\theta},
\end{multline*}
holds for $t\in F$.
Integrating the above inequality with respect to time over the set $F$, recalling that $|F|\geq \frac\eta 2(t_2-t_1)$, using the energy estimate  for solutions to the equations \eqref{heats2} and H\"{o}lder's inequality,  we find  that
\begin{multline*}
\|u(t_2)\|_{L^2(\Omega)}+\|v(t_2))\|_{L^2(\Omega)}
\leq\\
 \Big(\int_{t_1}^{t_2}\chi_{E}(t)\|u(t)\|
_{L^1(\mathcal{D}_t)}\,dt\Big)^{\theta}
\Big(N e^{N/\left(t_2-t_1\right)} \big(\|u(t_1)\|_{L^2(\Omega)}+\|v(t_1))\|
_{L^2(\Omega)}\big)\Big)^{1-\theta},
\end{multline*}
with $N=N(\Omega,|\mathcal D|, R,\rho,\eta)$ and $\theta=\theta(\Omega,|\mathcal D|, R,\rho,\eta)$, $0<\theta<1$.

Finally, by Fubini's theorem and following the reasonings within the second part of the proof of Theorem~\ref{6106} (i.e., the telescoping series method)  we can also derive the desired observability estimate in Theorem \ref{bangbang-onecontrol}.
\end{proof}
\section{Applications to control theory}\label{S:5}
In this Section, we show several applications
of Theorems~\ref{6131}, \ref{6106}, \ref{nball} and \ref{bangbang-onecontrol}  in control theory. One can also obtain  analogous  applications of Theorems~\ref{614g3}, \ref{6090}, \ref{614g2} and \ref{614g1}.

First of all, we can apply Theorem~\ref{6131} to get the bang-bang property of the time optimal control problems for the higher
order parabolic equations \eqref{E: 6081}: let $\Omega$ be
a bounded domain with analytic boundary and $\omega\subset\Omega$ a non-empty open set (or a measurable set with positive measure). Define for each $M>0$ a control constraint set
\begin{equation*}
\mathcal{U}_1^M\triangleq\Big\{f:\Omega\times\mathbb{R}^+\rightarrow
\mathbb{R} \;\;\text{measurable} : |f(x,t)|\leq M,
\;\;\text{a.e. in}\ \Omega\times\mathbb{R}^+
\Big\}.
\end{equation*}
For each $u_0$ in $L^2(\Omega)\setminus\{0\}$, consider the time optimal control problem
$$
(TP)_1^M:\;\;\;\;\;\;T^M_1\triangleq \inf_{\mathcal{U}^M_1}
\big\{t>0;\;u(t;u_0,f)=0\big\},
$$
where $u(\cdot\;;u_0,f)$ is the solution to the controlled problem
\begin{equation*}
\begin{cases}
\p_t u+(-1)^m\Delta^m u=\chi_\omega f,\ &\text{in}\ \Omega\times(0,+\infty),\\
u=\nabla u=\dots=\nabla^{m-1}u=0,\ &\text{on}\ \p\Omega\times(0,+\infty),\\
u(0)=u_0.\ &\text{in}\ \Omega.
\end{cases}
\end{equation*}
According to Theorem \ref{6131} and  \cite[Theorem 3.3]{PWZ1}, $T^M_1$ is a positive minimum. A control function $f$ in $\mathcal{U}^M_1$ associated to $T^M_1$ is called an optimal control to this problem. Then, the methods in \cite[\S 5]{AEWZ} (See also \cite{W1} or \cite{PW1}), and the fact that standard duality (HUM method) and Theorem \ref{6131} imply the null controllability at all times $T>0$ of the system \eqref{E: 6081} with bounded controls acting over measurable sets within $\omega\times (0,T)$, give the following result.
\begin{corollary}
Problem $(TP)_1^M$ has the bang-bang property: any time optimal control $f$ satisfies, $|f(x,t)|=M$, for a.e. $(x,t)$ in $\omega\times (0, T_1^M)$. Consequently, the problem has a unique time optimal control.
\end{corollary}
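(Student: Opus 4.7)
The plan is to combine Theorem~\ref{6131} with the HUM duality method and the classical equivalence between time-optimal and $L^\infty$-norm-optimal null-controls, essentially following \cite[\S 5]{AEWZ} (see also \cite{PW1,W1}). The first step I would carry out is purely functional-analytic: applying standard duality \cite{Lions1} to the observability inequality of Theorem~\ref{6131} yields $L^\infty$-null-controllability from arbitrary measurable subsets of positive measure, namely, for every measurable $\mathcal D\subset\omega\times(0,T)$ with $|\mathcal D|>0$ and every $u_0\in L^2(\Omega)$, there exists $g\in L^\infty(\omega\times(0,T))$ with $\operatorname{supp} g\subset \mathcal D$ and $\|g\|_{L^\infty}\le N(\Omega,T,\mathcal D)\|u_0\|_{L^2(\Omega)}$ driving $u_0$ to zero at time $T$. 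Existence of a time-optimal $f^*\in\mathcal U_1^M$ attaining $T_1^M>0$ then follows from weak-$*$ sequential compactness of $\mathcal U_1^M$ together with continuity of the end-state map; positivity $T_1^M>0$ comes from parabolic regularity and backward uniqueness for \eqref{E: 6081} when $u_0\neq 0$.

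The heart of the argument is the bang-bang property, which I would establish by passing to the minimum-$L^\infty$-norm formulation. I would first verify the standard equivalence: $f^*$ is time-optimal if and only if it is a minimum-$L^\infty$-norm null-control at time $T_1^M$ whose norm equals $M$. Both directions rest on strict monotonicity and continuity of $T\mapsto \min\{\|f\|_{L^\infty} : f \text{ drives }u_0\text{ to }0\text{ by time }T\}$, both of which follow from the $L^\infty$-null-controllability of step one. With this reduction, a Hahn--Banach / KKT argument applied to the $L^\infty$-minimization produces a nontrivial Lagrange multiplier $\varphi_T\in L^2(\Omega)$ such that, with $\varphi$ the backward adjoint solution of \eqref{E: 6081} with terminal datum $\varphi_T$,
\begin{equation*}
f^*(x,t)=M\,\sgn\varphi(x,t)\quad\text{a.e.\ on}\quad \{(x,t)\in\omega\times(0,T_1^M):\varphi(x,t)\neq 0\}.
\end{equation*}
The bang-bang property is therefore equivalent to showing that $\{\varphi=0\}\cap\omega\times(0,T_1^M)$ has zero Lebesgue measure.

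The main obstacle, and the step where the preceding analyticity machinery really enters, is ruling out that $\varphi$ vanishes on a set of positive measure in $\omega\times(0,T_1^M)$. Here I would invoke the space-time analyticity of Lemma~\ref{L: 6086}: $\varphi$ is real-analytic in $(x,t)$ on $\overline\Omega\times(0,T_1^M)$, so if it vanished on a positive-measure subset of $\omega\times(0,T_1^M)$ then by Fubini and one-dimensional analyticity in $t$ it would vanish on $\omega$ for all $t$ in some interval, and real-analyticity in $x$ would then extend the vanishing to all of $\Omega\times(0,T_1^M)$, contradicting $\varphi_T\neq 0$ by backward uniqueness. Equivalently, Theorem~\ref{6131} applied after a time reversal to $\tilde\varphi(x,t)=\varphi(x,T_1^M-t)$, with observation set the image of $\{\varphi=0\}$, immediately gives $\tilde\varphi\equiv 0$ and produces the same contradiction.

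Uniqueness is then an immediate consequence of bang-bang. If $f_1,f_2\in\mathcal U_1^M$ were both time-optimal, linearity of \eqref{E: 6081} would make $(f_1+f_2)/2$ time-optimal as well; the bang-bang property applied to the midpoint forces $|(f_1+f_2)/2|=M$ a.e., and combined with the pointwise constraints $|f_1|,|f_2|\le M$ this forces $f_1=f_2$ a.e.\ on $\omega\times(0,T_1^M)$.
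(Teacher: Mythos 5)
Your proposal is mathematically sound but follows a genuinely different route from the paper, which simply defers to the direct contradiction argument of \cite[\S 5]{AEWZ}. The paper's route (AEWZ \S 5) proves bang-bang by supposing that $\mathcal D_\epsilon=\{(x,t)\in\omega\times(0,T_1^M):|f^*(x,t)|\le M-\epsilon\}$ has positive measure for some $\epsilon>0$, and then using the null-controllability from a (time-shifted) positive-measure subset of $\mathcal D_\epsilon$ to correct the small $L^2$ defect $u_0-u^*(\delta)$ with a control of $L^\infty$-norm $\le\epsilon$ supported in $\mathcal D_\epsilon$. The modified control $f^*(\cdot,\cdot+\delta)+g$ then drives $u_0$ to zero at time $T_1^M-\delta$ while remaining in $\mathcal U_1^M$, contradicting time-optimality. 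No adjoint state or Pontryagin-type multiplier appears; the contradiction is produced entirely through the constructive controllability result. Your route instead passes through the equivalence of time-optimal and minimal-$L^\infty$-norm controls, extracts a nontrivial adjoint multiplier $\varphi_T$ by Hahn--Banach/KKT, obtains the maximum-principle representation $f^*=M\sgn\varphi$ on $\{\varphi\neq 0\}$, and finally kills the degenerate set $\{\varphi=0\}\cap\omega\times(0,T_1^M)$ using the space-time analyticity of Lemma~\ref{L: 6086} (or, equivalently, Theorem~\ref{6131} applied after time-reversal). Both strategies rest on exactly the same two hard ingredients — observability/controllability over measurable sets and quantitative analyticity — but deploy them at different points. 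The direct approach is more elementary and self-contained (no functional-analytic separation argument in the non-reflexive space $L^\infty$, no need to justify existence and nontriviality of the multiplier), whereas your dual approach gives more structural information, namely the explicit sign representation of the optimal control in terms of the adjoint. Two small cautions on your route: the equivalence ``time-optimal $\Leftrightarrow$ norm-optimal with norm $=M$'' and the existence of a nontrivial $\varphi_T\in L^2(\Omega)$ both require some justification that your sketch glosses over (continuity and strict monotonicity of $T\mapsto$ minimal control norm, and a separation argument in a non-reflexive setting); these are standard but not automatic, and they are precisely what the direct argument avoids.
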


Theorem \ref{6106} implies a weak bang-bang property for the time optimal boundary control problems for the fourth order parabolic equation \eqref{6084}: let $\Omega$ be as above and $\Gamma\subset\p\Omega$ be a non-empty open subset (or a measurable set in $\partial\Omega$ with positive surface measure). Define for each $M>0$ the control constraint set
\begin{equation*}
\begin{split}
\mathcal{U}_2^M\triangleq&\Big\{(g_1,g_2):\p\Omega\times
\mathbb{R}^+\rightarrow
\mathbb{R}^2 \;\;\text{measurable}\;;\;\\
&\;\;\;\;\max\big\{|g_1(x,t)|,|g_2(x,t)|\big\}\leq M,\;\;\text{a.e.}\;\; (x,t)\in \Omega\times\mathbb{R}^+
\Big\}.
\end{split}
\end{equation*}
For each $u_0$ in $L^2(\Omega)\setminus\{0\}$ consider the time optimal boundary control problem
\begin{equation*}
(TP)_2^M:\quad T^M_2\triangleq \inf_{\mathcal{U}^M_2}
\big\{t>0;\;u(t;u_0,g_1,g_2)=0\big\},
\end{equation*}
where $u(\cdot\;;u_0,g_1,g_2)$ denotes the solution to the boundary controlled parabolic equation
\begin{equation}\label{E: unprobelamjo}
\begin{cases}
\p_t u+\Delta^2 u=0,\ &\text{in}\ \Omega\times(0,T),\\
u=g_1 \chi_{\Gamma},\;\; \frac{\p u}{\p\nu}=g       _2 \chi_{\Gamma}, &\text{on}\ \p\Omega\times(0,T),\\
u(0)=u_0,\ &\text{in}\ \Omega.
\end{cases}
\end{equation}
From Theorem~\ref{6106} and arguments as those in the proof of \cite[Lemma 15]{AEWZ}, $T^M_2$  is a positive minimum. A control pair of functions $(g_1,g_2)$ associated to $T^M_2$ is called an optimal control to this problem. From Theorem \ref{6106} and similar
methods to those in \cite[\S 5]{AEWZ}, give the following non-standard bang-bang property:
\begin{corollary}\label{C: colotati}
Problem $(TP)_2^M$ has the weak bang-bang property: any time optimal control $(g_1,g_2)$ satisfies that $\max\big\{|g_1(x,t)|,|g_2(x,t)|\big\}=M$, for a.e. $(x,t)$ in $\Gamma\times (0, T_2^M)$.
\end{corollary}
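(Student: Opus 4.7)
The plan follows the pattern established in \cite[\S 5]{AEWZ}: produce bounded null-controls via HUM duality from Theorem~\ref{6106}, identify any time-optimal control as a minimum-$\ell^\infty$-norm null-control at time $T^M_2$, and then extract the weak bang-bang property from Hahn--Banach duality combined with the unique-continuation content of Theorem~\ref{6106}.

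First I would apply the standard HUM argument (cf.\ \cite{Lions1}) to Theorem~\ref{6106} to obtain, for every $u_0\in L^2(\Omega)$ and every $T>0$, boundary controls $(g_1,g_2)\in L^\infty(\Gamma\times(0,T))^2$ that drive \eqref{E: unprobelamjo} to rest at time $T$ and satisfy $\max\{\|g_1\|_\infty,\|g_2\|_\infty\}\le Ne^{N/T^{1/3}}\|u_0\|_{L^2(\Omega)}$. This ensures that $T^M_2$ is finite, that $T^M_2>0$ when $u_0\neq 0$, and that an optimal control $(g_1^*,g_2^*)\in\mathcal U_2^M$ exists by weak-$\ast$ compactness and continuity of the control-to-state map. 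Writing $V(T)=\inf\{\max(\|g_1\|_\infty,\|g_2\|_\infty): \text{steers }u_0\text{ to }0\text{ at time }T\}$, the time-to-norm function $V$ is non-increasing and, by a standard continuity/concatenation argument that exploits the continuity in $T$ of the HUM bound $e^{N/T^{1/3}}$, one obtains the equivalence $V(T^M_2)=M$: otherwise one would produce an element of $\mathcal U_2^M$ steering $u_0$ to zero at a time strictly less than $T^M_2$, contradicting its optimality.

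I would then Hahn--Banach-dualize the minimum-norm problem. Since the norm on $\mathbb R^2$ dual to $\max\{|a|,|b|\}$ is $|a|+|b|$, and integration by parts in \eqref{E: unprobelamjo} against solutions $\varphi$ of the backward dual equation
\begin{equation*}
-\partial_t\varphi+\Delta^2\varphi=0\ \text{in}\ \Omega\times(0,T^M_2),\qquad \varphi=\partial_\nu\varphi=0\ \text{on}\ \partial\Omega\times(0,T^M_2),
\end{equation*}
yields the identity $\langle u_0,\varphi(0)\rangle=\int_0^{T^M_2}\!\int_\Gamma g_1\tfrac{\partial\Delta\varphi}{\partial\nu}-g_2\Delta\varphi\,d\sigma dt$, Fenchel--Rockafellar duality gives
\begin{equation*}
M=V(T^M_2)=\sup\Big\{\langle u_0,\varphi(0)\rangle:\int_{\Gamma\times(0,T^M_2)}\!\!\big(|\tfrac{\partial\Delta\varphi}{\partial\nu}|+|\Delta\varphi|\big)\,d\sigma dt\le 1\Big\}.
\end{equation*}
Theorem~\ref{6106} supplies the coercivity needed for this supremum to be attained at some $\varphi^*\not\equiv 0$, and the primal-dual extremality conditions then force
\begin{equation*}
g_1^*(x,t)=M\sgn\!\big(\tfrac{\partial\Delta\varphi^*}{\partial\nu}(x,t)\big),\qquad g_2^*(x,t)=-M\sgn\!\big(\Delta\varphi^*(x,t)\big),
\end{equation*}
wherever the corresponding boundary trace of $\varphi^*$ is non-zero.

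The main and final obstacle is to show that the exceptional set
\begin{equation*}
Z=\big\{(x,t)\in\Gamma\times(0,T^M_2):\tfrac{\partial\Delta\varphi^*}{\partial\nu}(x,t)=\Delta\varphi^*(x,t)=0\big\}
\end{equation*}
has zero surface measure. If $|Z|>0$, applying Theorem~\ref{6106} to the time-reversal $w(x,t)=\varphi^*(x,T^M_2-t)$ with measurable observation set $\widetilde Z=\{(x,T^M_2-t):(x,t)\in Z\}$ makes the right-hand side of the observability inequality vanish, yielding $\|\varphi^*(0)\|_{L^2(\Omega)}=\|w(T^M_2)\|_{L^2(\Omega)}=0$. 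Backward uniqueness for the biharmonic parabolic operator $\partial_t+\Delta^2$ under the clamped lateral conditions then forces $w\equiv 0$, hence $\varphi^*\equiv 0$, contradicting $\langle u_0,\varphi^*(0)\rangle=M>0$. Therefore $|Z|=0$, and the extremality relations above imply $\max\{|g_1^*(x,t)|,|g_2^*(x,t)|\}=M$ for a.e.\ $(x,t)\in\Gamma\times(0,T^M_2)$, which is the weak bang-bang property of Corollary~\ref{C: colotati}.
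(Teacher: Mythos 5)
Your proposal departs from the paper's route. The paper (Section \ref{S:5}) spends most of the proof setting up the transposition solution to \eqref{E: unprobelamjo2} and verifying that the Hahn--Banach duality argument based on \eqref{E: firatobservanbi} yields bounded controls $g_1,g_2$; it then says ``The proof of Corollary \ref{C: colotati} is now standard,'' referring to the perturbation argument of \cite[\S 5]{AEWZ}. That argument never constructs or invokes a dual extremizer: one assumes the time-optimal pair $(g_1^*,g_2^*)$ violates the bang-bang property on a set of positive measure, picks a sub-window $(\tau,T^M_2)$ where this failure set still has positive measure, and uses the null-controllability over measurable sets implied by Theorem~\ref{6106} to add a small corrector control supported on that set, producing an admissible control that steers to rest strictly before $T^M_2$, contradicting optimality.

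Your Fenchel--Rockafellar route has a genuine gap: the existence of the dual maximizer $\varphi^*$. The identity
\begin{equation*}
M=\sup\Big\{\langle u_0,\varphi(0)\rangle:\int_{\Gamma\times(0,T^M_2)}\big(|\tfrac{\partial\Delta\varphi}{\partial\nu}|+|\Delta\varphi|\big)\,d\sigma dt\le 1\Big\}
\end{equation*}
is correct as a value identity, but the constraint set is a ball in an $L^1$-type observability seminorm, which has no weak compactness (that space is not reflexive), and the HUM observability inequality \eqref{E: firatobservanbi} bounds $\|\varphi(0)\|$ by the $L^1$ boundary integral --- the wrong direction for extracting a maximizing sequence. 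In the parabolic HUM framework the dual minimizer/maximizer in general lives only in a completion of $L^2(\Omega)$ under the observability norm, not in $L^2(\Omega)$ itself, and one cannot then read off boundary traces pointwise, so the extremality relations $g_1^*=M\,\sgn(\partial_\nu\Delta\varphi^*)$, $g_2^*=-M\,\sgn(\Delta\varphi^*)$ and the zero-set argument for $Z$ are not available without a substantial additional regularity argument that you have not supplied. The paper's perturbation argument avoids this obstruction entirely: it only consumes the observability inequality as an inequality and never needs a maximizer. I would also note a secondary point in your last paragraph: once you have $\|\varphi^*(0)\|_{L^2(\Omega)}=0$ from the observability inequality, the contradiction with $\langle u_0,\varphi^*(0)\rangle=M>0$ is immediate, so the appeal to backward uniqueness is redundant.
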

To carry out the technical details for Corollary \ref{C: colotati} we must  first solve \eqref{E: unprobelamjo} for $u_0$ in $L^2(\Omega)$ and with lateral Dirichlet data $g_i$, $i=1,2$, in $L^\infty(\partial\Omega\times (0,T))$. For this reason by the solution to
\begin{equation}\label{E: unprobelamjo2}
\begin{cases}
\p_t u+\Delta^2 u=0,\ &\text{in}\ \Omega\times(0,T),\\
u=g_1,\ \frac{\p u}{\p\nu}=g_2, &\text{on}\ \p\Omega\times(0,T),\\
u(0)=u_0,\ &\text{in}\ \Omega,
\end{cases}
\end{equation}
with $g_i$, $i=1,2$, in $L^2(\partial\Omega\times (0,T))$ and $u_0$ in $ C_0^\infty(\Omega)$, we mean the unique function $u$ over $\Omega\times (0,T)$ such that $v=u-e^{t(-\Delta^2)}u_0$ is the solution defined by transposition \cite[p. 209]{Lions1} to
\begin{equation}\label{tonteria}
\begin{cases}
\p_t v+\Delta^2 v=0,\ &\text{in}\ \Omega\times(0,T),\\
v=g_1,\ \frac{\p v}{\p\nu}=g_2, &\text{on}\ \p\Omega\times(0,T),\\
v(0)=0,\ &\text{in}\ \Omega,
\end{cases}
\end{equation}
i.e.; the unique $v$ in $L^2(\Omega\times (0,T))$ verifying
\begin{equation*}
\int_{\Omega\times (0,T)}v\left(-	 \partial_t\varphi+\Delta^2\varphi\right)\,dxdt=\int_{\partial\Omega\times (0,T)}g_1\tfrac{\partial\Delta\varphi}{\partial\nu}-g_2\,\Delta\varphi\,d\s dt,
\end{equation*}
for all $\varphi$ in $C^\infty(\overline\Omega\times [0,T])$, with $\varphi(T)\equiv 0$ in $\Omega$ and $\varphi=\nabla\varphi=0$ in $\partial\Omega\times (0,T)$. One can make sense of $v$ because for each $h$ in $C^\infty(\overline\Omega\times [0,T))$ there is a unique $\varphi$ in $C^\infty(\overline\Omega\times [0,T])$ verifying
\begin{equation*}\label{E: ecuaciadjunta}
\begin{cases}
-\partial_t\varphi+\Delta^2\varphi= h,\ &\text{in}\ \Omega\times (0,T),\\
\varphi=\frac{\partial\varphi}{\partial\nu}=0,\ &\text{in}\ \partial\Omega\times (0,T),\\
\varphi(T)=0,\ &\text{in}\ \Omega,
\end{cases}
\end{equation*}
and
\begin{equation*}
\|\varphi\|_{L^\infty((0,T), L^2(\Omega))\cap L^2((0,T), H^4(\Omega)\cap H^2_0(\Omega))}\le N\|h\|_{L^2(\Omega\times (0,T))},
\end{equation*}
with $N=N(\Omega,T)$ \cite[p. 140, Theorem 10.2]{Friedman1}. The above estimate on $\varphi$, standard traces inequalities \cite[p. 258]{Evans1} and duality imply the bound
\begin{equation}\label{E: acotac}
\|v\|_{L^2(\Omega\times (0,T))}\le N\left[\|g_1\|_{L^2(\partial\Omega\times (0,T))}+\|g_2\|_{L^2(\partial\Omega\times (0,T))}\right],
\end{equation}
with $N$ as above. For given $T>0$, $u_0$ in $L^2(\Omega)$ and $\mathcal J\subset\partial\Omega\times (0,T)$, a measurable set with positive measure, we may assume that $\mathcal J\subset \Omega\times (0,T-2\delta)$ for some  small $0<\delta<T/2$. Then, the existence of  two bounded boundary control functions $g_i$, $i=1,2$, verifying
\begin{equation*}
\|g_1\|_{L^\infty(\mathcal J)} + \|g_2\|_{L^\infty(\mathcal J)}\le N\|u_0\|_{L^2(\Omega)},
\end{equation*}
with $N$ the constant in \eqref{E: firatobservanbi} for the new set $\mathcal J$ and
such that the solution $u$ to
\begin{equation}\label{E: conroloed}
\begin{cases}
\p_t u+\Delta^2 u=0,\ &\text{in}\ \Omega\times(0,T),\\
u=g_1 \chi_{\mathcal J},\;\; \frac{\p u}{\p\nu}=g_2 \chi_{\mathcal J}, &\text{on}\ \p\Omega\times(0,T),\\
u(0)=u_0,\ &\text{in}\ \Omega.
\end{cases}
\end{equation}
verifies $u(T)\equiv 0$, can be proved by means of a standard  duality argument (Hahn Banach Theorem) based on the observability inequality \eqref{E: firatobservanbi} \cite[Corollary 1]{AEWZ} with the purpose to obtain the existence of two functions $g_i$ in $L^\infty(\mathcal J)$, $i=1,2$, verifying
\begin{equation}\label{E: controles}
\int_{\Omega}u_0\varphi(0)\,dx +\int_{\mathcal J}g_1\tfrac{\partial\Delta\varphi}{\partial\nu}-g_2\,\Delta \varphi\,d\s dt=0,
\end{equation}
for all $\varphi_T$ in $C_0^\infty(\Omega)$ and with $\varphi(t)=e^{\left(t-T\right)\Delta^2}\varphi_T$. Recall that the unique weak solution $v$ to \eqref{tonteria} is in fact in $C^\infty(\overline\Omega\times [0, +\infty))$, when $g_i$  are both in $C_0^\infty(\partial\Omega\times [0,+\infty))$; and that there is $N=N(\Omega,\delta)$ such that the estimate
\begin{equation*}
\|v\|_{C^{2,1}(\Omega\times [T-\frac\delta 2, T])}\le N\|v\|_{L^2(\Omega\times (0,T))},
\end{equation*}
holds when $\text{supp}(g_i)\subset \partial\Omega\times [0,T-\delta]$, $i=1,2$ \cite[p.141]{Friedman1}. The latter and \eqref{E: acotac} yield the bound
\begin{equation}\label{E: acotac2}
\|v\|_{C^{2,1}(\Omega\times[T-\frac\delta 2, T])}\le N(\Omega, T,\delta)\left[\|g_1\|_{L^2(\partial\Omega\times (0,T))}+\|g_2\|_{L^2(\partial\Omega\times (0,T))}\right],
\end{equation}
when $\text{supp}(g_i)\subset \partial\Omega\times [0,T-\delta]$, $i=1,2$. Finally, letting $u^\e$ denote the $C^\infty(\overline\Omega\times [0,+\infty))$ solution to \eqref{E: unprobelamjo2}, when $u_0$ and $g_i$ are replaced respectively by $u_0^\e$ and $g_i^\e$, with $u_0^\e$ in $C_0^\infty(\Omega)$, $g_i^\e$ in $C_0^\infty(\partial\Omega\times (0,T-\delta))$ for $i=1,2$, and in such a way that $u_0^\e$ converges to $u_0$ in $L^2(\Omega)$ and $g_i^\e$ converges to $g_i\chi_{\mathcal J}$ in $L^2(\partial\Omega\times (0,T-\delta))$, with
\begin{equation*}
\|g_i^\e\|_{L^\infty(\partial\Omega\times [0,T-\delta])}\le 2\|g_i\|_{L^\infty(\mathcal J)},\ \text{for}\  i=1,2,
\end{equation*}
integration by parts shows that
\begin{equation*}
\int_\Omega u^\e(T)\varphi_T\,dx =\int_\Omega u^\e_0\varphi(0)\,dx+\int_{\mathcal J}g_1^\e\,\tfrac{\partial\Delta\varphi}{\partial\nu}-g_2^\e\,\Delta \varphi\,d\s dt,
\end{equation*}
when $\varphi=e^{\left(t-T\right)\Delta^2}\varphi_T$, $\varphi_T$ is in $C^\infty_0(\Omega)$. Letting then $\e\to 0^+$ together with \eqref{E: acotac2} and \eqref{E: controles} show that the solution $u$ to \eqref{E: conroloed} verifies $u\equiv 0$ for $t\ge T$. The proof of Corollary \ref{C: colotati} is now standard.

Theorem \ref{nball} implies the null controllability of the system \eqref{anas} with controls restricted over $\ell$ different
non-empty open sets (or  measurable sets of positive measure): assume that $\omega_j\subset\Omega$, $j=1,\dots,\ell$, are
non-empty open sets verifying, $\omega_j\cap\omega_k=\emptyset$, for $1\leq j\neq k\leq \ell$.  Consider the system
\begin{equation}\label{6151}
\begin{cases}
\partial_t\mathbf u-\mathbf L^*\mathbf u=\mathbf f,
 &\text{in}\ \Omega\times(0,T),\\
\mathbf{u}=0,\ &\text{on}\ \partial\Omega\times(0,T),\\
\mathbf{u}(0)=\mathbf{u}_0,\ &\text{in}\ \Omega,
\end{cases}\quad\quad \text{with}\ \mathbf f=\left(\chi_{\omega_1}f_1,\dots, \chi_{\omega_l}f_l\right),
\end{equation}
$f_\xi$ in $L^\infty(\Omega\times(0,T))$, $\xi=1,\dots,\ell$, are the
controls, $\mathbf{u}_0$ in $L^2(\Omega)^\ell$ and $\mathbf L$ and its coefficients are as in \eqref{anas}. Then, from Theorem \ref{nball} and the classical duality argument (cf., e.g., \cite{DRu} or \cite[Corollary 1]{AEWZ}) we have
\begin{corollary}
For each $T>0$ and $\mathbf{u}_0$ in $L^2(\Omega)^\ell$, there are
bounded controls  $\mathbf{f}=(f_1,\dots,f_\ell)$, with
$$\|\mathbf{f}\|_{L^\infty(\Omega\times(0,T))}\leq N\|
\mathbf{u}_0\|_{L^2(\Omega)^\ell},$$
such that the solution $\mathbf{u}(\cdot\;;\mathbf{u}_0,\mathbf{f})$ to \eqref{6151} verifies, $\mathbf{u}(T;\mathbf{u}_0,\mathbf{f})=0$. Here, the constant $N=N(T,\Omega,\omega_1,\dots,\omega_\ell)$ is independent of
$\mathbf{u}_0$.
\end{corollary}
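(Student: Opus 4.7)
The plan is to deduce the corollary from Theorem \ref{nball} by the standard Hahn--Banach (HUM) duality argument of \cite[Corollary 1]{AEWZ}. First I would identify the adjoint of \eqref{6151}: for $\mathbf{w}_T$ in $L^2(\Omega)^\ell$, the backward adjoint $\mathbf{w}$ satisfies $-\partial_t\mathbf{w}-\mathbf{L}\mathbf{w}=0$ in $\Omega\times(0,T)$, $\mathbf{w}=0$ on $\partial\Omega\times(0,T)$, and $\mathbf{w}(T)=\mathbf{w}_T$. Under the change of variable $t\mapsto T-t$ this is exactly an instance of \eqref{anas}, so Theorem \ref{nball} applied with $E=(0,T)$ and $\omega_0=\min_{1\le\eta\le\ell}|\omega_\eta|$ gives
\begin{equation*}
\|\mathbf{w}(0)\|_{L^2(\Omega)^\ell}\le N\int_0^T\sum_{\eta=1}^\ell\|w^\eta(t)\|_{L^1(\omega_\eta)}\,dt,
\end{equation*}
with $N=N(T,\Omega,|\omega_1|,\dots,|\omega_\ell|,\delta)$.

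Next, on the linear subspace $\mathcal{V}$ of $X=\prod_{\eta=1}^\ell L^1(\omega_\eta\times(0,T))$ consisting of the tuples of restrictions $(w^1,\dots,w^\ell)$ arising this way from $\mathbf{w}_T\in L^2(\Omega)^\ell$, I would define the linear functional
\begin{equation*}
\Lambda(w^1,\dots,w^\ell)=-\int_\Omega\mathbf{u}_0\cdot\mathbf{w}(0)\,dx.
\end{equation*}
The observability inequality above shows that $\Lambda$ is well defined on $\mathcal{V}$ (the map $\mathbf{w}_T\mapsto$ trace tuple is injective) and that
\begin{equation*}
|\Lambda(w^1,\dots,w^\ell)|\le \|\mathbf{u}_0\|_{L^2(\Omega)^\ell}\|\mathbf{w}(0)\|_{L^2(\Omega)^\ell}\le N\|\mathbf{u}_0\|_{L^2(\Omega)^\ell}\sum_{\eta=1}^\ell\|w^\eta\|_{L^1(\omega_\eta\times(0,T))}.
\end{equation*}
Hahn--Banach extends $\Lambda$ to a continuous functional on all of $X$ with the same norm, and the duality $(L^1)^*=L^\infty$ produces $f_\eta\in L^\infty(\omega_\eta\times(0,T))$, $\eta=1,\dots,\ell$, with $\max_\eta\|f_\eta\|_{L^\infty}\le N\|\mathbf{u}_0\|_{L^2(\Omega)^\ell}$, such that
\begin{equation*}
\Lambda(w^1,\dots,w^\ell)=\sum_{\eta=1}^\ell\int_0^T\int_{\omega_\eta}f_\eta w^\eta\,dxdt.
\end{equation*}

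Finally I would check that the controls $\mathbf{f}=(\chi_{\omega_1}f_1,\dots,\chi_{\omega_\ell}f_\ell)$ steer $\mathbf{u}_0$ to zero at time $T$. Testing \eqref{6151} against the adjoint $\mathbf{w}$ and integrating by parts in space-time gives the identity
\begin{equation*}
\int_\Omega\mathbf{u}(T)\cdot\mathbf{w}_T\,dx-\int_\Omega\mathbf{u}_0\cdot\mathbf{w}(0)\,dx=\sum_{\eta=1}^\ell\int_0^T\int_{\omega_\eta}f_\eta w^\eta\,dxdt,
\end{equation*}
which together with the definition of $\Lambda$ forces $\int_\Omega\mathbf{u}(T;\mathbf{u}_0,\mathbf{f})\cdot\mathbf{w}_T\,dx=0$ for every $\mathbf{w}_T\in L^2(\Omega)^\ell$, and hence $\mathbf{u}(T;\mathbf{u}_0,\mathbf{f})=0$. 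There is no serious obstacle beyond correctly identifying the adjoint system and justifying the integration by parts for $L^\infty$ source data, both of which are standard; note also that Theorem \ref{nball} does not require the $\omega_\eta$ to be disjoint, so the disjointness hypothesis in the setup preceding \eqref{6151} is actually not used in the proof.
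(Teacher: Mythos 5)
Your proposal is correct and follows precisely the standard HUM/Hahn--Banach duality argument that the paper merely invokes by reference to \cite{DRu} and \cite[Corollary 1]{AEWZ}: identify the adjoint $-\partial_t\mathbf{w}-\mathbf{L}\mathbf{w}=0$, time-reverse it to an instance of \eqref{anas}, apply Theorem~\ref{nball} with $E=(0,T)$ to get the observability inequality, extend the resulting linear functional by Hahn--Banach on $\prod_\eta L^1(\omega_\eta\times(0,T))$, identify the extension with an $L^\infty$ tuple, and verify via the duality identity that the control steers the state to zero. Your closing observation that the disjointness of the $\omega_\eta$ stated before \eqref{6151} plays no role in the argument is also accurate.
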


Finally, Theorem~\ref{bangbang-onecontrol} implies the bang-bang
property of the time optimal controls for some systems of two parabolic equations with only one control force. For this connection we refer the readers to
\cite{ABDK}, \cite{WL} and the references therein: let
$T>0$ and $\Omega$ be a as above. Suppose that $a(\cdot)$, $b(\cdot)$, $c(\cdot)$ and $d(\cdot)$ are real-analytic in $\overline{\Omega}$ and $b(\cdot)\not\equiv0$. Let $\omega\subset \Omega$ be a non-empty open set (or a measurable set with positive measure).
Consider the controlled parabolic system
\begin{equation}\label{925c1}
\begin{cases}
\partial_t u-\Delta u+a(x)u+b(x)v=0,\ &\text{in}\;\;\Omega\times(0,+\infty),\\
\partial_t v-\Delta v+c(x)u+d(x)v=\chi_{\omega}f,\ &\text{in}\;\;\Omega\times(0,+\infty),\\
u=0,\;\;v=0,\;\;&\text{on}\;\;\partial\Omega\times(0,+\infty),\\
u(\cdot,0)=u_0,\;\;v(\cdot,0)=v_0,\;\;&\text{in}\;\;\Omega,
\end{cases}
\end{equation}
where $f$ is a control force taken in the constraint set
\begin{equation*}
\mathcal{U}_3^M\triangleq\Big\{f:\Omega\times\mathbb{R}^+\rightarrow
\mathbb{R} \;\;\text{measurable} : |f(x,t)|\leq M,
\;\;\text{a.e. in}\ \Omega\times\mathbb{R}^+
\Big\},
\end{equation*}
with $M>0$. For each $(u_0,v_0)$ in $L^2(\Omega)\times L^2(\Omega)\setminus\{(0,0)\}$, we study the time optimal control problem
$$
(TP)_3^M:\;\;\;\;\;\;T^M_3\triangleq \inf_{\mathcal{U}^M_3}
\big\{t>0;\;\big(u(t;u_0,v_0,f),v(t;u_0,v_0,f)\big)=(0,0)\big\},
$$
where $\big(u(\cdot\;;u_0,v_0,f),v(\;;,u_0,v_0,f)\big)$ is the solution to \eqref{925c1} corresponding to the control $f$ and the initial datum $(u_0,v_0)$. Then, the
methods in \cite[\S 5]{AEWZ} and Theorem \ref{bangbang-onecontrol} give the following consequence.
\begin{corollary}
The problem $(TP)_3^M$ has the bang-bang property: any time optimal control $f$ satisfies, $|f(x,t)|=M$ for a.e. $(x,t)$ in $\omega\times (0, T_3^M)$. Moreover, it is unique.
\end{corollary}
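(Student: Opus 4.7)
My plan is to mirror the bang-bang argument sketched above for $(TP)_1^M$, substituting Theorem~\ref{bangbang-onecontrol} for Theorem~\ref{6131} and using the space-time real analyticity provided by Lemma~\ref{L: 6086} to compensate for the fact that the control now acts on only one component of \eqref{925c1}.

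The first step will be to record that, by the standard HUM duality argument applied to Theorem~\ref{bangbang-onecontrol}, for every $T>0$ and every $(u_0,v_0)\in L^2(\Omega)^2$ there is a control $f\in L^\infty(\omega\times(0,T))$ of norm at most $N(\Omega,\omega,T,\delta)(\|u_0\|_{L^2}+\|v_0\|_{L^2})$ steering \eqref{925c1} to zero at time $T$; in particular $T^M_3<+\infty$. A weak-$\ast$ compactness argument in $\mathcal U^M_3$, together with the continuity of the input-to-state map, then produces a time optimal control $f^\ast$. Because the origin lies on the boundary of the closed convex reachable set at time $T^M_3$, a Hahn--Banach separation yields a nontrivial pair $(\varphi_T,\psi_T)\in L^2(\Omega)^2$ whose backward adjoint $(\varphi,\psi)$, solution to
\begin{equation*}
\begin{cases}
-\p_t\varphi-\Delta\varphi+a\varphi+c\psi=0,\quad -\p_t\psi-\Delta\psi+b\varphi+d\psi=0,&\text{in }\Omega\times(0,T^M_3),\\
\varphi=\psi=0,&\text{on }\p\Omega\times(0,T^M_3),\\
\varphi(T^M_3)=\varphi_T,\ \psi(T^M_3)=\psi_T,&\text{in }\Omega,
\end{cases}
\end{equation*}
satisfies the maximality condition $f^\ast(x,t)\,\psi(x,t)=\min_{|s|\le M}s\,\psi(x,t)$ a.e.\ in $\omega\times(0,T^M_3)$; hence $f^\ast=-M\,\sgn\psi$ on the set where $\psi\neq 0$. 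The bang-bang claim therefore reduces to showing $\psi\neq 0$ a.e.\ in $\omega\times(0,T^M_3)$.

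This is the step where the analyticity machinery enters, and it will be the main obstacle. Under the time reversal $s=T^M_3-t$, the adjoint becomes a forward system of the same structural form as \eqref{heats2} with the coefficient quadruple $(a,b,c,d)$ replaced by $(d,b,c,a)$; in particular $b$ is unchanged, so the hypothesis $b\not\equiv 0$ persists and Lemma~\ref{L: 6086} applies to give space-time real analyticity of $(\varphi,\psi)$ on $\overline\Omega\times(0,T^M_3)$. Arguing by contradiction, if $\psi$ vanished on some $\mathcal E\subset\omega\times(0,T^M_3)$ of positive Lebesgue measure, then because $\psi$ is real-analytic on the connected open set $\Omega\times(0,T^M_3)$ it would have to vanish identically there (a real-analytic function on a connected domain whose zero set has positive Lebesgue measure is identically zero). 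The second adjoint equation would then reduce to $b(x)\varphi(x,t)\equiv 0$, and since the analytic function $b$ is nonzero on some open ball $B\subset\Omega$ one would obtain $\varphi\equiv 0$ on $B\times(0,T^M_3)$; a second use of analyticity propagates this to all of $\Omega\times(0,T^M_3)$, contradicting $(\varphi_T,\psi_T)\neq 0$.

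Uniqueness will then follow from a short convexity argument: if $f_1,f_2\in\mathcal U^M_3$ are both time optimal, linearity makes $\tfrac12(f_1+f_2)$ admissible and time optimal, so the bang-bang conclusion applied to this average gives $|f_1+f_2|=2M$ a.e.\ in $\omega\times(0,T^M_3)$, which combined with $|f_i|\le M$ forces $f_1=f_2$ a.e.\ there.
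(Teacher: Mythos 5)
Your proposal takes a genuinely different route from the paper. The paper derives the corollary by citing the machinery of \cite[\S 5]{AEWZ}: one assumes the optimal control is not saturated on a set $S\subset\omega\times(0,T_3^M)$ of positive measure, uses Theorem~\ref{bangbang-onecontrol} (via duality) to get a bounded null control supported on a shifted copy of $S$, and perturbs the time-shifted optimal control by this small null control to steer to zero strictly before $T_3^M$ — contradicting minimality. That argument never constructs a Pontryagin multiplier. You instead go through the classical separation/maximum-principle route: produce a nontrivial adjoint pair $(\varphi_T,\psi_T)$, obtain $f^*=\pm M\,\sgn\psi$ where $\psi\neq 0$, and then use the space-time analyticity of the adjoint (from Lemma~\ref{L: 6086}) plus $b\not\equiv 0$ to show $\psi\neq 0$ a.e.\ in $\omega\times(0,T_3^M)$. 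That analyticity/unique-continuation step is correct and nicely executed, and your convexity argument for uniqueness is standard and correct.

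There is, however, a genuine gap in your separation step. You write that because $0$ lies on the boundary of the closed convex reachable set at time $T_3^M$, Hahn--Banach yields a \emph{nontrivial} pair $(\varphi_T,\psi_T)$. That is not automatic here: because the control-to-state map is compactifying, the reachable set with $L^\infty$-bounded controls is compact in $L^2(\Omega)^2$, hence has empty interior, and a boundary point of a compact convex set in an infinite-dimensional Banach space need not admit a supporting hyperplane (Bishop--Phelps gives density of support points, not all of them). Establishing a nonzero multiplier for time-optimal parabolic problems is a theorem in its own right (Fattorini-type arguments, or passage to the limit from strict separation of $0$ from $R(T)$ with $T<T_3^M$, together with a compactness/normality argument that typically again invokes controllability), and your proposal should either supply that argument or cite it; as written it is asserted without justification. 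This is precisely the difficulty the AEWZ route sidesteps.

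A small slip that does not affect the argument: after time reversal the adjoint system has coefficient matrix transposed, i.e.\ quadruple $(a,c,b,d)$ rather than your $(d,b,c,a)$. This is harmless because Lemma~\ref{L: 6086}'s analyticity conclusion only needs analytic coefficients, not $b\not\equiv 0$, and the fact you actually use — that $b$ multiplies $\varphi$ in the equation for $\psi$ — remains true.
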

\noindent {\it Acknowledgement}: The authors wish to thank Professor Gengsheng Wang for his suggestions during the writing of this work.

\end{document}